\documentclass[oneside,a4paper,english, 11pt]{amsart}

\input{artmymacros.sty}

\title{Functions on Irreducible Components of the Emerton-Gee Stack}
\author{Eivind Otto Hjelle}
\address{Department of Mathematics,
Northwestern University, 
2033 Sheridan Road, 
Evanston, Illinois 60208, USA}
\email{eohjelle@gmail.com}

\author{Louis Jaburi}
\address{Department of Mathematics,
Imperial College London, 
London SW7 2AZ,
UK}
\email{l.jaburi20@imperial.ac.uk}

\author{Rachel Knak}
\address{Department of Mathematics,
University of Arizona,
617 N Santa Rita Ave, 
Tucson, AZ 85721, USA}
\email{rachelknak@math.arizona.edu}

\author{Hao Lee}
\address{Department of Mathematics,
University of Chicago, 
5734 South University Avenue, 
Chicago, Illinois 60637, USA}
\email{haolee@math.uchicago.edu}

\author{Wang Shenrong}
\address{Department of Mathematics,
Johns Hopkins University,
3400 North Charles Street,
Baltimore, Maryland, 21210, USA}
\email{swang219@jhu.edu}

\begin{document}

\begin{abstract}
    Let $K / \mathbb{Q}_p$ be a finite unramified extension, with $k$ being the residue field of its ring of integers. Let $\mathcal{X}_n$ denote the Emerton-Gee stack parametrizing \'etale $(\varphi,\Gamma_K)$-modules of rank $n$.
    It is known since the work of Emerton-Gee \cite{emerton-gee} that the irreducible components of the reduced special fiber of $\mathcal{X}_n$ are labeled by Serre weights $\sigma$ of $\GL_n(k)$.
    If such a component is denoted $\mathcal{X}(\sigma)$, we prove that $\mathcal{O}(\mathcal{X}(\sigma)) \cong \mathbb{F}[x_1,x_2,\dots,x_{n-1},x_n^{\pm 1}]$ when $\sigma$ is sufficiently generic.
\end{abstract}

\maketitle

\tableofcontents

\section{Introduction}

The aim of this paper is to compute the global sections of certain  irreducible components of the reduced Emerton-Gee stack. 
Recall that given an unramified extension $K = \Q_{p^f}$ of $\Q_p$ with residue field $k=\bF_{p^f}$ and a sufficiently large finite extension $E/\Q_p$, with ring of integers $\cO$, uniformizer $\varpi$ and residue field $\bF$, we can construct the Emerton-Gee stack $\cX_n$ over $\Spf(\cO)$ parameterizing \'etale $(\varphi,\Gamma_K)$-modules of rank $n$ \cite{emerton-gee}. 
The relation between  \'etale $(\varphi,\Gamma_K)$-modules and representations of the local Galois group $G_K$ tells us that the $\cO$ (resp. $\bF$)-points of $\cX_n$ is the groupoid of Galois representations $\{\rho: G_K \to \GL_n(\cO)\}$ (resp.  the groupoid of Galois representations $\{\bar \rho: G_K \to \GL_n(\bF)\}$). 

According to \cite[Theorem 5.5.12]{emerton-gee} the reduced substack $\cX_{n,\text{red}}/\bF$ admits a decomposition 
$$\cX_{n,\red,\bF}=  \bigcup_{\sigma} \cX(\sigma) $$
into irreducible components, which are indexed by the Serre weights $\sigma$ of $\GL_n(\cO_K)$. Our main theorem is as follows:
\begin{repthm}{main-theorem}
If the Serre weight $\sigma$ is sufficiently generic, then
\begin{equation}
\label{main-equation}
\cO(\cX(\sigma))\cong \F[x_1,x_2,\dots,x_{n-1},x_n^{\pm 1}] .
\end{equation}
\end{repthm}
\begin{rmk}
    More precisely, the theorem applies for $\sigma$ which is $(3n-1)$-deep (defined in Section \ref{sec:lowest-alcove-presentations}).
    This is when Theorem \ref{presentation-of-EG-component} applies.
\end{rmk}

Let us provide some context for this theorem.
A conjecture of Emerton-Gee-Hellmann \cite{emerton-gee-hellmann} predicts the existence of a fully faithful functor of the form
\begin{align*}
       \mathfrak{A} : \begin{Bmatrix}
     \textrm{Some category of smooth } \\\GL_n(K)\textrm{-representations}
    \end{Bmatrix} \longrightarrow
    \begin{Bmatrix} \textrm{Some category of quasi-coherent} \\ \textrm{sheaves on } \cX_n \end{Bmatrix}
\end{align*}
where we intentionally remain vague on the precise formulation. The former category should contain $\cInd^{\GL_n(K)}_{\GL_n(\cO_K)}(\sigma)$ as an object, corresponding to a sheaf $M_\sigma = \mathfrak{A}(\cInd^{\GL_n(K)}_{\GL_n(\cO_K)}(\sigma))$ on the other side. 
For $\sigma$ in the Fontaine-Laffaille range, the sheaf $M_\sigma$ is expected to be a line bundle supported on $\cX(\sigma)$, in which case the functor $\mathfrak{A}$ produces an isomorphism
$$ \End(\cInd_{\GL_n(\cO_K)}^{\GL_n(K)} \sigma) \cong \End(M_\sigma) \cong \cO(\cX(\sigma)) . $$
We warn the reader that $M_\sigma$ is not expected to be a line bundle in general.
Nevertheless, Theorem \ref{main-theorem} shows that there is still such an isomorphism.

A proof of Theorem \ref{main-theorem} for the special case $n=2$ can be found in \cite[Section 7.6.4]{emerton-gee-hellmann}.
Their proof relies on a description of $\cX(\sigma)$ as a quotient stack\footnote{This presentation as a quotient stack is actually for the version with fixed determinant.}
$$ \cX(\sigma) \cong \left[ \prod_{i=0}^{f-1} (\bA^2 - \lbrace (0,0) \rbrace) / \bG_m^f \right] , $$
with respect to a certain action of $\bG_m^f$ by shifted conjugation.
Using this description, the ring of global functions can be computed ``by hand''.
In the more general setting of our theorem, we use a similar description of $\cX(\sigma)$ coming from \cite{localmodels}.

Let $T$ be the diagonal torus of $\GL_n$. We also prove a result about functions on certain $T$-torsors of affine Schubert varieties that may be of independent interest.
This is Theorem \ref{important}, which is crucially used in our proof of Theorem \ref{main-theorem}.

\subsection{Interpretation of results}\label{section:interpretation}
The Satake isomorphism in characteristic $p$ \cite{herzig-satake} shows that the Hecke algebra 
$$\cH(\sigma) \defeq \End_{\GL_n(K)}(\cInd^{\GL_n(K)}_{\GL_n(\cO_K)}\sigma) \cong \F[T_1,T_2,\dots,T_{n-1},T_n^{\pm 1}]$$ 
is isomorphic to a polynomial ring as in Equation (\ref{main-equation}). 
Therefore, Theorem \ref{main-theorem} as stated shows that there exists an isomorphism 
\begin{equation}
    \label{eq:hecke-iso}
    \cO(\cX(\sigma)) \cong \cH(\sigma) . 
\end{equation}
This is just an abstract isomorphism of polynomial algebras, and it is not at all obvious how to construct a ``meaningful'' map in either direction.

Let us briefly sketch how it should be possible to characterize the functions $x_i \in \cO(\cX(\sigma))$ as certain renormalized Hecke operators on the ordinary locus of $\cX(\sigma)$; additional details will be provided in Section \ref{section:hecke-operators}.
The irreducible component $\cX(\sigma)$ embeds into the special fiber of $\cX^{\eta,\tau}$ for a suitable choice of inertial type $\tau$, and the rigid generic fiber admits a map to a the analytification of a moduli stack $\operatorname{WD}_\tau$ of Weil-Deligne representations.
\[\begin{tikzcd}
	{\mathcal{X}^{\eta,\tau,\text{rig}}_E} & {\mathcal{X}^{\eta,\tau}} & {\mathcal{X}^{\eta,\tau}_{\text{red},\mathbb{F}}} & {\mathcal{X}(\sigma)} \\
	{\operatorname{WD}_\tau^{\text{an}}}
	\arrow[hook', from=1-3, to=1-2]
	\arrow[hook, from=1-1, to=1-2]
	\arrow[hook', from=1-4, to=1-3]
	\arrow[from=1-1, to=2-1]
\end{tikzcd}\]
The upshot is that the Hecke algebra $\cH(\sigma(\tau))$ describe functions on the Weil-Deligne stack $\operatorname{WD}_\tau$, which therefore pull back to functions on $\cX^{\eta,\tau,\text{rig}}_E$.
Suitable renormalizations of the standard operators $T_i \in \cH(\sigma(\tau))$ extend to functions on a substack of $\cX^{\eta,\tau}$, and by restriction to the special fiber these agree with the $x_i$'s on the ordinary locus of $\cX(\sigma)$.

\subsection{Outline of proof}
Let us explain in more detail the idea of the proof and the reason for the genericity assumption. Assume for simplicity that $K= \Q_p$ and thus $f=1$.

As we will recall in more detail in Section \ref{section:step0}, \cite[Theorem 7.4.2]{localmodels} implies, under the conditions in loc. cit., that $\cX(\sigma)$ admits a surjection 
$$[\widetilde{\cC}_\sigma / T] \twoheadrightarrow\cX(\sigma)$$ 
where $\widetilde{\cC}_\sigma$ is certain closed subspace of $\widetilde{\Fl}= I_1\setminus LG$, a $T$-torsor of the affine flag variety $\Fl$. 
Now, again by \cite[Theorem 7.4.2]{localmodels}, if the genericity assumption is fulfilled, then the above surjection is an isomorphism. 
Therefore
$$ \cO(\cX(\sigma))\cong \cO(\widetilde{C}_\sigma)^{T} $$
and it is the latter object that we will be computing.
This computation relies on various open charts of $\tld{\cC}_\sigma$ that will be introduced in Section \ref{section:open-charts}.
The identification of $\cO(\tld{\cC}_\sigma)^T$ proceeds in three steps:
\begin{enumerate}
    \item We first exhibit an open $\tld{U}_{s_w} \hookrightarrow \widetilde{\cC}_\sigma$ and compute $\cO(\tld{U}_{s_w})^{T}\cong \bF[x_1^{\pm 1},...,x_n^{\pm 1}]$. We will also compute $T$-invariant sections on some closely related auxiliary charts. This is done in sections \ref{Simple upper bound} and \ref{section:auxiliary-charts}.
    \item We consider a larger open $\tld{U}_{s_w} \hookrightarrow \tld{U} \hookrightarrow \widetilde{\cC}_\sigma$ and compute via the first step that $\cO(\tld{U})^T\hookrightarrow\cO(\tld{U}_{s_w})^{T}$ is precisely $\bF[x_1,..., x_{n-1}, x_n^{\pm 1}]\hookrightarrow \bF[x_1^{\pm 1},...,x_n^{\pm 1}]$. This is Section \ref{section:step2}. Note that we will exhibit the functions $x_i$ as certain minors here.
    \item By the last step we have $\cO(\widetilde{\cC}_\sigma)^{T} \hookrightarrow \cO(\tld{U})^{T}\cong \bF[x_1,..., x_{n-1}, x_n^{\pm 1}]$ and we have to show that this injection is in fact an isomorphism, i.e. we show that each $T$-invariant function on $\tld{U}$ can be extended to a $T$-invariant function on $\widetilde{\cC}_\sigma$. This is Section \ref{section:step3}, which relies on Demazure resolutions that we study in Appendix \ref{demazure-resolutions}. The core arguments about extending functions are actually in Section \ref{Extending functions}.
\end{enumerate}
In Sections \ref{section:upper-bounds} and \ref{section:step3} as well as Appendix \ref{demazure-resolutions}, we actually keep the assumption that $K = \bQ_p$ to simplify the main arguments.
The extension to the general case is done in Section \ref{section:general-case}.
This is largely just a matter of extra bookkeeping, and reducing to the case $K = \bQ_p$ where possible.

\subsection{Notation}\label{NotationEG}
Throughout the paper, we fix a prime $p$. $K$ denotes a fixed finite unramified extension of $\Q_p$, with ring of integers $\cO_K$. Fix an algebraic closure $\ovl K$ of $K$. We use $G_K$ to denote the Galois group $\Gal(\ovl K/K)$.
The ``coefficient field'' $E$ is a finite extension of $\Q_p$ with residue field $\F$.
We will assume throughout that $E$ is sufficiently large.

Let $G$ denote a split connected reductive group (over some ring) together with a Borel $B$, a maximal split torus $T \subset B$. Let $\Phi^{+} \subset \Phi$ (resp. $\Phi^{\vee, +} \subset \Phi^{\vee}$) denote the subset of positive roots (resp.~positive coroots) in the set of roots (resp.~coroots) for $(G, B, T)$. 
Let $\Delta$ (resp.~$\Delta^{\vee}$) be the set of simple roots (resp.~coroots).
Let $X^*(T)$ be the group of characters of $T$ and $\Lambda_R \subset X^*(T)$ denote the root lattice for $G$.

Set $X^0(T)$ to be the subgroup consisting of characters $\lambda\in X^*(T)$ such that $\langle\lambda,\alpha^\vee\rangle=0$ for all $\alpha\in \Delta$.

Let  $W(G)$ denote the Weyl group of $(G,T)$.  Let $\textrm{w}_0$ denote the longest element of $W(G)$.
We sometimes write $W$ for $W(G)$ when there is no chance for confusion. 
Let $W_a$ (resp.~$\tld{W}$) denote the affine Weyl group and extended affine Weyl group 
\[
W_a = \Lambda_R \rtimes W(G), \quad \tld{W} = X^*(T) \rtimes W(G)
\]
for $G$.
We use $t_{\nu} \in \tld{W}$ to denote the image of $\nu \in X^*(T)$. 

Recall that the affine Weyl group of $\SL_n$ is a Coxeter group generated by $\{s_{\alpha_{i(i+1)}}, s_a|1\leq i\leq n-1\}$, where the former are the spherical reflections and $s_a$ is the affine reflection. We fix lifts of these in $L\SL_n$, the loop group of $\SL_n$ (see also \ref{Affine flag varieties}), as follows:
 \[ s_{\alpha_{i(i+1)}} = \begin{pmatrix}
1 & & & & & \\
& \ddots & & & & \\
& & 0 & - 1 & & \\
& & 1 & 0 & & \\
& & & & \ddots & \\
& & & & & 1
\end{pmatrix}, \,\,\,\,\,
s_a = \begin{pmatrix}
0 & & & & v \\
& 1 & & & \\
& & \ddots & & \\
& & & 1 & \\
- v^{-1} & & & & 0
\end{pmatrix}
\]
and by abuse of notation denote them by the same expression.

Later on we will choose an element $\tld{z}$ with a reduced expression $s_1\cdots s_m$. We fix the lift of $\tld{z}$ which is the product of the lifts of $s_1\cdots s_m$.

Different choices of lifts do not affect double cosets like $I\tld{z}I$, where $I$ is the Iwahori subgroup of $L\GL_n$ as defined in \ref{Affine flag varieties}. But in constructing global functions like $f:I\tld{z}I\to \bA_1$ we need to fix such a lift. 

\subsection{Acknowledgment}
We would like to thank Bao Le Hung for suggesting the problem to us and for providing invaluable guidance during the project.
We would also like to thank Yulia Kotelnikova and David Marcil for their contributions in early stages of the project.
We are grateful to Ana Caraiani and Brandon Levin for valuable comments on the first draft of the paper.
We are also grateful to Dat Pham for finding an error in the proof of Theorem \ref{important} in the first version of the paper available on arXiv, and for suggesting a correction to the same proof.
Also, we would like to thank Fred Diamond and Toby Gee for several comments and suggestions in the first version of the paper.
Finally, we would like to thank the anonymous referees for their careful reading of the manuscript, and for numerous helpful suggestions. 

This work was born out of the NSF-FRG Collaborative Grant DMS-$1952556$. L. J. was supported by the Royal Society Research Grant for Research Fellows 180025. 
\section{Irreducible components of the Emerton-Gee stack}
\label{section:step0}
Let $\mathcal{X}_n = \mathcal{X}_{K,n}$ denote the Emerton-Gee stack of rank $n$ \'etale $(\varphi,\Gamma)$-modules, as defined in \cite[Definition 3.2.1]{emerton-gee}.
Its underlying reduced substack $\mathcal{X}_{n,\text{red}}$ is an algebraic stack of finite type over $\mathbb{F}$, and the irreducible components of $(\mathcal{X}_{n,\text{red}})_{\F}$ are labeled by the Serre weights of $\GL_n(\mathcal{O}_K)$ \cite[Theorem 5.5.12, Theorem 6.5.1]{emerton-gee}.
Given a Serre weight $\sigma$, let $\cX(\sigma) \subseteq (\mathcal{X}_{n,\text{red}})_{\bF}$ be the irreducible component labeled by $\sigma$.
The starting point for our computations in later sections is a description of $\mathcal{X}(\sigma)$ as a quotient of an explicit closed subscheme of an affine flag variety over $\F$, given by \cite[Theorem 7.4.2]{localmodels}.
We will now introduce some notation and recall this description of $\mathcal{X}(\sigma)$.
The reader may treat this as a guide to the relevant sections of \cite{localmodels}.

In what follows, all constructions having to do with Serre weights and related combinatorics happen on the ``representation theoretic side'', whereas the flag varieties we consider are on the ``Galois side''.
The same holds throughout the paper, with the notable exception of Appendix \ref{demazure-resolutions}.
When relating data on one side to data on the other side, we will sometimes use the anti-involution $* : \GL_n \to \GL_n$ which takes a matrix to its transpose, and has a corresponding interpretation on Weyl groups \cite[Definition 2.1.9]{localmodels}.
Morally, the appearance of this anti-involution can be explained by the fact that the flag varieties in \cite{localmodels} model \'etale $\varphi$-modules for the \emph{contravariant} versions of Fontaine functors in $p$-adic Hodge theory.

\subsection{Serre weights}
\label{section:serre-weights} 

This section follows \cite[Section 9]{gee-herzig-savitt} and \cite[Section 2.2]{localmodels}.

A \textit{Serre weight} for $\GL_n(\mathcal{O}_K)$ is an irreducible representation $\sigma: \GL_n(k) \to \GL_{n'}(\F)$. By assumption the field $E$ is sufficiently large, so all irreducible representations of $\GL_n(k)$ over $\ovl{\F_p}$ are defined over $\F$.
Serre weights are parameterized by their highest weight vectors as we will see shortly. 

In order to classify Serre weights we will need some auxiliary groups.
Let $G \defeq \Res_{\F_p}^{k} \GL_n$ with maximal torus $T_G \defeq \Res_{\F_p}^k T$ and Borel $B_G \defeq \Res_{\F_p}^k B$.
In addition, let $\underline{G} \defeq G_{\F} \cong \prod_{k \hookrightarrow \F} \GL_n$, $\underline{T} \defeq (T_G)_{\F}$ and $\underline{B} \defeq (B_G)_{\F}$. Then we can also define the finite Weyl group $\underline{W} \defeq N_{\underline{G}}(\underline{T})/\underline{T}$ . 
We let $\underline{\widetilde{W}} \defeq X^*(\underline{T}) \rtimes \underline{W} \cong \prod_{\iota : \F_p \hookrightarrow k} \widetilde{W}$ denote the extended affine Weyl group for $(\underline{G},\underline{T})$.
Let $\underline{\Phi}, \underline{\Phi}^+, \underline{\Delta}$ denote the roots, positive roots, and simple roots, respectively. 

Let $\mu \in X^*(\underline{T})$ be a dominant character which is $p$-restricted, i.e. it lies in $X_1(\underline{T}) = \lbrace \nu \in X^*(\underline{T}) | 0 \leq \langle \nu, \alpha^\vee \rangle \leq p-1 \text{ for all }\alpha \in \Delta \rbrace$. We let $F(\mu) = \soc \Ind_{\underline{B}}^{\underline{G}} \textrm{w}_0 \mu$ denote the irreducible algebraic representation of $\underline{G}$ with highest weight $\mu$ (see also \cite[Section II.2]{jantzen} and \cite[Corollary 3.17]{herzig09}).
Then $F(\mu)|_{G(\F_p)} = F(\mu)|_{\GL_n(k)}$ is irreducible and thus is a Serre weight. 
In fact, every Serre weight is of this form, but we can give a more precise statement.
The map
\begin{align*}
\frac{X_1(\underline{T})}{(p-\pi)X^0(\underline{T})} & \to \lbrace \text{Serre weights of }\GL_n(k) \rbrace \\
\mu & \mapsto F(\mu)|_{\GL_n(k)}
\end{align*}
is a bijection, where the $\pi(\nu)_\iota = \nu_{\iota \circ \varphi^{-1}}$ with respect to the decomposition $X^*(\underline{T}) \cong \prod_{\iota : \F_p \hookrightarrow k} X^*(T_k)$, $\varphi(x) = x^p$ being the absolute Frobenius \cite[Lemma 9.2.4]{gee-herzig-savitt} \cite[Appendix, Proposition 1.3]{herzig09}.

We say that a Serre weight $\sigma = F(\mu)$, where $\mu \in X_1(\underline{T})$, is \textit{$m$-deep} if $\mu$ is $m$-deep in its $p$-alcove in the sense of \cite[Definition 2.1.10]{localmodels}. Note that this is well-defined because translations by elements of $X^0(\underline{T})$ preserve depth.

\subsubsection{Lowest alcove presentations}
\label{sec:lowest-alcove-presentations}
Let $\underline{\widetilde{W}} \defeq X^*(\underline{T}) \rtimes \underline{W} \cong \prod_{\iota : \F_p \hookrightarrow k} \widetilde{W}$ denotes the extended affine Weyl group for $(\underline{G},\underline{T})$.
Let $\eta \in  X^*(\underline{T})$ denote an element lifting the half sum of positive roots of $\PGL_n$, and let $$ \underline{C}_0 \defeq \lbrace \lambda \in X^*(\underline{T}) \otimes_{\Z} \R | 0 < \langle \lambda + \eta , \alpha^\vee \rangle < p \text{ for all }\alpha \in \underline{\Phi}^+ \rbrace $$
denote the dominant base $p$-alcove. The $p$-dot action of $\underline{\widetilde{W}}$ on $X^*(T) \otimes_{\Z}\R$ is given by
$$ \widetilde{w} \cdot \lambda = w(\lambda + \eta + p \nu) - \eta , \,\,\,\,\, \text{ for }\widetilde{w} = w t_\nu \in \underline{\widetilde{W}} \text{ and } \lambda \in X^*(\underline{T}) \otimes_\Z \R .$$
Finally, let 
\[\underline{\widetilde{W}}_1^+ = \lbrace \widetilde{w} \in \underline{\widetilde{W}} | \widetilde{w} \cdot \underline{C}_0 \text{ is }p\text{-restricted} \rbrace = \lbrace \widetilde{w} \in \underline{\widetilde{W}} | 0 \leq \langle \widetilde{w} \cdot \lambda, \alpha^\vee \rangle < p-1 \text{ for all }(\lambda,\alpha) \in \underline{C}_0 \times \Delta \rbrace .\]

For $\omega - \eta \in \underline{C}_0 \cap X^*(\underline{T})$ and $\widetilde{w} \in \underline{\widetilde{W}}_1^+$, define
$$ F_{(\widetilde{w},\omega)} = F(\pi^{-1} (\widetilde{w}) \cdot (\omega - \eta)), $$
where $\pi: \underline{\widetilde{W}} \to  \underline{\widetilde{W}}$ is the natural extension of the automorphism $\pi$ on $X^*(\underline{T}) $ defined before.
Then $F_{(\bullet,\bullet)}$ factors through the equivalence relation $(\widetilde{w},\omega) \sim (t_\nu \widetilde{w}, \omega - \nu)$, $\nu \in X^0(\underline{T})$.
The equivalence class of $(\widetilde{w},\omega)$ is called a \textit{lowest alcove presentation} of $F_{(\widetilde{w},\omega)}$.
\begin{rmk}
Note that since the $p$-dot action preserves depth, the following are equivalent \cite[Definition 2.1.10]{localmodels}:
\begin{enumerate}
\item $\sigma$ is $m$-deep;
\item $\omega - \eta$ is $m$-deep;
\item $\omega$ is $m$-generic.
\end{enumerate}\end{rmk}
Intuitively, a lowest alcove presentation of $\sigma = F(\mu)$ keeps track of the alcove containing $\mu$ (which by definition is the alcove containing $\sigma$).
Because the affine Weyl group $\underline{W}_a = \langle \underline{\Phi}\rangle \rtimes \underline{W}$ acts simply transitively  on the set of alcoves, there exists a unique\footnote{As long as $\mu$ is 0-deep.} pair $(\tld{w},\omega - \eta) \in \underline{W}_a \times \underline{C}_0$ such that $\pi^{-1}(\tld{w}) \cdot (\omega - \eta) = \mu$.
\begin{convention}
\label{serre-weight-convention}
For $\sigma$ a (0-deep) Serre weight, we always work with the (unique) lowest alcove presentation $(\tld{w},\omega)$ where $\tld{w} \in \underline{W}_a$.
\end{convention}
\begin{rmk}
In the terminology of \cite[Section 2.2]{localmodels}, this convention amounts to saying that we always choose a lowest alcove presentation compatible with the trivial central character $\zeta = 0$. Note that by \cite[Lemma 2.2.4]{localmodels} this does not impose any constraints on the Serre weight.

\end{rmk}
It will be convenient to have the following definition.
\begin{defn} \label{*}
    Given a Serre weight $\sigma$ with lowest alcove presentation $(\tld{w},\omega)$, let $\tld{z} \defeq \textrm{w}_0 \tld{w}$.
    Consequently, 
    $$\tld{z}^* = (\textrm{w}_0 \tld{w})^* = \tld{w}^* \textrm{w}_0 = w^{-1} t_\nu \textrm{w}_0 = t_{w^{-1} \nu} w^{-1} \textrm{w}_0 , $$
    where $*$ denotes the anti-involution of \cite[Definition 2.1.9]{localmodels} that was mentioned in the beginning of this section.
\end{defn}
\subsection{Affine flag varieties}\label{Affine flag varieties}
Our affine flag varieties are defined in terms of the following functors on $\mathbb{F}$-algebras.
Some of them will not appear until Appendix \ref{demazure-resolutions}, but for convenience we list them all here. Let $R$ be an $\mathbb F$-algebra.
\begin{align*}
L \GL_n (R) & \defeq \GL_n(R((v))) & \text{( loop group)} \\
 L^+ \GL_n (R) & \defeq \cK= \GL_n(R[[v]]) & \text{( positive loop group)} \\
I(R) & \defeq \lbrace A \in L^+ \GL_n(R) | A \text{ is upper triangular mod } v \rbrace & \text{( Iwahori)} \\
\bar I(R) & \defeq \lbrace A \in L^+ \GL_n(R) | A \text{ is lower triangular mod } v  \rbrace & \text{( opposite Iwahori)} \\
I_1(R) & \defeq \lbrace A \in L^+ \GL_n(R) | A \text{ is unipotent upper triangular mod } v \rbrace & \text{(the pro-}v\text{ Iwahori)} \\
\bar I_1(R) & \defeq \lbrace A \in L^+ \GL_n(R) | A \text{ is unipotent lower triangular mod }v \rbrace & \text{(the pro-}v\text{ opposite Iwahori)} \\
T(R) & \defeq \lbrace \text{diagonal matrices in } \GL_n(R) \rbrace & \text{( constant torus)} \\
\Lie I(R) & \defeq \lbrace A \in \Mat_{n \times n}(R[[v]]) | A \text{ is upper triangular mod }v \rbrace & \text{( Lie algebra of }I\text{)} .
\end{align*}
The affine flag variety $\Fl$ is defined as the fpqc quotient sheaf $\Fl \defeq I \backslash L \GL_n$. Similarly, we define $\widetilde{\Fl} \defeq I_1 \backslash L \GL_n$.
Since $I = I_1 \rtimes T$ it follows that $\widetilde{\Fl} \to {\Fl}$ is a $T$-torsor, where $T$ acts by left translation.

Let $\mathcal{J} \defeq \Hom_{\F_p}(k,\F)$, and define $\Fl_\mathcal{J} \defeq \prod_{\iota \in \mathcal{J}} \Fl$ and $\widetilde{\Fl}_{\mathcal{J}} \defeq \prod_{\iota \in \mathcal{J}} \widetilde{\Fl}$.
Now $\widetilde{\Fl}_{\mathcal{J}} \to \Fl_{\mathcal{J}}$ is a left $T^{\mathcal{J}}$-torsor, where $T^{\mathcal{J}}$ acts by left translation in each factor.
\begin{rmk}
    Affine flag varieties are not varieties. In fact, they are ind-schemes.  They are the affine version of flag varieties, as we replace $G$ by its loop group $LG$ and the Borel subgroup by the Iwahori subgroup. 
\end{rmk}

\subsubsection{Monodromy condition}
Let $\nabla_0$ denote the following monodromy condition $A \in L\GL_n(R)$ \cite[Section 4.3]{localmodels}:
$$ v \left( \frac{d}{dv} A \right) A^{-1} \in \frac{1}{v} \Lie I(R) . $$
Let $(L \GL_n)^{\nabla_0} (R) = \lbrace A \in L \GL_n (R) | A \text{ satisfies the }\nabla_0\text{-condition.} \rbrace$.
This condition is preserved under left translation by $I(R)$, so the quotient $I(R) \backslash L \GL_n^{\nabla_0}(R)$ is well defined.
Let $\Fl^{\nabla_0}$ denote the fpqc-sheafification of the functor $R \mapsto I(R) \backslash L \GL_n^{\nabla_0}(R)$.
We also define $\widetilde{\Fl}^{\nabla_0}$ as the preimage of $\Fl^{\nabla_0}$ in $\widetilde{\Fl}$. This is equivalently the fpqc-sheafification of the functor $R \mapsto I_1(R) \backslash L \GL_n^{\nabla_0}(R)$.

\subsubsection{Schubert varieties}
Given $\widetilde{w},\widetilde{s} \in \underline{\widetilde{W}} = \prod_{\iota \in \cJ} \widetilde{W}$, we make the following definitions: 
\begin{enumerate}
\item $S^\circ (\widetilde{w}) = \prod_{\iota \in \cJ} I \backslash I \tld{w}_\iota I \subset \Fl_\cJ$ is the \textit{open Schubert cell} associated to $\widetilde{w}$. This is the fpqc sheafification of the functor $R \mapsto \prod_{\iota \in \cJ} I(R) \backslash I(R) \widetilde{w}_\iota I(R)$ on $\F$-algebras $R$. 
Similarly, let $\tld{S}^\circ(\tld{w}) = \prod_{\iota \in \cJ} I_1 \backslash I_1 \tld{w}_\iota I \subset \tld{\Fl}_\cJ$.
\item Let $S^\circ(\tld{w},\tld{s}) = \prod_{\iota \in \cJ} I \backslash I \tld{w}_\iota I \tld{s}_\iota \subset \Fl_\cJ$ and $\tld{S}^\circ(\tld{w},\tld{s}) = \prod_{\iota \in \cJ} I_1 \backslash I_1 \tld{w}_\iota I \tld{s}_\iota \subset \tld{\Fl}_\cJ$ be translates of the open Schubert cells from before.
\item The Schubert variety $S(\widetilde{w})$ is the closure of $S^\circ(\widetilde{w})$ in $\Fl_\cJ$, and $\tld{S}(\tld{w})$ is the closure of $\tld{S}^\circ(\tld{w})$ in $\tld{\Fl}_\cJ$. Similarly, $S(\tld{w},\tld{s})$ is the closure of $S^\circ(\tld{w},\tld{s})$ in $\Fl_\cJ$, and $\tld{S}(\tld{w},\tld{s})$ is the closure of $\tld{S}^\circ(\tld{w},\tld{s})$ in $\tld{\Fl}_\cJ$.
\item Whenever we put a superscript $\nabla_0$ it means the locus satisfying the monodromy condition. For example, $S(\tld{w},\tld{s})^{\nabla_0} = S(\tld{w},\tld{s}) \cap \Fl_{\cJ}^{\nabla_0}$.
\item Define $S^{\nabla_0}(\tld{w},\tld{s})$ as the closure of $S^\circ(\tld{w},\tld{s})^{\nabla_0}$ in $\Fl^{\nabla_0}_\cJ$. Similarly, define $\tld{S}^{\nabla_0}(\tld{w},\tld{s})$ as the closure of $\tld{S}^\circ(\tld{w},\tld{s})^{\nabla_0}$ in $\tld{\Fl}^{\nabla_0}_\cJ$.
\end{enumerate}
\begin{rmk}
 The notation introduced here differs from that of \cite[Definition 4.3.2]{localmodels}. For example, what would be called $S^{\circ}(\tld{w}_1,\tld{w}_2,\tld{s})$ in loc. cit. corresponds to $S^{\circ}((\tld{w}_2^{-1} \textrm{w}_0 \tld{w}_1)^*,\tld{s}^*)$ in our notation.
\end{rmk}

\begin{exam}\label{schubert-cell-discrepancy}
    $S^{\nabla_0}(\widetilde{w},\widetilde{s})$ is typically smaller than $S(\tld{w},\tld{s})^{\nabla_0}$ \cite[Remark 4.3.3(2)]{localmodels}.
    Let us give an example.
   Let $G=\GL_{2,\Q_p}$ and consider the element $\widetilde{w}=\begin{pmatrix}
       0 & -v^{-1} \\
      v & 0
   \end{pmatrix} =s_{12}\cdot s_a \cdot s_{12}$ (cf. \ref{NotationEG}). Then one can compute that $S^\circ (\widetilde{w})\cong \bA^3$ and the monodromy condition cuts out a
   two-dimensional subspace $S^\circ(\tld{w})^{\nabla_0}\cong \bA^2\subset \bA^3$. The closure of the Schubert variety is given by $S(\widetilde{w})= \coprod_{ \tld{w}' \leq \tld{w}} S^\circ (\tld{w})$. 
   More precisely the elements $\tld{w}'\leq \tld{w}$, where we use the Bruhat order, are as follows
\[\begin{tikzcd}
   & {s_{12}\cdot s_a} & {s_{12}} \\
   {s_{12}\cdot s_a\cdot s_{12}} &&& 1 \\
   & {s_a\cdot s_{12}} & {s_a}
   \arrow[from=1-2, to=2-1]
   \arrow[from=3-2, to=2-1]
   \arrow[from=1-3, to=1-2]
   \arrow[from=1-3, to=3-2]
   \arrow[from=3-3, to=3-2]
   \arrow[from=3-3, to=1-2]
   \arrow[from=2-4, to=1-3]
   \arrow[from=2-4, to=3-3]
\end{tikzcd}.\]
Now one can compute that for $\tld{w}'<\tld{w}$ we have $S^\circ (\tld{w}')^{\nabla_0}=S^\circ (\tld{w}')\cong \bA^{l(\tld{w}')}$, i.e. the monodromy condition doesn't affect the open Schubert cell.
This is because, up to left multiplication by $I$, the entries in the matrices have degree $1$ or lower (we refer to Example \ref{toy_case} to see how one would exactly compute the monodromy condition). To summarize, we have
\begin{enumerate}
   \item $S(\tld{w},\tld{s})^{\nabla_0}= \coprod_{ \tld{w}' \leq \tld{w}} S^\circ (\tld{w})^{\nabla_0}\cong \bA^{2} \coprod (\bA^{2} \coprod \bA^{2}) \coprod (\bA^{1} \coprod \bA^{1}) \coprod \bA^{0} $.
   \item $S^{\nabla_0}(\widetilde{w},\widetilde{s})$ is the closure of $S^\circ(\tld{w})^{\nabla_0}\cong \bA^{2}$ inside $S(\tld{w},\tld{s})^{\nabla_0}$. Now $$\bA^{2}\subset \bA^{2} \coprod (\bA^{2} \coprod \bA^{2}) \coprod (\bA^{1} \coprod \bA^{1}) \coprod \bA^{0}$$ cannot be dense because of the dimensions of the substrata (see eg. \cite[Tag 0BCQ]{stacks-project}), so we see that the closure is strictly smaller.
\end{enumerate}
\end{exam}

\subsubsection{Shifted conjugation}
In addition to left translation, there is another action of $T^{\cJ}$ on $\widetilde{\Fl}^{\cJ}$ that will be important to us, namely the \textit{shifted conjugation action} \cite[Section 5.4]{localmodels}. This is induced by the following action of $T^\cJ$ on $L\GL_n^\cJ = \prod_{\iota \in \cJ} L \GL_n$: Given $D = (D_\iota)_{\iota \in \cJ} \in T^\cJ(R)$ and $A = (A_\iota)_{\iota \in \cJ} \in L\GL_n^{\cJ}(R)$, the shifted conjugation action is given by
$$ (D \cdot A)_\iota = D_\iota A_\iota D_{\iota \circ \varphi}^{-1} .$$
Note that if we choose an embedding $\iota_0 : k \hookrightarrow \F$ and order the embeddings as $(\iota_0,\iota_1,\dots, \iota_{r-1}) = (\iota_0,\iota_0 \circ \varphi^{-1},\dots,\iota_0 \circ \varphi^{-f+1})$, then we can write the shifted conjugation action as
$$ (D_0,D_1,\dots,D_{f-1}) \cdot (A_0,A_1,\dots,A_{f-1}) = (D_0 A_0 D_{f-1}^{-1},D_1 A_1 D_0^{-1},\dots,D_{f-1} A_{f-1} D_{f-2}^{-1}) ,$$
where we use the shorthand $D_i = D_{\iota_i}, A_i = A_{\iota_i}$.

Given $D = (D_\iota)_{\iota \in \cJ} \in T^\cJ(R)$ and $A = (A_\iota)_{\iota \in \cJ} \in L\GL_n^{\cJ}(R)$, we can see that $$v\frac{d(D_\iota A_\iota D^{-1}_{\iota \circ \phi})}{dv}(D_\iota A_\iota D_{\iota \circ \phi}^{-1})^{-1}=vD_\iota \frac{ dA_\iota }{dv}D^{-1}_{\iota \circ \phi}(D_\iota A_\iota D_{\iota \circ \phi}^{-1})^{-1}=D_\iota(v  \frac{dA_\iota}{dv} A_\iota^{-1} )D_\iota^{-1},$$ hence the shifted conjugation preserves the subvariety cut out by the monodromy condition.

Note that the open Schubert cells $\tld{S}^\circ(\tld{w},\tld{s})$ and the Schubert varieties $\tld{S}(\tld{w},\tld{s})$ are stable under the shifted conjugation action.
Indeed, suppose $\tld{w} = t_\nu w$ and $\tld{s} = s t_\mu$, where $t_\nu,t_\mu$ denote translations by $\nu,\mu$ respectively, and $w,s \in \underline{W}$.
Given $D =(D_\iota)_{\iota \in \cJ} \in T^\cJ(R)$ and $A = (A_\iota)_{\iota \in \cJ} \in \prod_{j \in \cJ}I_1 \widetilde{w}_\iota I \tld{s}_\iota(R)$, we can choose the coset representative $A_\iota$ to have the form $\widetilde{w}_\iota \alpha_\iota \tld{s}_\iota$ with $\alpha_\iota\in I(R)$. Then $D_\iota A_\iota D_{\iota\circ \phi}^{-1}=D_\iota \widetilde{w}_\iota  \alpha_\iota \tld{s}_\iota D_{\iota\circ \phi}^{-1}=\widetilde{w}_\iota \Ad_{\tld{w}_\iota^{-1}}(D_\iota) \alpha_\iota \Ad_{\tld{s}_\iota}(D_{\iota\cdot \phi}^{-1}) \tld{s}_\iota \in I_1 \widetilde{w}_\iota I \tld{s}(R)$ since $\Ad_{\tld{w}_\iota^{-1}}(D_\iota) = \Ad_{w_\iota^{-1}}(D_\iota), \Ad_{\tld{s}}(D_{\iota \circ \varphi}^{-1}) = \Ad_{s_\iota}(D_{\iota \circ \varphi}^{-1}) \in T(R) \subseteq I(R)$.

\subsection{Presentation of the irreducible components}\label{presirred}
\label{section:presentation-of-the-irreducible-components}
Let $\sigma$ be a Serre weight for $\GL_n(\cO_K)$ which is at least $(n-1)$-deep and with lowest alcove presentation $(\tld{w},\omega) = (t_\nu w, \omega)$.
Recall from Convention \ref{serre-weight-convention} that $\tld{w}$ is an element of the affine Weyl group $\underline{W}_a$.
In addition, recall from Definition \ref{*} that $\tld{z} = \textrm{w}_0 \tld{w}$ and therefore $\tld{z}^* = \tld{w}^* \textrm{w}_0 = w^{-1} t_\nu \textrm{w}_0$.

Following \cite[Sections 4.4-4.6]{localmodels} we make the following definition.
\begin{defn}
Let $\sigma$ be as above. We define the following the subschemes
\begin{align*}
\mathcal{C}_\sigma & \defeq S^{\nabla_0}(\tld{z}^*,t_\omega) \subset \Fl^{\nabla_0}_\cJ \\ 
\widetilde{\cC}_\sigma & \defeq \tld{S}^{\nabla_0}(\tld{z}^*,t_\omega) = \text{the preimage of }\cC_\sigma\text{ in }\widetilde{\Fl}^{\nabla_0}_\cJ .
\end{align*}
\end{defn}
\begin{rmk}
\label{a-irreducible}
\begin{itemize}
    \item This does not depend on the choice of representative $(\widetilde{w},\omega)$. If we take another representative $(t_{\lambda}\widetilde{w},\omega-\lambda)$ where $\lambda\in X^0(\underline{T})$, then the equality
     $$ I \backslash I \tld{z}_\iota^* I v^{\omega_\iota} = I \backslash I \tld{z}_\iota^* I v^{\lambda_{\iota}} v^{-\lambda_{\iota}} v^{\omega_\iota}
     = I \backslash I w^{-1} t_\lambda v^{\lambda_{\iota}}\textrm{w}_0 I  v^{\omega_\iota-\lambda_{\iota}}
     $$  
     yields the desired result. Here $v^{\omega}=\prod_{\iota\in \cJ} v^{\omega_{\iota}}$ is cocharacter which is dual to the character $\omega$. 
     In particular we used that the dual of $\lambda$ is central $v^{\lambda}$, since $\lambda\in X^0(\underline{T})$ (see also \cite[Section 3.3]{herzig09}).
    \item Both $\mathcal{C}_\sigma$ and $\widetilde{\cC}_\sigma$ are irreducible closed subschemes of dimension $(\# \cJ)d$ and $(\# \cJ)(n + d)$, respectively, where $d = \dim_\F(B \backslash \GL_n)_\F$ \cite[Section 4.5]{localmodels}.
\end{itemize}
\end{rmk}
Let us spell out in more detail what these objects are.
By definition, $\cC_\sigma = S^{\nabla_0}(\widetilde{z}^*,t_{\omega})$ is the closure of 
$$ U_1 \defeq S^\circ(\widetilde{z}^*, t_{\omega})^{\nabla_0} = \left( \prod_{\iota \in \cJ} I \backslash I \tld{z}_\iota^* I v^{\omega_\iota} \right) \cap \Fl^{\nabla_0}_{\cJ} $$
inside $\Fl^{\nabla_0}_\cJ$.
Similarly, $\widetilde{\cC}_\sigma = \tld{S}^{\nabla_0}(\tld{z}^*,t_\omega)$ is the closure of 
$$ \tld{U}_1 \defeq \tld{S}^\circ(\tld{z}^*,t_\omega)^{\nabla_0} = \left( \prod_{\iota \in \cJ} I_1 \backslash I_1 \tld{z}_\iota^* I v^{\omega_\iota} \right) \cap \tld{\Fl}^{\nabla_0}_{\cJ} $$
inside $\tld{\Fl}^{\nabla_0}_\cJ$.
\begin{thm}
\label{presentation-of-EG-component}
Assume that $\sigma$ is a $(3n-1)$-deep Serre weight. Then $$\cX(\sigma) \cong \left[ \widetilde{\mathcal{C}}_\sigma / T^\cJ \right],$$
where $T^\cJ$ acts by shifted conjugation.
\end{thm}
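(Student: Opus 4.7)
The plan is to obtain this theorem directly from \cite[Theorem 7.4.2]{localmodels}: the work reduces to checking that the $(3n-1)$-deep hypothesis is sufficient to invoke the isomorphism (not merely the surjection) version of that result, and to reconciling conventions between the two papers.

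First I would describe the morphism $[\widetilde{\cC}_\sigma/T] \to \cX(\sigma)$. The ambient $\widetilde{\Fl}^{\nabla_0}_\cJ$ parametrizes \'etale $\varphi$-modules equipped with a Breuil-Kisin lattice and a framing, and the monodromy condition $\nabla_0$ encodes the extra datum needed to descend to a $(\varphi,\Gamma_K)$-module. The Schubert-type closure $\widetilde{\cC}_\sigma = \widetilde{S}^{\nabla_0}(\widetilde{z}^*,t_\omega)$ cuts out those lattices whose $\varphi$-shape matches the Serre weight $\sigma$ through its lowest alcove presentation $(\widetilde{w},\omega)$, with the involution $*$ and the rewriting $\widetilde{z} = \textrm{w}_0\widetilde{w}$ encoding passage from the representation-theoretic side to the contravariant $\varphi$-module conventions used on the Galois side. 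Shifted conjugation by $T^\cJ$ is exactly the change-of-framing action, so the quotient maps to the stack of $(\varphi,\Gamma_K)$-modules and, through the inclusion into $(\cX_{n,\red})_\bF$, into the irreducible component $\cX(\sigma)$.

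Next I would verify surjectivity. By Remark \ref{a-irreducible}, $\widetilde{\cC}_\sigma$ is irreducible of dimension $(\#\cJ)(n+d)$, so the quotient $[\widetilde{\cC}_\sigma/T]$ is an irreducible algebraic stack of dimension $(\#\cJ)d$, matching the dimension of $\cX(\sigma)$ from \cite[Theorem 5.5.12]{emerton-gee}. The scheme-theoretic image of the morphism is therefore a closed irreducible substack of $\cX(\sigma)$ of full dimension, hence equals $\cX(\sigma)$. This step uses only the existence of the map together with the general theory of irreducible components of $\cX_{n,\red}$ developed in \cite{emerton-gee}, and does not yet use the genericity hypothesis.

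The main step, and the only place where the $(3n-1)$-deepness is needed, is to upgrade this surjection to an isomorphism; this is the content we would import from \cite[Theorem 7.4.2]{localmodels}. The heart of that result is a uniqueness statement: under sufficient genericity (our bound is comfortably within the range of loc.\ cit.), each \'etale $(\varphi,\Gamma_K)$-module parametrized by $\cX(\sigma)$ admits a Breuil-Kisin lattice of type $(\widetilde{z}^*,t_\omega)$ which is unique up to shifted conjugation by $T^\cJ$. Consequently the geometric fibers of the morphism are single $T^\cJ$-orbits, and the stabilizer in $T^\cJ$ of a point of $\widetilde{\cC}_\sigma$ matches its automorphism group as a point of $\cX(\sigma)$, so the map is an isomorphism of stacks. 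The main obstacle is precisely this uniqueness at the level of lattices; it is not reproved here, but cited from \cite{localmodels} and dictates the form of the genericity hypothesis in the statement.
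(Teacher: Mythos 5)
Your proposal is correct and takes essentially the same route as the paper: the paper's proof is literally a one-line citation to \cite[Remark 7.4.3(2)]{localmodels} (which repackages \cite[Theorem 7.4.2]{localmodels} in the relevant generality), and your reconstruction of the morphism, the dimension-based surjectivity, and the uniqueness-of-lattices step upgrading to an isomorphism is an accurate unpacking of what that reference provides.
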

\begin{proof}
This is \cite[Remark 7.4.3(2)]{localmodels}. 
\end{proof} 

\begin{rmk}
    The label $\sigma$ of the component $\cX(\sigma)$ is chosen as to be compatible with \cite[Page 115]{localmodels}.
    This is the component labeled by $\sigma^\vee \otimes \det^{n-1}$ according to the conventions of \cite{emerton-gee}.
\end{rmk}

\begin{cor}
\label{global-functions-as-invariant-sections}
With the same assumptions as above, the ring of global functions on $\cX(\sigma)$ is given by the $T^\cJ$-invariant sections $\cO(\tld{\cC}_\sigma)^{T^\cJ}$, where $T^\cJ$ acts by shifted conjugation.
\end{cor}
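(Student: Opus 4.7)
The plan is essentially a formal consequence of Theorem \ref{presentation-of-EG-component} together with the standard description of global functions on a quotient stack; the substantive content of the corollary has already been absorbed into the quotient presentation, and what remains is pure bookkeeping.

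First, I would appeal to Theorem \ref{presentation-of-EG-component} to rewrite $\cX(\sigma)$ as the quotient stack $[\tld{\cC}_\sigma / T^\cJ]$, where $T^\cJ$ acts by shifted conjugation. (The symbol $T$ appearing in the statement of Theorem \ref{presentation-of-EG-component} is understood as $T^\cJ$, consistent with the setup of Section \ref{section:presentation-of-the-irreducible-components} in which the affine flag variety is indexed by $\cJ$.) Under this identification, the problem reduces to computing $\cO([\tld{\cC}_\sigma / T^\cJ])$.

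Next, I would invoke the following general fact about quotient stacks: if $G$ is a flat affine group scheme over $\F$ acting on an algebraic stack $X$, then quasi-coherent sheaves on $[X/G]$ are equivalent to $G$-equivariant quasi-coherent sheaves on $X$, and global sections are computed as $G$-invariants. More concretely, the atlas $\pi : X \to [X/G]$ is smooth surjective with $\pi^\ast \cO_{[X/G]} = \cO_X$ (endowed with its canonical equivariant structure), and one identifies
\[
\cO\bigl([X/G]\bigr) = \mathrm{Eq}\bigl(a^\ast, \pr_2^\ast : \cO(X) \rightrightarrows \cO(G \times X)\bigr),
\]
where $a, \pr_2 : G \times X \rightrightarrows X$ are the action and second projection. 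By definition this equalizer is $\cO(X)^G$.

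Specializing this to $X = \tld{\cC}_\sigma$ and $G = T^\cJ$ with the shifted conjugation action yields the desired identification $\cO(\cX(\sigma)) \cong \cO(\tld{\cC}_\sigma)^{T^\cJ}$. I do not anticipate any real obstacle: the only thing to be slightly careful about is that $\tld{\cC}_\sigma$ is cut out of the infinite-dimensional ind-scheme $\tld{\Fl}^{\nabla_0}_\cJ$, but since $\tld{\cC}_\sigma$ is a finite-type closed subscheme (being a translate of a Schubert variety intersected with the monodromy locus) and $T^\cJ$ is a split torus of finite type, the general formalism applies without issue.
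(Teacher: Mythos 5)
Your proposal is correct and matches the paper's (implicit) reasoning: the corollary is stated without proof precisely because it is the immediate consequence of Theorem \ref{presentation-of-EG-component} together with the standard fact that $\cO([X/G]) = \cO(X)^G$ for a flat affine group scheme $G$ acting on a finite-type scheme $X$. Your spelling out of the equalizer description is a correct elaboration of what the paper leaves tacit.
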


\section{Open charts}
\label{section:open-charts}
In this section, we will introduce certain subschemes of $\tld{\cC}_\sigma$ that will be used to compute the $T^\cJ$-invariant sections.

As in Section \ref{section:presentation-of-the-irreducible-components}, $\sigma$ denotes an $(n-1)$-deep Serre weight with lowest alcove presentation $(\tld{w},\omega)$ and $\tld{z} \defeq \textrm{w}_0 \tld{w}$.
Let $\cK \defeq L^+\GL_n$.
\begin{defn} \label{Uchart}
For $s \in \underline{W}$, define the following subschemes
$$ \tld{U}_s \defeq \tld{S}^\circ(\tld{z}^*,s t_\omega)^{\nabla_0} = \prod_{\iota \in \cJ} I_1 \backslash I_1 \tld{z}_\iota^* I s_\iota v^{\omega_\iota} \cap \tld{\Fl}^{\nabla_0}_\cJ $$
and 
$$  \tld{U} \defeq \prod_{\iota \in \cJ} I_1 \backslash I_1 \tld{z}_\iota^* \cK v^{\omega_\iota} \cap \tld{\Fl}^{\nabla_0}_\cJ. $$
\end{defn}

For our purposes, the key properties of $\tld{U}_s$ are:
\begin{enumerate}[(A)]
\item The various $\tld{U}_s$ are open and dense subschemes of $\tld{\cC}_\sigma$;
\item The collection $\{\widetilde{U}_{s}\}_{s\in \underline{W}}$ is a Zariski open cover of $\widetilde{U}$;
\item For each $s \in \underline{W}$ the open $\tld{U}_s$ can be given natural coordinates in upper triangular matrices.
\end{enumerate}
Before proving these properties, let us first establish a consequence of them.
\begin{lemma}
\label{large-open}
The scheme $\tld{U}$ in $\tld{\Fl}^{\nabla_0}_\cJ$ is irreducible.
\end{lemma}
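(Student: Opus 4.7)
My plan is to realize $\tld{U}$ as a nonempty open subscheme of the irreducible scheme $\tld{\cC}_\sigma$, whereupon irreducibility follows automatically from the general fact that any nonempty open subscheme of an irreducible space is irreducible.

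The first step is to invoke Remark \ref{a-irreducible}, which asserts that $\tld{\cC}_\sigma$ is an irreducible closed subscheme of $\tld{\Fl}^{\nabla_0}_\cJ$. Next I would combine the two properties stated just before the lemma: property (A) says that each $\tld{U}_s$ is an open dense subscheme of $\tld{\cC}_\sigma$, while property (B) identifies $\tld{U}$ with the set-theoretic union $\bigcup_{s \in \underline{W}} \tld{U}_s$. Together these realize $\tld{U}$ as a union of opens inside $\tld{\cC}_\sigma$, so $\tld{U}$ is itself an open subscheme of $\tld{\cC}_\sigma$ (and in particular a subscheme of $\tld{\Fl}^{\nabla_0}_\cJ$). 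It is nonempty because any single $\tld{U}_s$ is dense, hence nonempty, in $\tld{\cC}_\sigma$.

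To conclude, I would apply the standard topological fact that any nonempty open subset of an irreducible space is irreducible: any two nonempty opens of $\tld{U}$ are, in particular, nonempty opens of $\tld{\cC}_\sigma$, and therefore intersect. Applied to $\tld{U} \subseteq \tld{\cC}_\sigma$, this yields the lemma.

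There is no real obstacle in the argument itself, since it is purely a formal topological consequence of properties (A) and (B) combined with the irreducibility of $\tld{\cC}_\sigma$. The genuine content lies in (A) and (B) — in particular in the assertion that the naively larger set $I_1 \backslash I_1 \tld{z}^* \cK v^\omega$ is, after imposing the monodromy condition, covered by the smaller pieces $\tld{U}_s$ — but these properties are taken as given at this point in the exposition and will be justified in the sections that follow.
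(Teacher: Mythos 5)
Your proposal is correct and follows essentially the same route as the paper: both use property (A) to see each $\tld{U}_s$ is open in $\tld{\cC}_\sigma$, property (B) to see these cover $\tld{U}$, and then invoke the irreducibility of $\tld{\cC}_\sigma$ from Remark \ref{a-irreducible} together with the fact that a nonempty open subset of an irreducible space is irreducible.
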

\begin{proof}
By property (A) we know that the $\tld{U}_s$ are open inside $\widetilde{\cC}_\sigma$. By property (B), the $\tld{U}_s$ cover $\tld{U}$. Therefore $\tld{U}$ is open inside $\widetilde{\cC}_\sigma$ and irreducible itself (see also Remark \ref{a-irreducible}).
\end{proof}
\begin{rmk}
Arguing as in \cite[Section 2]{alvarez}, it should be possible to prove Lemma \ref{large-open} directly.
\end{rmk}

\begin{lemma}[Property A]
\label{property-A}
For each $s \in \underline{W}$, we have $\tld{U}_s \subset \tld{\cC}_\sigma$, and the inclusion
$$\tld{U}_s \hookrightarrow \tld{\cC}_\sigma $$
is open.
\end{lemma}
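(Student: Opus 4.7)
My plan is to split the proof into two parts: the set-theoretic containment $\tld{U}_s \subseteq \tld{\cC}_\sigma$ and the openness of the inclusion.

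For the openness, once containment is known, I would argue by dimension and irreducibility. Since $\tld{S}^\circ(\tld{z}^*, sv^\omega)$ is locally closed in $\tld{\Fl}_\cJ$, its $\nabla_0$-intersection $\tld{U}_s$ is locally closed in $\tld{\Fl}^{\nabla_0}_\cJ$, and hence locally closed in $\tld{\cC}_\sigma$. Using the explicit product decomposition of open Schubert cells intersected with the $\nabla_0$-locus from \cite[Section 4.5]{localmodels}, one computes $\dim \tld{U}_s = \#\cJ \cdot (n+d)$, matching $\dim \tld{\cC}_\sigma$ from Remark \ref{a-irreducible}. Since $\tld{\cC}_\sigma$ is irreducible, a locally closed subscheme of the same dimension is automatically open.

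The containment $\tld{U}_s \subseteq \overline{\tld{U}_1}$ is the more delicate step. The subtlety is that in the ambient $\tld{\Fl}_\cJ$ (before the $\nabla_0$-condition), $\tld{U}_s$ for $s\ne 1$ lies in an open Schubert cell different from $\tld{U}_1$, and these cells have disjoint Schubert closures. Thus the containment really exploits the monodromy condition. The strategy I would follow is to use the explicit matrix coordinates from \cite[Section 4.5]{localmodels} to exhibit, for each $R$-point of $\tld{U}_s$, an $R[\![t]\!]$-family in $\tld{U}_1$ whose special fiber lies in $\tld{U}_s$ while satisfying the $\nabla_0$-condition throughout; existence of such families is controlled by the genericity of $\omega$. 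Alternatively, the containment becomes manifest from the Demazure resolution of $\tld{\cC}_\sigma$ developed in Appendix \ref{demazure-resolutions}: this resolution carries open charts indexed by $s \in \underline{W}$, each surjecting onto $\tld{U}_s$ and factoring through $\tld{\cC}_\sigma$.

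The main obstacle is therefore this containment step. The $\nabla_0$-condition is not $I$-bi-invariant and interacts nontrivially with the Bruhat stratification, so one cannot deduce the result from Bruhat order alone. Either a careful coordinate-by-coordinate analysis or the Demazure resolution of Appendix \ref{demazure-resolutions} is essential; in light of the role of the resolution later in the paper, the latter approach is the more natural one to adopt.
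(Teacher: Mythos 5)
Your openness argument is valid, though different from the paper: the paper instead notes that $\tld{S}^\circ(\tld z^*, sv^\omega)$ is open in the Schubert variety $\tld{S}(\tld z^*, sv^\omega)$, hence $\tld U_s$ is open in the closed subscheme $\tld{S}(\tld z^*, sv^\omega)^{\nabla_0}$ of $\tld{\Fl}_\cJ^{\nabla_0}$, which necessarily contains the closure $\tld{\cC}_\sigma$ of $\tld U_s$. Your dimension-count route works too, provided you check that $\tld U_s$ actually has full dimension $(\#\cJ)(n+d)$ for every $s$ (not just $s = 1$), which is itself most easily seen after you already know the closures agree.

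The containment is where your proposal has a genuine gap. You correctly identify it as the delicate step and correctly intuit that the $\nabla_0$-condition together with genericity of $\omega$ must be exploited, but neither of your two strategies closes the argument. The first ($R[\![t]\!]$-families degenerating from $\tld U_1$ to $\tld U_s$) is only a sketch of what such a proof would have to do; you do not produce the families nor explain why genericity guarantees their existence. The second strategy is incorrect: Appendix~\ref{demazure-resolutions} constructs Demazure resolutions $\tld D(x)$ of the Schubert varieties $\tld S(x)$ without any $\nabla_0$-condition, and nowhere produces a resolution of $\tld{\cC}_\sigma$ itself with charts surjecting onto the various $\tld U_s$. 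The appendix is used in the paper solely to extend functions across codimension-1 Schubert strata, not to control the $\nabla_0$-locus. The actual key fact that the paper invokes is \cite[Proposition 4.3.6]{localmodels}, which asserts directly that the closure of $\tld S^\circ(\tld z^*, sv^\omega)^{\nabla_0}$ inside $\tld{\Fl}_\cJ^{\nabla_0}$ is independent of $s$ under the genericity hypothesis; without that (or a proof of its content), your argument is incomplete.
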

\textit{Proof.}
By \cite[Proposition 4.3.6]{localmodels}, the closure of $\tld{U}_s$ in $\tld{\Fl}^{\nabla_0}_\cJ$ equals the closure of $\tld{U}_1$ in $\tld{\Fl}^{\nabla_0}_\cJ$, which by definition is $\tld{\cC}_\sigma$.
To see that $\tld{U}_s = \tld{S}^\circ (\tld{z}^*,st_\omega)^{\nabla_0}$ is open in its closure inside $\tld{\Fl}_\cJ^{\nabla_0}$, note first of all that $\tld{S}^\circ(\tld{z}^*,st_\omega)$ is open in $\tld{S}(\tld{z}^*,st_\omega)$ because open Schubert cells are open in their closures.
Hence $\tld{U}_s$ is open in $\tld{S}(\tld{z}^*,st_\omega)^{\nabla_0}$, which is closed in $\tld{\Fl}_\cJ^{\nabla_0}$ and therefore contains the closure of $\tld{U}_s$.
$\Box$

\begin{lemma}[Property B]\label{Property B}
The collection $\{\widetilde{U}_{s}\}_{s\in W}$ is an open cover of $\widetilde{U}$. 
\end{lemma}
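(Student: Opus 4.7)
The plan is to use the Bruhat decomposition of $\cK = L^+\GL_n$, namely $\cK = \bigsqcup_{s \in W} I s I$, applied componentwise in $\iota \in \cJ$ to yield $\cK^\cJ = \bigsqcup_{s \in \underline{W}} \prod_\iota I s_\iota I$. Given an $\ovl{\F}$-point $[\tld{z}^* k v^\omega] \in \tld{U}$, the element $k$ lies in a unique Bruhat cell indexed by some $s \in \underline{W}$, and I would show $[\tld{z}^* k v^\omega] \in \tld{U}_s$. Combined with Property~A (each $\tld{U}_s$ is open in $\tld{\cC}_\sigma$, hence in $\tld{U}$), this gives the desired open cover.

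To prove the containment $[\tld{z}^* k v^\omega] \in \tld{U}_s$ for $k = i_1 s i_2$ with $i_1, i_2 \in I^\cJ$, I would rewrite
\[
\tld{z}^* i_1 s i_2 v^\omega \;=\; (\tld{z}^* i_1 s v^\omega) \cdot (v^{-\omega} i_2 v^\omega)
\]
and show that, for a suitably adjusted $i_1' \in I$, one has $[\tld{z}^* i_1 s i_2 v^\omega] = [\tld{z}^* i_1' s v^\omega]$ in $\tld{\Fl}^\cJ$. This amounts to the equality of subsets
\[
I_1^\cJ \cdot \tld{z}^* I s I \cdot v^\omega \;=\; I_1^\cJ \cdot \tld{z}^* I s \cdot v^\omega
\]
in $L\GL_n^\cJ$, which in turn reduces to a containment of the form $s I s^{-1} \subseteq (\tld{z}^{*-1} I_1^\cJ \tld{z}^*) \cdot I$.

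To verify this, I would use the Iwahori decomposition $i_2 = t \cdot u^+ \cdot u^-$ together with the strict dominance of $\omega - \eta \in \underline{C}_0$: this ensures that the lower piece $v^{-\omega} u^- v^\omega$ lies deep inside $I_1$ and the torus piece is absorbable into $i_1$, leaving only the ``polar'' upper-triangular contribution from $v^{-\omega} u^+ v^\omega$. This last contribution must be pushed through $s$ (producing pieces outside $I$ in $s I s^{-1}$, namely constant-valued lower unipotent factors) and then conjugated by $\tld{z}^* = w^{-1} t_\nu w_0$ to land in $I_1$.

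The main obstacle is this final absorption step: it requires a careful matrix computation showing that the constant-valued lower-triangular contributions from $s I s^{-1}$, once conjugated by $\tld{z}^*$, become $v$-divisible enough to lie in $I_1$. This is where the depth hypothesis on $\sigma$ (at least $(n-1)$-deep) enters, constraining $(w, \nu)$ so that the necessary $v$-adic divisibility in the conjugation by $t_\nu$ and $w_0$ holds after the Weyl twist by $w^{-1}$. Alternatively, one could prove Property~B more pragmatically by producing explicit matrix coordinates on each $\tld{U}_s$ (via an Iwahori-type normalization of $k \in \cK^\cJ$ along its Bruhat cell) and then checking the cover on these coordinates directly.
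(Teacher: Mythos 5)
Your proposed argument has a fatal flaw at the very first step: the Bruhat decomposition $\cK = \bigsqcup_{s\in W} IsI$ is a \emph{stratification} by locally closed pieces (most of which are not open), and the chart $\tld{U}_s$ does \emph{not} correspond to the stratum $IsI$. Concretely, your claim that $k \in IsI$ implies $[\tld{z}^* k v^\omega] \in \tld{U}_s$, i.e.\ that $I_1\tld{z}^* I s I \subseteq I_1 \tld{z}^* I s$, is false. Take $n=2$, $\tld{w}=1$, so $\tld{z}^* = \textrm{w}_0 = s_1$, and take $s = s_1$. Then $\tld{U}_{s_1}$ (before imposing $\nabla_0$) is $I_1 \backslash I_1 s_1 I s_1 v^\omega$, and the condition that $[s_1 k v^\omega]$ lies there is equivalent to $\pi_v(k)_{12} \neq 0$, where $\pi_v$ is reduction mod $v$. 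On the other hand, $k \in I s_1 I$ is equivalent to $\pi_v(k)_{21} \neq 0$. The matrix $\bigl(\begin{smallmatrix}1&0\\1&1\end{smallmatrix}\bigr)$ satisfies the latter but not the former. Such a point lies in $\tld{U}_1$, not $\tld{U}_{s_1}$, so the Bruhat-cell-to-chart correspondence you propose is incorrect. (The rest of your outline inherits this: the reduction to $sIs^{-1} \subseteq \Ad_{\tld{z}^{*-1}}(I_1)\cdot I$ drops the prefactor of $I$ without justification, and your depth-dependent absorption argument is aimed at proving the false containment.)

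The paper's argument is instead a covering argument, not a stratification argument: one shows $\cK = \bigcup_{s\in W} I\textrm{w}_0 I s$, which is an \emph{open} cover of $\cK$ by right translates of the big double coset $I\textrm{w}_0 I$ (this follows by pushing forward along $\pi_v : I\backslash \cK \to B\backslash \GL_n$ and using the standard covering of the flag variety by $W$-translates of the big cell). One then absorbs $\textrm{w}_0$ via the Demazure multiplication rule $I\tld{w}^* I \cdot I\textrm{w}_0 I = I\tld{z}^*I$, valid because $\tld{z}^* = \tld{w}^*\textrm{w}_0$ is a reduced factorization (a consequence of $\tld{w}\in\underline{\tld{W}}_1^+$). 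This yields $I\tld{z}^*\cK = \bigcup_s I\tld{z}^* I s$, and passing from $I$ to $I_1$ is the $T$-torsor step. Note in particular that no depth hypothesis on $\sigma$ is needed here, contrary to your expectation; the argument is purely about the combinatorics of double cosets and the geometry of the finite flag variety.
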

\begin{proof}
Consider the map $\pi_v : I \backslash \cK \to B \backslash \GL_n$ given by evaluating at $v=0$, where $B \leq \GL_n$ is subgroup of upper triangular matrices.
Note that the image of $I \textrm{w}_0 I$ under $\pi_v$ is the open Schubert cell $B \textrm{w}_0 B$.
Since the flag variety $B \backslash \GL_n$ is covered by Weyl group translates of the open Schubert cell, we have 
\begin{equation}\label{eq:IK=cup_IwIs}
        I\textrm{w}_0\cK = \bigcup_{s\in W} I\textrm{w}_0Is.
\end{equation} 
By the Demazure factorization \cite[Proposition 2.8]{iwahori1965some} combined with \eqref{eq:IK=cup_IwIs}, 
\begin{equation}\label{eq:IwK=cup_IwIs}
    I\tld{z}^*\cK = I\tld{w}^*I\textrm{w}_0\cK = \bigcup_{s\in W} I\tld{w}^*I\textrm{w}_0Is = \bigcup_{s\in W} I\tld{z}^*Is.
\end{equation}
(Here, we use that $\tld{z}^* = \tld{w}^* \textrm{w}_0$ is a reduced factorization. This is true because $\tld{w} \in \underline{W}_1^+$, implying that $\textrm{w}_0 \tld{w}$ is a reduced factorization, which is preserved by $*$.)
The rest follows from $I_{1}\widetilde{z}^*I$ being a trivial $T$-torsor over $I\widetilde{z}^*I$.
\end{proof}

\subsection{Upper triangular coordinates} 
\label{1+2}
Finally, we turn to ``Property C'', namely the existence of certain coordinates on the charts $\tld{U}_s$.
Similar computations to the ones we do in this section can be found in \cite[Section 4.2]{localmodels}.

We will have to introduce some notation.
\begin{defn}
\label{defined-by-valuation-bounds}
We will say that a subfunctor $X \subset L^+\GL_n$ is \textit{defined by valuation bounds} if $X$ is of the form
$$ X(R) = \left\lbrace A \in L^+ \GL_n(R) : \begin{array}{c} \text{ the }(k,l)\text{'th entry of }A\text{ is of the form } \\ a_{m_{kl}}v^{m_{kl}} + a_{m_{kl}+1}v^{m_{kl}+1} + \cdots + a_{n_{kl}}v^{n_{kl}}\text{, for }a_i \in R \end{array} \right\rbrace ,$$
and in this case we say that $X$ has valuation bounds $[m_{kl},n_{kl}]$ in position $(k,l)$.
\end{defn}
For example, 
$$ X(R) = \left\lbrace
\begin{pmatrix} 
a_{-3}v^{-3} + a_{-2}v^{-2} + a_{-1} v^{-1} & b_{-1} v^{-1} + b_0 \\
0 & d_0
\end{pmatrix}
: a_i, b_i, d_i \in R \right\rbrace \subset L \GL_2(R) $$
is given by valuation bounds $\begin{pmatrix}
[-3,-1] & [-1,0] \\ \emptyset & [0,0]
\end{pmatrix}.$
More generally, a subfunctor $X = \prod_{\iota \in \cJ} X_\iota \subset L \GL_n^\cJ$ is defined by valuation bounds if each $X_\iota$ is defined by valuation bounds.

The positive loop group $L^+ \GL_n$ is defined by valuation bounds $[0,\infty]$ in every entry, and the Iwahori subgroup $I$ is also given by valuation bounds.
In fact, Bruhat-Tits theory gives a convenient way to describe $\Ad_{\tld{t}} (I)$ in terms of valuation bounds for any $\tld{t} \in \tld{\underline{W}}$, as follows.
The group $\GL_n(k((v)))$ acts on the enlarged building, which contains the standard apartment $X_*(T) \otimes_\Z \R$, and $I$ is the stabilizer of the antidominant base alcove $\textrm{w}_0 A_0$, where  $$ A_0 = \lbrace x \in X^*(T) \otimes_\Z \R : 0 < \langle \alpha, x \rangle < 1 \text{ for all } \alpha \in \Phi^+ \rbrace$$ is the base alcove.\footnote{Our convention that $I$ is the stabilizer of the antidominant base alcove differs from some of the literature related to Bruhat-Tits theory, for example \cite{kaletha-prasad}, in which $I$ would be the stabilizer of the dominant base alcove. We use this convention because it is better suited for working with the flag variety $\Fl = I \backslash L\GL_n$; in particular, the Bruhat ordering determined by reflections across walls of the antidominant base alcove is compatible with the Schubert stratification on $\Fl$ \cite[Section 3]{faltings-loop-groups}.}
Correspondingly, $\Ad_{\tld{t}}(I)$ is the stabilizer of the alcove $\tld{t} \textrm{w}_0 A_0$.
This stabilizer, in turn, is defined by valuation bounds.
It will be more convenient to phrase these in terms of the lower triangular Iwahori subgroup, which stabilizes the base alcove $A_0$ and is transpose to $I$.
\begin{prop}
\label{iwahori-valuation-bounds}
Let $\ovl{I} \subset L^+ \GL_n$ be the Iwahori subgroup which is lower triangular modulo $v$, and let $\tld{t} \in \tld{W}$.
Suppose $\tld{t} A_0$ is described by inequalities $n_\alpha < \langle \alpha, x \rangle < n_\alpha+1$, where $n_\alpha \in \Z$, for each $\alpha \in \Phi^+$.
Then $\Ad_{\tld{t}}(\ovl{I})$ is defined by valuation bounds
\begin{itemize}
\item $[0,\infty]$ on the diagonal entries.
\item $[n_\alpha+1,\infty]$ in position corresponding to $\alpha$, for $\alpha \in \Phi^+$.
\item $[- n_\alpha,\infty]$ in position corresponding to $-\alpha$, for $\alpha \in \Phi^+$.
\end{itemize}
\end{prop}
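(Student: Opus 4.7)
The plan is a direct computation via the decomposition $\tld{t} = w t_\nu$ with $w \in \underline{W}$ and $\nu \in X_*(T)$. Since $\Ad_{\tld{t}} = \Ad_w \circ \Ad_{t_\nu}$, I will track how each factor transforms the valuation bounds of $\ovl{I}$, and separately compute $\tld{t} A_0$ to extract the integers $n_\alpha$; a case check will then match the two.

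The bound computation is straightforward. Representing $t_\nu$ by $\mathrm{diag}(v^{\nu_1},\dots,v^{\nu_n})$, conjugation multiplies the $(i,j)$-entry by $v^{\nu_i-\nu_j}$, so the bounds $[0,\infty]$, $[1,\infty]$, $[0,\infty]$ of $\ovl{I}$ on the diagonal, strictly upper, and strictly lower triangular parts become $[0,\infty]$, $[1+\nu_i-\nu_j,\infty]$, $[\nu_i-\nu_j,\infty]$. Conjugation by the Weyl lift $w$ (a signed permutation matrix) then sends the $(i,j)$-position to the $(w^{-1}(i), w^{-1}(j))$-position of the previous matrix; since scalar units are invisible to valuation bounds, the resulting bounds of $\Ad_{\tld{t}}(\ovl{I})$ have a closed form in terms of $w$ and $\nu$.

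For the geometric side, for $\alpha \in \Phi^+$ and $y \in A_0$ I compute $\langle \alpha, w(\nu+y)\rangle = \langle w^{-1}\alpha,\nu\rangle + \langle w^{-1}\alpha,y\rangle$, whose second term lies in $(0,1)$ if $w^{-1}\alpha \in \Phi^+$ and in $(-1,0)$ if $w^{-1}\alpha \in \Phi^-$. This gives $n_\alpha = \langle w^{-1}\alpha,\nu\rangle$ or $\langle w^{-1}\alpha,\nu\rangle - 1$ respectively. A four-case check---positive versus negative $\alpha$, crossed with positive versus negative $w^{-1}\alpha$---matches these $n_\alpha$ against the bounds from the previous paragraph, confirming $[n_\alpha+1,\infty]$ at the position of $\alpha \in \Phi^+$ and $[-n_\alpha,\infty]$ at the position of $-\alpha$.

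The main obstacle is purely bookkeeping: fixing the identification of $\alpha = e_i - e_j \in \Phi^+$ with the strictly upper-triangular $(i,j)$-position, confirming the rule $(\Ad_w g)_{ij} = g_{w^{-1}(i), w^{-1}(j)}$ (up to units) for the chosen Weyl lift, and keeping track of the off-by-one shift in $n_\alpha$ when $w^{-1}\alpha$ is negative. A more conceptual proof via Bruhat--Tits theory---writing $\ovl{I}$ as generated by $T(R[[v]])$ and the affine root subgroups $U_a$ with $a \geq 0$ on $A_0$, and using $\Ad_{\tld{t}}(U_a) = U_{\tld{t} a}$---is available, but the genuine content is the same sign-and-index tracking, so I would stick with the direct matrix computation.
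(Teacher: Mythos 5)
Your proof is correct, but it takes a genuinely different route from the paper's. The paper does not compute anything: it simply cites \cite[Axiom~4.1.8]{kaletha-prasad} and flags, in a footnote, a discrepancy between that reference's sign conventions and the paper's (the paper lets $I$ stabilize the \emph{antidominant} alcove $\textrm{w}_0 A_0$, so $\ovl{I}$ stabilizes $A_0$). You instead give a self-contained matrix computation: write $\tld{t} = w t_\nu$, track the valuation bounds of $\ovl{I}$ under $\Ad_{t_\nu}$ (entry $(i,j)$ shifts by $\nu_i - \nu_j$) and then under $\Ad_w$ (position $(i,j)$ pulls back from $(w^{-1}(i), w^{-1}(j))$), and match these against the explicit inequalities describing $\tld{t}A_0 = w(A_0 + \nu)$ via the formula $n_\alpha = \langle w^{-1}\alpha, \nu\rangle$ or $\langle w^{-1}\alpha,\nu\rangle - 1$ according to the sign of $w^{-1}\alpha$. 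The four-way check does close up. The one point that deserves to be made explicit rather than left as ``bookkeeping'' is the normalization that $v^\nu = \mathrm{diag}(v^{\nu_1},\dots,v^{\nu_n})$ acts on the apartment by translation by $+\nu$ (equivalently, the point $\mu \in X_*(T)\otimes\R$ corresponds to the lattice $v^\mu\cO_K^n$), since the opposite sign is also common; this is exactly the convention that makes $\ovl{I}$ the stabilizer of $A_0$, and it is the ``discrepancy'' the paper's footnote warns about. Your elementary computation has the advantage of making that sign convention concrete and checkable, which is arguably preferable here given that the result is immediately fed into explicit matrix manipulations (Lemma~\ref{conjugated-iwahori-description} and the charts of Section~\ref{section:open-charts}); the paper's citation is shorter but pushes the sign-matching onto the reader.
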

\begin{proof}
This is a basic fact in Bruhat-Tits theory. See for example \cite[Axiom 4.1.8]{kaletha-prasad}, keeping in mind the discrepancy between their and our conventions as pointed out in the footnote.
\end{proof}
\begin{rmk}
    By the position corresponding to $\alpha$ we mean the following. Let $\alpha\in \Phi^+$. This comes with an isomorphism $\bG_a\xrightarrow{\sim}U_{\alpha}$.
    The subgroup $U_{\alpha}\subset L\GL_n$ will be non-zero at exactly one matrix entry with coordinates $(i,j)$. These coordinates are the position of $\alpha$. 
\end{rmk}

\begin{lemma}
\label{conjugated-iwahori-description}
The Iwahori subgroup $\Ad_{\tld{z}^{-*}}(I^\cJ)$, where $\tld{z}^{-*} = (\tld{z}^*)^{-1}$ is shorthand, is defined by the following valuation bounds:
\begin{itemize}
    \item On the diagonal entries: $[0,\infty]$
    \item On the superdiagonal entries (the entries 1 above the diagonal): $[1,\infty]$
    \item On the remaining upper triangular entries: $[m_{\alpha,\iota}+1,\infty]$ for some $m_{\alpha,\iota} \geq 0$ depending on $\tld{w}_\iota$.
    \item On the subdiagonal entries (the entries 1 below the diagonal): $[0,\infty]$
    \item On the remaining lower triangular entries: $[-m_{\alpha,\iota},\infty]$ for the same $m_{\alpha,\iota} \geq 0$ depending on $\tld{w}_\iota$.
    \end{itemize}
\end{lemma}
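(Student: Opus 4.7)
The plan is to reduce to Proposition \ref{iwahori-valuation-bounds} via an alcove computation. Since $\Ad_{\tld{z}^{-*}}$ acts factorwise on $I^\cJ$, I may restrict to a single factor. The upper and lower triangular Iwahori subgroups are related by $I = \Ad_{\textrm{w}_0}(\ovl I)$, because both are stabilizers of the alcove $\textrm{w}_0 A_0$ (independently of the lift of $\textrm{w}_0$, since $I$ is normalized by the constant torus $T$). Therefore
$$\Ad_{\tld{z}^{-*}}(I) = \Ad_{\tld{t}}(\ovl I), \qquad \text{where } \tld{t} \defeq \tld{z}^{-*}\textrm{w}_0,$$
and the problem is reduced to computing the alcove $\tld{t} A_0$ and applying Proposition \ref{iwahori-valuation-bounds}.

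For the computation of $\tld{t}$, I would use the formula $\tld{z}^* = w^{-1} t_\nu \textrm{w}_0$ from Definition \ref{*} together with the commutation relation $\textrm{w}_0 t_\mu = t_{\textrm{w}_0 \mu} \textrm{w}_0$, obtaining
$$\tld{t} = \textrm{w}_0 t_{-\nu} w\textrm{w}_0 = t_{-\textrm{w}_0 \nu}(\textrm{w}_0 w\textrm{w}_0).$$
Writing $w' \defeq \textrm{w}_0 w\textrm{w}_0$, this gives $\tld{t} A_0 = -\textrm{w}_0 \nu + w' A_0$. Reading off the inequalities defining this alcove in the standard apartment, the integers $n_\alpha$ of Proposition \ref{iwahori-valuation-bounds} come out to
$$n_\alpha = \langle \alpha, -\textrm{w}_0 \nu \rangle + \delta_\alpha, \qquad \delta_\alpha = \begin{cases} 0 & \text{if } w'^{-1}\alpha \in \Phi^+, \\ -1 & \text{if } w'^{-1}\alpha \in -\Phi^+, \end{cases}$$
for each $\alpha \in \Phi^+$.

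The remaining task, which I expect to be the main obstacle, is to show that $n_\alpha = 0$ whenever $\alpha$ is simple and $n_\alpha \geq 0$ in general; the lemma then follows at once from Proposition \ref{iwahori-valuation-bounds} by setting $m_\alpha \defeq n_\alpha$. Geometrically these two assertions amount to saying that $\tld{t} A_0$ lies in the dominant Weyl chamber and is separated from $A_0$ by no simple-root wall of the form $\{\langle \alpha, x \rangle = 1\}$. To establish them, I would unpack the condition that $(\tld{w},\omega)$ is a lowest alcove presentation of $\sigma$, namely that $\tld{w} \cdot \underline{C}_0$ is $p$-restricted in the sense of Section \ref{sec:lowest-alcove-presentations}, and translate the $p$-dot action constraints on $\nu$ and $w$ into the unscaled alcove picture. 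The delicate part is keeping track of the interaction between the $*$-involution on $\tld{W}$ and the geometry of the apartment, but once these conventions are reconciled the required bounds on $n_\alpha$ fall out directly.
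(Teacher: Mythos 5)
Your reduction is essentially the same as the paper's: both bring the question down to Proposition \ref{iwahori-valuation-bounds} by computing an alcove in the standard apartment. The only packaging difference is that the paper takes the transpose and computes $(\Ad_{\tld{z}^{-*}}I)^* = \Ad_{\tld{z}}\ovl{I}$, reading off bounds from $\tld{z}\underline{A}_0$, whereas you write $I = \Ad_{\textrm{w}_0}(\ovl I)$ and compute $\Ad_{\tld{t}}(\ovl I)$ with $\tld{t} = \tld{z}^{-*}\textrm{w}_0$. These are reconciled by the observation $\tld{t} A_0 = -\tld{z} A_0$ (transposing the valuation bounds is the same as negating the alcove), so the two computations are equivalent.

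However, you explicitly flag the key step as undone: showing that $n_\alpha = 0$ for simple $\alpha$ and $n_\alpha \geq 0$ in general. That is not a formality; it is the entire content of the lemma, and without it the bounds $[1,\infty]$, $[0,\infty]$ on the super/anti-superdiagonal are not justified. The paper does supply this step, in one line: since $\tld{w}\in\tld{\underline W}_1^+$, the alcove $\tld{z}\underline A_0 = \textrm{w}_0\tld{w}\underline A_0$ is ``anti-$p$-restricted,'' meaning it satisfies $-1 < \langle\alpha,x\rangle < 0$ for all $\alpha\in\Delta$. Under $\tld{t}A_0 = -\tld{z}A_0$ this gives $0 < \langle\alpha,x\rangle < 1$ for $\alpha\in\Delta$ on $\tld{t}A_0$, i.e.\ $n_\alpha = 0$ on simple roots; and since $\tld{t}A_0$ then sits in the fundamental box adjacent to $A_0$, it lies in the dominant chamber, giving $n_\alpha \geq 0$ for all $\alpha\in\Phi^+$. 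To finish your proof you would need to carry out exactly this translation of the $p$-restricted condition from the $p$-dot picture to the affine apartment, which you correctly identify as ``the delicate part'' but do not actually perform.
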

\textit{Proof.}
Let us argue in the case $K = \Q_p$ and $\#\cJ = 1$; the general case is then just a matter of changing the notation.

We can describe the transpose as $(\Ad_{\tld{z}^{-*}} I)^* = \Ad_{\tld{z}} \ovl{I}$.
By assumption, $\tld{w} \in \tld{\underline{W}}_1^+$, so $\tld{z} \cdot \underline{C}_0 = \textrm{w}_0 \tld{w} \cdot \underline{C}_0$ is ``anti-$p$-restriced''.
What this means is that the corresponding alcove $\textrm{w}_0 \tld{w}\underline{A}_0$ is described by inequalities of the form $-1 < \langle \alpha, x \rangle < 0$ for $\alpha \in \Delta$.
The result is then clear from Proposition \ref{iwahori-valuation-bounds}. $\Box$

\begin{lemma}
\label{upper-triangular-coordinates}
There exists a subfunctor $\tld{\cA} \subset L^+ B^\cJ$ such that:
\begin{enumerate}
\item The canonical map \begin{align*}
\tld{\cA} & \lrisom \tld{S}^\circ(\tld{z}^*) = \prod_{\iota \in \cJ} I_1 \backslash I_1 \tld{z}_\iota^* I \\
(A_\iota)_{\iota \in \cJ} & \mapsto \left( I_1 \tld{z}_\iota^* A_\iota \right)_{\iota \in \cJ}
\end{align*}
is an isomorphism.
\item $\tld{\cA}$ is defined by valuation bounds, which are:
\begin{itemize}
\item $\emptyset$ in the strictly lower triangular entries. (This is redundant as $\tld{\cA} \subset L^+ B^\cJ$.)
\item $[0,0]$ on diagonal and superdiagonal entries.
\item $[0,m]$ for $m\geq 0$ depending on $\tld{w}$ in all other upper triangular entries.
\end{itemize}
\end{enumerate}
\end{lemma}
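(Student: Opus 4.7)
The plan is to identify $\tld{\cA}$ as a scheme of right-coset-representatives for the natural quotient presenting $\tld{S}^\circ(\tld{z}^*)$, using the valuation-bound description of Lemma \ref{conjugated-iwahori-description}.

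First, I would translate the problem into a question about coset representatives. For each $\iota \in \cJ$, the equality of left $I_1$-cosets $I_1 \tld{z}_\iota^* A = I_1 \tld{z}_\iota^* A'$ holds if and only if $A(A')^{-1} \in I \cap \Ad_{\tld{z}_\iota^{-*}}(I_1)$. Hence the canonical map of the lemma factors as
\[
\tld{\cA} \hookrightarrow I^{\cJ} \twoheadrightarrow I^{\cJ}\,\big/\,\bigl(I^{\cJ} \cap \Ad_{\tld{z}^{-*}}(I_1^{\cJ})\bigr) \xrightarrow{\;\sim\;} \tld{S}^\circ(\tld{z}^*),
\]
so it suffices to choose $\tld{\cA} \subset L^+ B^{\cJ} \subset I^{\cJ}$ mapping bijectively onto the middle quotient. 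Note that $\tld{\cA}$ automatically lands in $I^{\cJ}$ because any matrix defined by the listed valuation bounds, with units in $R^\times$ forced on the diagonal, is upper triangular in $L^+ \GL_n^{\cJ}$.

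Second, I would use Lemma \ref{conjugated-iwahori-description} to pin down the entrywise valuation bounds for $I^{\cJ} \cap \Ad_{\tld{z}^{-*}}(I_1^{\cJ})$. Because the $I_1$-condition differs from the $I$-condition only in forcing diagonal entries to lie in $1 + vR[[v]]$, the bounds for $\Ad_{\tld{z}^{-*}}(I_1^{\cJ})$ on off-diagonal entries coincide with those of $\Ad_{\tld{z}^{-*}}(I^{\cJ})$. Intersecting with the bounds for $I$ (namely $[0,\infty]$ on and above the diagonal and $[1,\infty]$ strictly below), the upper-triangular part of $I^{\cJ} \cap \Ad_{\tld{z}^{-*}}(I_1^{\cJ})$ has bounds $[0,\infty]$ with constant coefficient $1$ on the diagonal, $[1,\infty]$ on the superdiagonal, and $[m_\alpha+1,\infty]$ on the remaining upper entries; the strict lower entries are in $vR[[v]]$. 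The bounds prescribed for $\tld{\cA}$ are precisely the ``complementary'' windows $[0,0]$ on diagonal and superdiagonal, and $[0,m_\alpha]$ on the remaining upper entries, so setting $m \defeq \max_\alpha m_\alpha$ recovers the form stated in the lemma.

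Third, I would prove the required bijection by exhibiting a unique factorization: every $i \in I$ can be written as $i = A \cdot j$ with $A \in \tld{\cA}$ and $j \in I \cap \Ad_{\tld{z}^{-*}}(I_1)$, uniquely in $R$. The procedure is a Gaussian-elimination argument compatible with the affine-root filtration on $I$: one processes entries in order of decreasing distance from the diagonal, using at each step the fact that right-multiplying by a suitable element of the appropriate root subgroup of $I \cap \Ad_{\tld{z}^{-*}}(I_1)$ (a) adjusts the targeted entry precisely in the tail $v$-range $[m_\alpha+1,\infty]$ (resp.~$[1,\infty]$ on the superdiagonal), uniquely reducing it to the prescribed polynomial range, and (b) preserves the already-normalized entries closer to the diagonal. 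This supplies the inverse to the map $\tld{\cA} \to \tld{S}^\circ(\tld{z}^*)$ and simultaneously verifies the asserted valuation bounds. The main technical obstacle is choosing the order of elimination so that non-abelianness of the Iwahori does not perturb normalized entries; this is handled exactly as in the standard Iwahori decomposition, by eliminating in the direction of increasing affine root height.
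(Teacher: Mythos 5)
Your proposal follows essentially the same route as the paper: identify $\tld S^\circ(\tld z^*)$ with a quotient of $I^\cJ$ by the conjugated Iwahori via orbit-stabilizer, read off valuation bounds from Lemma~\ref{conjugated-iwahori-description}, and then produce a canonical coset representative in the prescribed upper-triangular shape by an Iwahori-decomposition (Gaussian elimination) argument. The one organizational difference is that you work directly with $I^\cJ \cap \Ad_{\tld z^{-*}}(I_1^\cJ)$, whereas the paper first normalizes modulo $I^\cJ \cap \Ad_{\tld z^{-*}}(I^\cJ)$ to obtain $\cA$ with trivial diagonal and then sets $\tld\cA = T^\cJ\cA$ using the $T^\cJ$-torsor structure; both are correct and amount to the same computation.

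One small slip to fix: the relevant quotient is the \emph{left} coset space. Since $I_1 \tld z^* A = I_1 \tld z^* A'$ iff $A(A')^{-1} \in I \cap \Ad_{\tld z^{-*}}(I_1)$, the relation identifies $A$ and $A'$ precisely when they lie in the same left coset of $K := I^\cJ \cap \Ad_{\tld z^{-*}}(I_1^\cJ)$, so one should write $K\backslash I^\cJ$ (not $I^\cJ/K$), and the factorization in your third step should be $i = jA$ with $j\in K$, achieved by left-multiplication (row operations, as the paper says), not $i = A j$ by right-multiplication. With that orientation corrected, your elimination argument goes through exactly as intended.
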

\textit{Proof.}
Let us argue in the case $K = \Q_p$ and $\#\cJ = 1$; the general case is then just a matter of changing the notation.
As $I$ acts transitively by right translations on $I \backslash I \tld{z}^* I$, we have by the orbit-stabilizer theorem an isomorphism
\begin{align*}
(I \cap \tld{z}^{-*} I \tld{z}^*) \backslash I & \lrisom I \backslash I \tld{z}^* I = S^\circ(\tld{z}^*) \\
(I \cap \tld{z}^{-*} I \tld{z}^*) A & \mapsto I \tld{z}^* A .
\end{align*}
By Lemma \ref{conjugated-iwahori-description}, $I \cap \tld{z}^{-*} I \tld{z}^*$ is defined by the following valuation bounds:
\begin{itemize}
\item On the diagonal entries: $[0,\infty]$
\item On the superdiagonal entries: $[1,\infty]$
\item On the remaining upper triangular entries: $[m_\alpha + 1,\infty]$ for some $m_\alpha \geq 0$ depending on $\tld{w}$
\item On the strictly lower diagonal entries: $[1,\infty]$
\end{itemize}
By Iwahori decomposition (row operations), the quotient $(I \cap \tld{z}^{-*} I \tld{z}^*) \backslash I$ can be given coordinates $\cA \subset L^+ B^\cJ$ (in the sense that every coset has a unique representative in $\cA$) which is defined by the following valuation bounds:
\begin{itemize}
\item On lower triangular and diagonal entries: $\emptyset$. In this case, that means that the diagonal entries are all equal to $1$.
\item On the superdiagonal: $[0,0]$.
\item On other upper diagonal entries: $[0,m_\alpha]$ for the same $m_\alpha \geq 0$ depending on $\tld{w}$.
\end{itemize}
We can then define $\tld{\cA} \defeq T^\cJ \cA$, and using the fact that $\tld{S}^\circ (\tld{z}^*)$ is a trivial $T^\cJ$-torsor over $S^\circ(\tld{z}^*)$, both (1) and (2) follow. $\Box$

For the remainder of this section, let $\tld{\cA}$ be as in the above lemma.
The next step is to describe $\tld{U}_s$, the locus of $\tld{S}^\circ(\tld{z}^*,st_\omega)$ cut out by $\nabla_0$, in terms of a monodromy condition which is defined directly on $\tld{\cA}$. Towards this end, define for any $s \in \underline{W}$ the monodromy condition $\nabla_{\tld{z}^*,s\omega}$ on $L \GL_n^\cJ$ by
\begin{equation}
\label{advanced-monodromy-condition} 
\nabla_{\tld{z}^*,s\omega} : \,\,\,\,\, v \left( \frac{d}{dv}A \right) A^{-1} + A \operatorname{diag}(s\omega) A^{-1} \in \frac{1}{v} \Lie \Ad_{\tld{z}^{-*}}(I^\cJ) .
\end{equation}
It is straightforward to check that $A \in L \GL_n^\cJ$ satisfies $\nabla_{\tld{z}^*,s\omega}$ if and only if $\tld{z}^* A s v^\omega$ satisfies $\nabla_0$. The following is immediate:
\begin{lemma}
\label{upper-triangular-coordinates2}
Let $s \in \underline{W}$, and let $\tld{A}_s \defeq \tld{A}^{\nabla_{\tld{z}^*,s\omega}}$ be the locus of $\tld{A}$ cut out by $\nabla_{\tld{z}^*,s\omega}$. Then the canonical map
\begin{align*}
\tld{\cA}_s & \lrisom \tld{U}_s = \left( \prod_{\iota \in \cJ} I_1 \backslash I_1 \tld{z}_\iota^* I s_\iota v^{\omega_\iota} \right)^{\nabla_0} \\
(A_\iota)_{\iota \in \cJ} & \mapsto \left( I_1 \tld{z}_\iota^* A_\iota s_\iota v^{\omega_\iota} \right)_{\iota \in \cJ}
\end{align*}
is an isomorphism. Moreover, this map is equivariant for the action of $T^\cJ$ on $\tld{\cA}_s$ given by the formula 
\begin{equation}\label{eq:shifted-conjugation-on-upper-triangular-coordinates} (D \cdot A)_\iota = \Ad_{\mathrm{w}_0 w_\iota}(D_\iota) A_\iota \Ad_{s_\iota}(D_{\iota \circ \varphi})^{-1} \,\,\,\,\, \text{ for } (D,A) \in T^\cJ \times \tld{\cA}_s , 
\end{equation} 
and the shifted conjugation action on $\tld{U}_s$.
\end{lemma}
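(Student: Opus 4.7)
The plan is to build the isomorphism by composing the presentation of Lemma \ref{upper-triangular-coordinates} with right translation by $sv^\omega$, and then to transfer the monodromy condition via a direct differentiation.

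Lemma \ref{upper-triangular-coordinates} already gives an isomorphism $\tld{\cA} \lrisom \tld{S}^\circ(\tld{z}^*)$ via $(A_\iota) \mapsto (I_1 \tld{z}_\iota^* A_\iota)$. Right multiplication by $sv^\omega$ induces an isomorphism $\tld{S}^\circ(\tld{z}^*) \lrisom \tld{S}^\circ(\tld{z}^*, sv^\omega)$, so the composite $(A_\iota) \mapsto (I_1 \tld{z}_\iota^* A_\iota s_\iota v^{\omega_\iota})$ is an isomorphism onto $\tld{S}^\circ(\tld{z}^*, sv^\omega)$. What remains is to identify the preimage of $\tld{U}_s$ under this map with $\tld{\cA}_s$. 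Writing $X_\iota = \tld{z}_\iota^* A_\iota s_\iota v^{\omega_\iota}$, a direct computation using that $v^{\omega_\iota}$ is diagonal (hence commutes with scalar multiplication by $v$) and that $s_\iota \operatorname{diag}(\omega_\iota) s_\iota^{-1} = \operatorname{diag}(s_\iota \omega_\iota)$ yields
\begin{align*}
v \frac{dX_\iota}{dv} X_\iota^{-1} = \tld{z}_\iota^* \left( v \frac{dA_\iota}{dv} A_\iota^{-1} + A_\iota \operatorname{diag}(s_\iota \omega_\iota) A_\iota^{-1} \right) \tld{z}_\iota^{-*}.
\end{align*}
Hence $X_\iota$ satisfies $\nabla_0$ (i.e., the left-hand side lies in $\tfrac{1}{v} \Lie I$) if and only if the bracketed expression lies in $\tfrac{1}{v} \Lie \Ad_{\tld{z}_\iota^{-*}}(I)$, which is exactly the condition $\nabla_{\tld{z}^*, s\omega}$.

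For the equivariance statement, I would unfold the shifted conjugation action on the representative $\tld{z}_\iota^* A_\iota s_\iota v^{\omega_\iota}$, which has $\iota$-component $D_\iota \tld{z}_\iota^* A_\iota s_\iota v^{\omega_\iota} D_{\iota \circ \varphi}^{-1}$. Commuting $D_\iota$ past $\tld{z}_\iota^*$ and $D_{\iota \circ \varphi}^{-1}$ past $s_\iota v^{\omega_\iota}$ is a short computation: writing $\tld{z}^* = w^{-1} t_\nu \textrm{w}_0$ and using that elements of $T$ commute with the translation subgroup in $L \GL_n$, one finds $\Ad_{\tld{z}^{-*}}(D) = \Ad_{\textrm{w}_0 w}(D)$ for $D \in T$; and since $v^{\omega_\iota}$ is itself in $T$, it commutes with $D_{\iota \circ \varphi}^{-1}$, leaving only the conjugation of $D_{\iota \circ \varphi}^{-1}$ by $s_\iota$. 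This produces exactly the formula \eqref{eq:shifted-conjugation-on-upper-triangular-coordinates}.

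The only nonroutine step is the differentiation identity above; the rest is bookkeeping with Weyl elements and adjoint actions. One should also confirm that the transferred action preserves $\tld{\cA}$ (immediate from valuation bounds being preserved by row/column scaling) and preserves $\tld{\cA}_s$, which follows from the already-established fact that shifted conjugation preserves the $\nabla_0$-locus on $L \GL_n^\cJ$.
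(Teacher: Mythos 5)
Your approach matches the paper's, which treats this lemma as immediate from Lemma~\ref{upper-triangular-coordinates} together with the asserted equivalence ``$A$ satisfies $\nabla_{\tld{z}^*,s\omega}$ iff $\tld{z}^*Asv^\omega$ satisfies $\nabla_0$,'' so there is nothing structurally different here.

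There is, however, one slip worth correcting. The displayed identity
\[
v \frac{dX_\iota}{dv} X_\iota^{-1} = \tld{z}_\iota^* \left( v \frac{dA_\iota}{dv} A_\iota^{-1} + A_\iota \operatorname{diag}(s_\iota \omega_\iota) A_\iota^{-1} \right) \tld{z}_\iota^{-*}
\]
is not quite right, because $\tld{z}_\iota^*$ is \emph{not} constant in $v$: writing $\tld{z}_\iota^* = v^{w_\iota^{-1}\nu_\iota}\,w_\iota^{-1}\textrm{w}_0$, the Leibniz rule produces an additional summand $v\frac{d}{dv}(\tld{z}_\iota^*)\,\tld{z}_\iota^{-*} = \operatorname{diag}(w_\iota^{-1}\nu_\iota)$ on the right-hand side. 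This extra term is a constant diagonal matrix, hence lies in $\Lie I \subset \tfrac{1}{v}\Lie I$, so the claimed equivalence between $\nabla_0$ for $X_\iota$ and $\nabla_{\tld{z}^*,s\omega}$ for $A_\iota$ is unaffected; but as written the identity is false and you should record the correction. Also, ``$v^{\omega_\iota}$ is itself in $T$'' is imprecise --- $v^{\omega_\iota}$ lives in the translation part of $L\GL_n$, not in the constant torus $T$; what you actually use is simply that it is diagonal, hence commutes with $D_{\iota\circ\varphi}^{-1}\in T$. With those two corrections, the argument (including the equivariance bookkeeping $\Ad_{\tld{z}^{-*}}(D)=\Ad_{\textrm{w}_0 w}(D)$, preservation of valuation bounds, and preservation of the $\nabla_0$-locus) is correct and complete.
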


Observe that the action of $T^\cJ$ given on $\tld{\cA}$ in \eqref{eq:shifted-conjugation-on-upper-triangular-coordinates} takes a particularly nice form for a specific choice of $s$.
\begin{defn}
\label{easy-s}
Let $s_w \in \underline{W}$ be the element $s_w = \textrm{w}_0 \pi^{-1}(w)$. In other words, $s_w$ is the element with $(s_w)_\iota = \textrm{w}_0 w_{\iota \circ \varphi}$.
\end{defn}
Then the action \eqref{eq:shifted-conjugation-on-upper-triangular-coordinates} for $s = s_w$ takes the form 
\begin{equation}
\label{eq:easy-s-shifted-conjugation} 
\Ad_{\textrm{w}_0 w_\iota} (D_\iota) A_\iota \Ad_{\textrm{w}_0 w_{\iota \circ \varphi}}(D_{\iota \circ \varphi})^{-1} .
\end{equation}
In particular, the composite action
\[\begin{tikzcd}
	{T^{\mathcal{J}}} && {T^{\mathcal{J}}} & {\operatorname{Aut}(\tilde{\mathcal{A}})}
	\arrow["{\operatorname{Ad}_{\textrm{w}_0 w}^{-1}}", from=1-1, to=1-3]
	\arrow[from=1-3, to=1-4]
\end{tikzcd}\]
is now just the shifted conjugation action. This observation will be important when computing $T^\cJ$-invariant sections.

By inserting an appropriate shift, we can find coordinates on any chart $\tld{U}_s$ on which the action of $T^\cJ$ is ``nice'', as in the lemma below. However, note that these coordinates are no longer upper triangular.
\begin{lemma}
\label{translated-coordinates}
For any $s, t \in \underline{W}$, the map
\begin{align*}
\left(\tld{\cA}t\right)^{\nabla_{\tld{z}^*,s \omega}} & \lrisom \tld{U}_{ts} \\
(A_\iota)_{\iota \in \cJ} & \mapsto \left( I_1 \tld{z}_\iota^* A_\iota s_\iota v^{\omega_\iota} \right)_{\iota \in \cJ}
\end{align*}
is also an isomorphism, which is equivariant for the shifted conjugation action on $\tld{U}_{ts}$ and the action of $T^\cJ$ on $\tld{\cA}t$ given by \eqref{eq:shifted-conjugation-on-upper-triangular-coordinates}. In particular, this action is given by \eqref{eq:easy-s-shifted-conjugation} when $s=s_w$.
\end{lemma}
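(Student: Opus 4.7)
The strategy is to reduce the lemma to Lemma \ref{upper-triangular-coordinates2} via right-translation by $t$. Every $A \in \tld{\cA}t$ can be uniquely written as $A = Bt$ with $B \in \tld{\cA}$, so the map $B \mapsto Bt$ gives an isomorphism of schemes $\tld{\cA} \lrisom \tld{\cA}t$. Under this reparametrization, the candidate map of the lemma becomes
\[
B \mapsto \left(I_1 \tld{z}_\iota^* B_\iota (ts)_\iota v^{\omega_\iota}\right)_{\iota \in \cJ},
\]
which is exactly the isomorphism of Lemma \ref{upper-triangular-coordinates2} applied with $ts$ in place of $s$. It therefore remains to verify that this reparametrization identifies the two monodromy conditions and transports the shifted conjugation action correctly.

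For the monodromy condition, I would compute directly: since $t$ is a constant Weyl-group element, $\frac{d(Bt)}{dv}(Bt)^{-1} = \frac{dB}{dv}B^{-1}$, while $(Bt)\operatorname{diag}(s\omega)(Bt)^{-1} = B\bigl(t \operatorname{diag}(s\omega)t^{-1}\bigr)B^{-1} = B\operatorname{diag}(ts\omega)B^{-1}$, using that conjugation of the diagonal matrix $\operatorname{diag}(s\omega)$ by the permutation $t$ yields $\operatorname{diag}(ts\omega)$. Hence the condition $\nabla_{\tld{z}^*,s\omega}$ on $A = Bt$ coincides with the condition $\nabla_{\tld{z}^*,ts\omega}$ on $B$, so the reparametrization identifies $(\tld{\cA}t)^{\nabla_{\tld{z}^*,s\omega}}$ with $\tld{\cA}_{ts}$. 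Combined with Lemma \ref{upper-triangular-coordinates2}, the scheme-theoretic part of the lemma follows.

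For the equivariance, Lemma \ref{upper-triangular-coordinates2} applied with $ts$ in place of $s$ says that shifted conjugation on $\tld{U}_{ts}$ pulls back to the action $(D \cdot B)_\iota = \Ad_{\textrm{w}_0 w_\iota}(D_\iota) B_\iota \Ad_{(ts)_\iota}(D_{\iota \circ \varphi})^{-1}$ on $\tld{\cA}_{ts}$. Multiplying on the right by $t$ and invoking the identity $\Ad_{(ts)_\iota}(D_{\iota\circ\varphi})^{-1}\cdot t = t\cdot \Ad_{s_\iota}(D_{\iota\circ\varphi})^{-1}$ transports this into the action $(D \cdot A)_\iota = \Ad_{\textrm{w}_0 w_\iota}(D_\iota) A_\iota \Ad_{s_\iota}(D_{\iota\circ\varphi})^{-1}$ on $\tld{\cA}t$, which is precisely formula \eqref{eq:shifted-conjugation-on-upper-triangular-coordinates}; specializing to $s = s_w$ recovers \eqref{eq:easy-s-shifted-conjugation} since $(s_w)_\iota = \textrm{w}_0 w_{\iota\circ\varphi}$.

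I do not expect any serious obstacle: the substantive content lies entirely in Lemma \ref{upper-triangular-coordinates2}, and the present lemma follows essentially by a change of variables. The only thing that demands care is the bookkeeping of how the Weyl-group element $t$ passes through the derivative, the conjugation by diagonal matrices, and the $\Ad$-factors in the action formula.
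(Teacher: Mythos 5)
Your argument is correct. The paper states Lemma \ref{translated-coordinates} without a proof, treating it as an immediate extension of Lemma \ref{upper-triangular-coordinates2}; your change-of-variables $A = Bt$ is precisely the intended reduction. The three verifications you flag are the right ones and you carry them out correctly: the derivative term $v\frac{dA}{dv}A^{-1}$ is unchanged since $t$ is $v$-independent; $t\operatorname{diag}(s\omega)t^{-1} = \operatorname{diag}(ts\omega)$ holds because conjugating a diagonal matrix by a signed permutation matrix permutes its diagonal entries (the signs cancel), so $(\tld{\cA}t)^{\nabla_{\tld{z}^*,s\omega}}$ is carried onto $\tld{\cA}_{ts}$; and the identity $\Ad_{(ts)_\iota}(D_{\iota\circ\varphi})^{-1}\,t_\iota = t_\iota\,\Ad_{s_\iota}(D_{\iota\circ\varphi})^{-1}$ transports the action of \eqref{eq:shifted-conjugation-on-upper-triangular-coordinates} for $ts$ on $\tld{\cA}_{ts}$ to the claimed action on $\tld{\cA}t$, from which the specialization to $s = s_w$ is immediate. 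No gap.
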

\section{Upper bounds on global sections}
\label{section:upper-bounds}
From now on we assume that $K = \Q_p$; the extension to the case $K$ unramified will be done in Section \ref{section:general-case}.
Let us recall and spell out how some of the notation from Section \ref{section:step0} simplifies in this case:
\begin{enumerate}
\item Any product indexed over $\cJ$ has a single factor, corresponding to the identity embedding $\Q_p \hookrightarrow \Q_p$.
\item The shifted conjugation action of $T^\cJ$ is just the conjugation action of $T$.
\end{enumerate}

Assume that we are in the situation of Theorem \ref{presentation-of-EG-component}. That is, $\sigma$ is a $(3n-1)$-generic Serre weight with a chosen lowest alcove presentation $(\tld{w},\omega) = (t_\nu w, \omega)$. Recall that from Definition \ref{*}, $$\tld{z}^* = (\textrm{w}_0 \tld{w})^* = \tld{w}^* \textrm{w}_0 = w^{-1} t_\nu \textrm{w}_0 = t_{w^{-1} \nu} w^{-1} \textrm{w}_0 . $$
In this section we will describe a candidate for the ring of global sections on $\cX(\sigma)$. What we will actually do is to describe an upper bound on the ring of $T$-invariant sections $\cO(\tld{\cC}_\sigma)^{T}$ on $\tld{\cC}_\sigma$, which by Corollary \ref{global-functions-as-invariant-sections} amounts to the same thing.

The main idea is rather simple. For each $s \in W$ we have an open $\tld{U}_s = \tld{S}^\circ(\tld{z}^*,st_\omega)^{\nabla_0} = I_1 \backslash I_1 \tld{z}^* I s v^\omega \cap \tld{\Fl}^{\nabla_0}$ inside $\tld{\cC}_\sigma$, as described in Section \ref{section:open-charts}. We also have a larger open $\tld{U} = \bigcup_{s \in W} \tld{U}_s \subset \tld{\cC}_\sigma$ which is irreducible by Lemma \ref{large-open}. It follows that for any choice of $s \in W$, restriction of functions yields $T$-equivariant embeddings
$$ \cO(\tld{\cC}_\sigma) \hookrightarrow \cO(\tld{U}) \hookrightarrow \cO(\tld{U}_s) .$$
By taking $T$-fixed points, we obtain embeddings
$$ \cO(\cX(\sigma)) \hookrightarrow \cO(\tld{U})^{T} \hookrightarrow \cO(\tld{U}_s)^{T} . $$
It turns out that the first inclusion is an isomorphism, but that is the subject of Section \ref{section:step3}.
The goal of this section is to give explicit descriptions of the upper bounds $\cO(\tld{U})^{T}$ and $\cO(\tld{U}_s)^T$ for a suitable choice of $s$.
In fact the suitable choice of $s$ is $s = s_w = \textrm{w}_0 w$ from Definition \ref{easy-s}.
This makes the computation of $\cO(\tld{U}_s)^{T}$ easy, and we call this ring the \emph{simple upper bound} for $\cO(\cX(\sigma))$.

To compute the \emph{optimal upper bound} $\cO(\tld{U})^T$, we first produce elements of this ring by explicit construction. The subring generated by these elements inside the larger ring $\cO(\tld{U}_{s_w})^T$, which by now we have computed, give a lower bound for $\cO(\tld{U})^T$.
Finally, we prove that this lower bound is sharp by showing that no other elements of $\cO(\tld{U}_s)^{T}$ extend to $T$-invariant functions on $\tld{U}_{t}$ for all $t \in W$.

\subsection{Simple upper bound} \label{Simple upper bound}
We now compute $\cO(\tld{U}_{s_w})^T$, where $s_w = \textrm{w}_0 w$ from Definition \ref{easy-s}.
\begin{lemma}
\label{simple-upper-bound}
Restriction of functions via the canonical map $T \to \tld{U}_{s_w}$ yields an isomorphism $\cO(\tld{U}_{s_w})^{T} \cong \cO(T)$.
\end{lemma}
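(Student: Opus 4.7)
The plan is to transport the computation to upper-triangular coordinates. By Lemma \ref{upper-triangular-coordinates2} applied to $s = s_w = \textrm{w}_0 w$, the canonical map furnishes a $T$-equivariant isomorphism $\tld{\cA}_{s_w} \cong \tld{U}_{s_w}$. Since $K = \Q_p$, the $T$-action on $\tld{\cA}_{s_w}$ given in \eqref{eq:easy-s-shifted-conjugation} specializes to $D \cdot A = \Ad_{\textrm{w}_0 w}(D) \cdot A \cdot \Ad_{\textrm{w}_0 w}(D)^{-1}$; as $D \mapsto \Ad_{\textrm{w}_0 w}(D)$ is an automorphism of $T$, this action has the same invariants as the standard conjugation action. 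I therefore reduce to computing $\cO(\tld{\cA}_{s_w})^T$ with $T$ acting by ordinary conjugation.

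First I treat $\tld{\cA}$ without the monodromy condition. By Lemma \ref{upper-triangular-coordinates} the scheme $\tld{\cA}$ is isomorphic as an $\F$-scheme to $T \times \bA^N$ for some $N$: the factor $T$ parametrizes the diagonal, while $\bA^N$ collects the constant superdiagonal entries together with the coefficients of the polynomial entries in the strictly higher upper-triangular positions. Under conjugation, the diagonal factor $T$ is pointwise fixed (since $T$ is abelian), and every linear coordinate attached to an entry $(i,j)$ with $i<j$ is a $T$-weight vector of nonzero character $d_i/d_j$. Decomposing $\cO(\tld{\cA})$ into $T$-weight spaces therefore yields $\cO(\tld{\cA})^T = \cO(T)$.

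Finally I incorporate the monodromy condition. The subscheme $\tld{\cA}_{s_w} \subset \tld{\cA}$ is closed and $T$-stable, so its defining ideal $J \subset \cO(\tld{\cA})$ is also $T$-stable; by linear reductivity of the split torus $T$, taking invariants is exact, and hence
\[ \cO(\tld{\cA}_{s_w})^T = \cO(\tld{\cA})^T/(J\cap \cO(\tld{\cA})^T) = \cO(T)/(J\cap \cO(T)). \]
It remains only to check that the inclusion of constant diagonal matrices $T\hookrightarrow \tld{\cA}$ factors through $\tld{\cA}_{s_w}$: for $D\in T$ one has $v(dD/dv)D^{-1} = 0$ and $D\,\operatorname{diag}(s_w\omega)\,D^{-1} = \operatorname{diag}(s_w\omega)$, which lies in $\tfrac{1}{v}\Lie \Ad_{\tld{z}^{-*}}(I)$ by the diagonal valuation bound $[0,\infty]$ of Lemma \ref{conjugated-iwahori-description}. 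Thus the composite $\cO(T)\to \cO(\tld{\cA}_{s_w})^T\to \cO(T)$ is the identity, forcing $J\cap \cO(T) = 0$ and establishing the claimed isomorphism. The only potential obstacle---whether the monodromy condition could impose nontrivial relations among the torus coordinates---is resolved by this last step.
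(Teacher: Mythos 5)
Your proof is correct and takes essentially the same approach as the paper's: pass to the upper-triangular coordinates of Lemma \ref{upper-triangular-coordinates2}, absorb the $\Ad_{\textrm{w}_0 w}$ into a group automorphism so that the action becomes ordinary conjugation, and then use a weight decomposition plus linear reductivity of $T$. The only cosmetic difference is that the paper first splits $\tld{\cA}_{s_w} \cong T \times \cA_{s_w}$ (so the monodromy condition is imposed on the unipotent factor alone) and computes invariants factor by factor, whereas you impose the monodromy condition at the end via its $T$-stable ideal $J$ and then kill $J \cap \cO(T)$ by observing that the constant diagonal matrices factor through $\tld{\cA}_{s_w}$ — the same non-emptiness check the paper needs implicitly.
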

\begin{proof}
Write $s = s_w$ in order to simplify notation. Let $ U \defeq \prod_{\iota \in \cJ} I \backslash I \tld{z}_\iota^* \cK v^{\omega_\iota} \cap \Fl^{\nabla_0}_\cJ. $
Let $\cA_s \subset L^+ U$ satisfy that $\tld{\cA}_s = T \cA_s \subset L^+B$ is the upper triangular coordinates on $\tld{U}_s$ from Lemma \ref{upper-triangular-coordinates}.
Then, note that via the isomorphism $\tld{\cA}_s \lrisom \tld{U}_s$, the action \eqref{eq:shifted-conjugation-on-upper-triangular-coordinates} on $\tld{A}_s$ which corresponds to the shifted conjugation action on $\tld{U}_s$, in our case $s = \textrm{w}_0 w$ simplifies to
$$ D \cdot A = \Ad_s (D) A \Ad_s(D)^{-1} \,\,\,\,\, \text{ for } (D,A) \in T \times \tld{A}_s . $$
So we can describe the action as follows
$$ T\xrightarrow[]{\Ad_s} T\xrightarrow{}\Aut(\tld{\cA_s}).$$
Since we are interested in the $T$-invariant sections only, we can disregard the $\Ad_s$.

In this case, the shifted conjugation action is trivial on $T$ and therefore $\cO(T)^{T} = \cO(T)$.
Furthermore, since $\cA_s$ lies inside $L^+U$, we get $\cO(\cA_s)^{T}\cong \F $.  
Combining these two computations and noting that we have a $T$-equivariant isomorphism $\tld{U}_s\cong T\times \cA_s$ we get
$$ \cO(\tld{U}_s)^{T} \cong \left( \cO(T) \otimes_\F \cO(\cA_s) \right)^{T} \cong \cO(T) \otimes_\F \cO(\cA_s)^{T} \cong \cO(T) \otimes_\F \F, $$
which is exactly what we wanted to show.
\end{proof}

\begin{rmk}
The above lemma is reproved for the case $K$ unramified in Lemma \ref{lemma:functions-on-open-general-case}.
In that case, the shifted conjugation action of $T^\cJ$ on itself is non-trivial, so the computation becomes a little more involved.
\end{rmk}

\begin{rmk}\label{globalsectionsw}
Let $c_1,c_2,\dots, c_n : T \to \bG_m$ be the diagonal coordinates on the torus. Then Lemma \ref{simple-upper-bound} shows that
$$ \cO(\tld{U}_{s_w}) \cong \F[c_1^{\pm 1}, c_2^{\pm 1}, \dots, c_n^{\pm 1}] . $$
These coordinates will be used later.
\end{rmk}

\begin{exam} \label{toy_case}
Let's give an explicit calculation in $n=3$, $w = \textrm{w}_0=(13)$, $\eta_w=(2,1,0)$. In this case, $\tilde w= w^{-1} v^{\eta_w} \textrm{w}_0=\mathrm{Diag}(1, v, v^2)$, and $\widetilde{U_1}=I_1 \backslash I_1 \tilde w I v^{\omega} \cap \widetilde{\Fl}^{\nabla_0}$. Note that $I_1 \backslash (I_1 \widetilde{w} I) \to I \backslash (I_1 \widetilde{w} I) = I \backslash (I \widetilde{w} I)$ is a trivial $T$-torsor, it suffices to understand $I \backslash (I \widetilde{w} I) v^ \omega  \cap \widetilde{\Fl}^{\nabla_0}$. We have an isomorphism $(I \cap \widetilde{w}^{-1} I \widetilde{w}) \backslash I \cong I \backslash (I \widetilde{w} I)$ via the map $A \mapsto B=\widetilde{w}A$. Using the notations from Definition \ref{defined-by-valuation-bounds}, we can calculate that

 \[
        (I \cap \widetilde{w}^{-1} I \widetilde{w}) = 
       \begin{pmatrix}
            [0,0] & [1,\infty] & [2,\infty] \\
           [1,\infty]& [0,0] & [1,\infty]\\
            [1,\infty]& [1,\infty]&[0,0]
        \end{pmatrix} 
    \]
    
    \[
        (I \cap \widetilde{w}^{-1} I \widetilde{w}) \backslash I = 
        \left\{  \begin{pmatrix}
             1 & \alpha_0 & \delta_0 + \delta_1v \\
             0 & 1 & \beta_0 \\
             0 & 0 & 1
        \end{pmatrix} \right\}
    \]
    
     Checking the monodromy condition on $B = \widetilde{w} A v^\omega$, where $A$ is a matrix as above, gives us
    \[
        \left(v \frac{dB}{dv} \right) B^{-1} \in \frac{1}{v} \mathrm{Lie}(I) \iff \delta_0(\omega_3 - \omega_1) = \alpha_0 \beta_0 (\omega_2 - \omega_1)
    \]
    
  Note that $\omega_1 \ne \omega_3$ when $\sigma$ is sufficiently generic, hence $ I \backslash (I \widetilde{w} I) v^ \omega  \cap \widetilde{\Fl}^{\nabla_0} \cong \mathbb{A}^3$ with global sections $\alpha_0, \beta_0, \delta_1$. Going back to $I_1$, we can see that $$I_1 \backslash (I_1 \widetilde{w} I) v^\omega =  
 \left\{      \widetilde{w}  \begin{pmatrix}
             c_1 & \alpha_0 & \delta_0 + \delta_1v \\
             0 & c_2 & \beta_0 \\
             0 & 0 & c_3
        \end{pmatrix}v^\omega \right\}
 $$

  So $\widetilde{U}_1 = I_1 \backslash (I_1 \widetilde{w} I) v^\omega  \cap \widetilde{\Fl}^{\nabla_0} \cong \mathrm{Spec}(\mathbb{F}[\alpha_0, \beta_0, \delta_1, c_1^{\pm 1}, c_2^{\pm 1}, c_3^{\pm 1}])$. After taking the $T$-invariant sections, the off-diagonals will vanish, and hence we get $\mathcal O(\widetilde{U}_1)^T=\mathbb{F}[c_1^{\pm 1}, c_2^{\pm 1}, c_3^{\pm 1}]$.
\end{exam}

\subsection{Invariant sections on auxiliary charts }
\label{section:auxiliary-charts}
In addition to the $T$-invariant sections on the chart $\tld{U}_{s_w}$ that we computed in Lemma \ref{simple-upper-bound}, we will need to know the $T$-invariant sections on certain other subschemes of $\tld{U}$ as well.
Specifically, let $s_k \in W$ be the element corresponding to the permutation that swaps $k$ and $k+1$, and let $\tld{\cA}$ be the upper triangular coordinates from Lemma \ref{upper-triangular-coordinates}.

Consider the subfunctor $\tld{\cB}_k \subset \tld{\cA} s_k$ defined by
\begin{align*}
\tld{\cB}_k \defeq
\left\lbrace
\begin{pmatrix}
d_1 & & & & & & & & \\
& d_2 & & & & & & & \\
& & \ddots & & & & & & \\
& & & d_{k-1} & & & & & \\
& & & & a & d_k & & & \\
& & & & d_{k+1} & & & & \\
& & & & & & d_{k+2} & & \\
& & & & & & & \ddots & \\
& & & & & & & & d_n
\end{pmatrix}
\middle|
d_1,d_2,\dots,d_n \in \bG_m, a \in \bG_a
\right\rbrace .
\end{align*}
In fact, $\tld{\cB}_k$ is contained in the locus satisfying the monodromy condition ${\nabla_{\tld{z}^*,s\omega}}$ from \eqref{advanced-monodromy-condition}.
This is because if $A \in \tld{\cB}_k(R)$, then
$$ v \left( \frac{d}{dv} A \right) A^{-1} + A \operatorname{diag}(s \omega) A^{-1} = A \operatorname{diag}(s \omega) A^{-1} $$
has non-zero terms at most 1 away from the diagonal, and all non-zero terms being constants, so this is contained in $\frac{1}{v} \Lie \Ad_{\tld{z}^{-*}} (I)$ by the description of $\Ad_{\tld{z}^{-*}}(I)$ in terms of valuation bounds from Lemma \ref{conjugated-iwahori-description}.
By Lemma \ref{translated-coordinates}, it follows that there is a closed embedding 
\begin{align*}
\tld{\cB}_k & \hookrightarrow \tld{U}_{s_k s_w} \\
A& \mapsto I_1 \tld{z}^* A s_w v^\omega
\end{align*}
which is equivariant for the action of $D \in T$ on $A \in \tld{\cB}_k$ given by $\Ad_{\textrm{w}_0 w}(D) A\Ad_{\textrm{w}_0 w}(D)^{-1}$ and the conjugation action of $T$ on $\tld{U}_{s_k s_w}$.
\begin{lemma}
\label{step1.5}
The $T$-invariant functions on $\tld{\cB}_k$ are given by $$\cO(\tld{\cB}_k)^T \cong \F[ d_1^{\pm 1}, d_2^{\pm 1}, \dots,  d_{k-1}^{\pm 1} , a , (d_k d_{k+1})^{\pm 1} , d_{k+2}^{\pm 1}, \dots, d_n^{\pm 1} ] .$$
\end{lemma}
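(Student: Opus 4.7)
The plan is a direct computation in the explicit coordinates provided by $\tld{\cB}_k$. The functor $\tld{\cB}_k$ is parametrized by $n$ units $d_1,\dots,d_n$ and one free coordinate $a$, so as an $\F$-algebra
\[
\cO(\tld{\cB}_k) \;\cong\; \F[d_1^{\pm 1},\dots,d_n^{\pm 1},a].
\]
I would first rewrite the $T$-action. Because $\Ad_{\textrm{w}_0 w}\colon T\to T$ is an automorphism, the invariants for $D\cdot A=\Ad_{\textrm{w}_0 w}(D)\,A\,\Ad_{\textrm{w}_0 w}(D)^{-1}$ coincide with the invariants for the plain conjugation action $E\cdot A = EAE^{-1}$ with $E=\mathrm{diag}(e_1,\dots,e_n)$. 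Under this conjugation, an entry in position $(i,j)$ has weight $e_i e_j^{-1}$.

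Next I would read off the weights of the coordinate functions using the explicit matrix displayed in the definition of $\tld{\cB}_k$:
\begin{itemize}
\item $d_i$ for $i\in\{1,\dots,k-1,k+2,\dots,n\}$ sits on the diagonal in position $(i,i)$, hence has weight $0$;
\item $a$ sits in position $(k,k)$ and has weight $0$;
\item $d_k$ sits in position $(k,k+1)$, hence has weight $e_k e_{k+1}^{-1}$;
\item $d_{k+1}$ sits in position $(k+1,k)$, hence has weight $e_{k+1} e_k^{-1}$.
\end{itemize}
Since $\cO(\tld{\cB}_k)$ is a Laurent polynomial ring in $d_1,\dots,d_n$ with an extra free variable $a$, it is graded by the weight lattice of $T$, and the ring of invariants is spanned by the monomials whose total weight is $0$. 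The only relation among the listed weights is that $d_k$ and $d_{k+1}$ carry opposite weights, while every other generator already has weight $0$; hence the subalgebra of weight-zero elements is freely generated by
\[
d_1^{\pm 1},\dots,d_{k-1}^{\pm 1},\ a,\ (d_k d_{k+1})^{\pm 1},\ d_{k+2}^{\pm 1},\dots,d_n^{\pm 1},
\]
giving the claimed isomorphism.

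There is no real obstacle here: once one notes that the monodromy condition $\nabla_{\tld{z}^*,s\omega}$ is automatic on $\tld{\cB}_k$ (already observed in the text preceding the lemma) and that $\Ad_{\textrm{w}_0 w}$ can be absorbed into a change of torus variable, the lemma reduces to computing weight-zero monomials in a Laurent polynomial ring with one additional weight-zero generator, which is immediate from the weight list above.
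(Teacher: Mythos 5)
Your proof is correct and takes essentially the same approach as the paper. The paper leaves Lemma~\ref{step1.5} without an explicit proof, but the general-$f$ version (Lemma~\ref{step1.5-general}) is proved by exactly this weight-space computation: absorb $\Ad_{\textrm{w}_0 w}$ into a change of torus variable (as in Lemma~\ref{simple-upper-bound}), observe that a general monomial $\prod_i d_i^{m_i} a^n$ has weight $(m_k - m_{k+1})(e_k - e_{k+1})$, and conclude that the weight-zero subalgebra is freely generated by the listed elements. Your argument is the $f=1$ specialization of that proof, and it is complete.
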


\begin{proof}
    Let us give a sketch. For more details we refer to the proof of the more general statement in Lemma \ref{step1.5-general}. 

    We first note that $T$ acts trivially on all the functions $d_1^{\pm 1},...,d_{k-1}^{\pm 1}, d_{k+2}^{\pm 1},..., d_{n}^{\pm 1}$.
    So it remains to determine the $T$-invariant functions on 
    $\begin{pmatrix}
        a & d_k \\
        d_{k+1} & 0
    \end{pmatrix}.$
    We see that $T$ also acts trivially on $a$. 
    Let \(\cE_{i} \in X^*(T)\) denote the character \(\operatorname{diag}(t_{1},t_{2},\dots,t_{n}) \mapsto t_{i}\). 
    Then \(T\) acts on the function $d_k$ it via $\cE_{k}-\cE_{k+1}$ and on $d_{k+1}$ via $-(\cE_{k}-\cE_{k+1})$,
    thus the product of these two functions $d_k\cdot d_{k+1}$ and its inverse are invariant.
\end{proof}

\subsection{Optimal upper bound}
\label{section:step2}
Having computed the $T$-invariant sections on the chart $\tld{U}_{s_w}$  in Lemma \ref{simple-upper-bound} , we will now compute the $T$-invariant sections on $\tld{U}$.
We do so by first producing some $T$-invariant functions on $\tld{U}$, that in the end turn out to generate the entire ring $\cO(\tld{U})^T$.

Recall that $\cK = L^+ \GL_n$. Let $N \subset \GL_n$ denote the subgroup of lower triangular unipotent matrices. The canonical reduction modulo $v$ map $\pi_v : \cK \to \GL_n$ maps $\Ad_{\tld{z}^{-*}} I_1 \cap \cK$ to $N$ by Lemma \ref{conjugated-iwahori-description}. Therefore, the chain of maps
\begin{align*}
\alpha : \tld{U} \hookrightarrow I_1 \backslash I_1 \tld{z} \cK v^\omega \lrisom (\Ad_{\tld{z}^{-*}} I_1 \cap \cK) \backslash \cK  \stackrel{\pi_v}{\longrightarrow} N \backslash \GL_n \\
I_1 \tld{z}^* A v^\omega \mapsto (\Ad_{\tld{z}^{-*}} I_1 \cap \cK) A  \mapsto N \pi_v(A)
\end{align*}
gives a way to produce functions on $\tld{U}$, namely by pulling back functions on $N \backslash \GL_n$.
Note that the action of $T$ on $\GL_n$ which makes $\alpha$ equivariant, where $T$ acts on $\tld{U}$ by conjugation, is given by $\Ad_{\textrm{w}_0 w}(t) A t^{-1}$ for $(t,A) \in T \times \GL_n$. 
For the purposes of producing $T$-invariant functions on $N \backslash \GL_n$ with respect to this action, we use the substitution
$$A = B s_w = B \textrm{w}_0 w.\footnote{Formally, this corresponds to the right translation map $\cdot s_w^{-1} : N \backslash \GL_n \to N \backslash \GL_n$ sending $NA$ to $NB$. The resulting $T$-invariant functions on $\tld{U}$ are then produced by pulling back via $(\cdot s_w^{-1}) \circ \alpha$.}$$ 
Then the action of $T$ in terms of $B$ is given by
\begin{equation}
\label{period-action}
t.B=\Ad_{\textrm{w}_0w}(t) B \Ad_{\textrm{w}_0w}(t^{-1}) \,\,\,\,\, \text{for }(t,B) \in T \times \GL_n ,
\end{equation}
and $T$-invariant functions for this action are precisely the $T$-invariant functions for the ordinary conjugation action.
Such functions are easy to produce:
For $1 \leq i \leq n$ consider the function $x_i$ on $N \backslash \GL_n$ given by
\begin{align*}
x_i : N \backslash \GL_n & \to \bA^1 \\
N B & \mapsto \text{upper left }(i \times i)\text{-minor of }B .
\end{align*}
Then $x_i$ is invariant for the action of $T$ given by conjugation, therefore also invariant for the action of $T$ given by \eqref{period-action}.
\begin{rmk}
One can show that the inclusion $\F[x_1,x_2,\dots,x_{n-1},x_n^{\pm 1}] \subseteq \cO\left(N \backslash \GL_n \right)^T$ is an isomorphism, but we will not need this fact.
\end{rmk}
Using the map $\alpha$ above, we obtain $T$-invariant functions $\alpha^*x_i$ on $\tld{U}$, which in abuse of notation we also denote by $x_i$. Explicitly,
\begin{align*}
x_i : \tld{U} \subset I_1 \backslash I_1 \tld{z}^* \cK v^\omega & \to \bA^1 \\
I_1 \tld{z}^* (B s_w) v^\omega & \mapsto \text{upper left }(i \times i)\text{-minor of }B \mod v .
\end{align*}
Note the presence of $s_w$ on the left hand side; this is important in what follows.

We will now show that these functions $x_i$ generate the ring of all $T$-invariant functions on $\tld{U}$. Towards this end, we will need to describe the restriction of $x_i$ to the open $\tld{U}_{s_w} \subset \tld{U}$ from Section \ref{globalsectionsw} as well as to the locally closed $\tld{\cB}_k \subset \tld{U}$ from Section \ref{step1.5}.
Consider the following diagram:

\begin{tikzcd}
	&&  \cO\left(\left[\widetilde{U}_{s_w}/T\right]\right) \\
	 \cO\left( \left[N \backslash \GL_{n} /T \right] \right) &  \cO\left(\left[\widetilde{U}/T\right]\right) & \cO\left(\left[\widetilde{U}_{s_ks_w}/T\right]\right) & \cO\left(\left[\tld{\cB}_k/T\right]\right)
	\arrow["\alpha",from=2-1, to=2-2]
	\arrow[hook, from=2-2, to=1-3]
	\arrow[hook, from=2-2, to=2-3]
	\arrow[ from=2-3, to=2-4]
\end{tikzcd}

Let us compute the compositions of these maps:
\begin{itemize}
    \item Recall that $\tld{\cA}_{s_w}$ denotes the upper triangular coordinates on $\tld{U}_{s_w}$ from Lemma \ref{upper-triangular-coordinates2}, and $x_{i}|_{\tld{U}_{s_w}}$ as a function on $\tld{\cA}_{s_w}$ is the same as the upper $(i \times i)$-minor modulo $v$. Since $\tld{\cA}_{s_w}$ is upper triangular, this is the same as the product of the first $i$ diagonal entries modulo $v$. Hence the map $\cO\left( \left[N \backslash \GL_{n} /T \right] \right)\to \cO\left(\left[\widetilde{U}_{s_w}/T\right]\right)$, where the RHS by remark \ref{globalsectionsw} is isomorphic to $ \F[c_1^{\pm 1},...,c_n^{\pm 1}]$, sends
    $x_i$ to $c_1c_2\cdots c_i$,
    \item The bottom composition $r_{s_ws_k}\colon\cO\left( \left[N \backslash \GL_{n} /T \right] \right)\to \cO\left(\left[\tld{\cB}_k/T\right]\right)$, where the RHS \ref{step1.5} is isomorphic to $ \F[ d_1^{\pm 1}, d_2^{\pm 1}, \dots,  d_{k-1}^{\pm 1} , a , (d_k d_{k+1})^{\pm 1} , d_{k+2}^{\pm 1}, \dots, d_n^{\pm 1} ]$ , is given by
  \[   
r_{s_ws_k}(x_i)=
     \begin{cases}
       d_1 d_2 \cdots d_{i} &\quad\text{if } i<k,\\ 
       d_1 d_2 \cdots d_{k-1} a &\quad\text{if } i=k,\\ 
       - d_1 d_2 \cdots d_i &\quad\text{if } i>k.\\
     \end{cases}
\]
\end{itemize}
Now we have all the ingredients to prove the following lemma
\begin{lemma} 
\label{step2}
The functions on $\left[\widetilde{U}_{s_w}/T\right]$, as described in Remark \ref{globalsectionsw}, that extend to $\left[\widetilde{U}/T\right]$ are exactly generated by $c_{1}$, $c_{1}c_{2}$, ..., $c_{1}c_{2}...c_{n-1}$, $(c_{1}c_{2}...c_{n-1})^{\pm1}$. Therefore 
$$\cO\left(\left[\widetilde{U}/T\right]\right)\cong \F[x_1,...,x_{n-1},x_n^{\pm 1}].$$
\end{lemma}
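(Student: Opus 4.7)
The plan is to bracket $\cO(\tld U)^T$ between matching lower and upper bounds inside $\cO(\tld U_{s_w})^T \cong \F[c_1^{\pm 1},\ldots,c_n^{\pm 1}]$.

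For the lower bound, observe that $x_1,\ldots,x_{n-1}$ are by construction regular $T$-invariant functions on $\tld U$, and $x_n$ is (up to sign) the determinant of the reduction $\pi_v(B)$, which lies in $\GL_n$ by Lemma \ref{conjugated-iwahori-description}, so $x_n$ is invertible. Thus there is a well-defined map $\F[x_1,\ldots,x_{n-1},x_n^{\pm 1}] \to \cO(\tld U)^T$. By the top row of the diagram preceding the lemma, its composition with restriction to $\tld U_{s_w}$ is the inclusion
\[\F[c_1,c_1c_2,\ldots,c_1\cdots c_{n-1},(c_1\cdots c_n)^{\pm 1}] \hookrightarrow \F[c_1^{\pm 1},\ldots,c_n^{\pm 1}],\]
namely the subring of Laurent monomials $c^e$ satisfying $e_1 \geq e_2 \geq \cdots \geq e_n$. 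Since $\tld U$ is irreducible by Lemma \ref{large-open}, restriction $\cO(\tld U)^T \hookrightarrow \cO(\tld U_{s_w})^T$ is injective, and this exhibits $\F[x_1,\ldots,x_{n-1},x_n^{\pm 1}]$ as a subring of $\cO(\tld U)^T$.

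For the matching upper bound, given $f \in \cO(\tld U)^T$ restricting to $P(c) = \sum_{e \in \Z^n} P_e c^e$ on $\tld U_{s_w}$, I use the auxiliary charts $\tld{\cB}_k$ as bridges. The bottom row of the diagram produces $f|_{\tld{\cB}_k} \in \cO(\tld{\cB}_k)^T = \F[d_1^{\pm 1},\ldots,d_{k-1}^{\pm 1}, a, (d_kd_{k+1})^{\pm 1}, d_{k+2}^{\pm 1},\ldots,d_n^{\pm 1}]$, so in particular the restriction has no negative powers of $a$. Comparing the restrictions of each $x_i$ on the overlap $\tld{\cB}_k \cap \tld U_{s_w} = \{a \in \bG_m\}$ and taking successive ratios $c_i = x_i/x_{i-1}$ yields the identifications
\[ c_j = d_j \text{ for } j \notin \{k, k+1\}, \qquad c_k = a, \qquad c_{k+1} = -(d_k d_{k+1})/a. \]
Under this substitution, the monomial $c^e$ becomes $(-1)^{e_{k+1}} a^{e_k - e_{k+1}}$ times a monomial in the $d_j$'s (in which $d_k, d_{k+1}$ appear only through $(d_kd_{k+1})^{e_{k+1}}$). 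Grouping by the profile $(e_j)_{j \neq k}$, the coefficient of each $d$-monomial is $\sum_{e_k} P_e\, a^{e_k - e_{k+1}}$; requiring this to be polynomial in $a$ forces $P_e = 0$ whenever $e_k < e_{k+1}$.

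Applying this constraint for each $k \in \{1,\ldots,n-1\}$ shows that every nonzero exponent vector $e$ in $P$ is dominant, so $P \in \F[c_1,c_1c_2,\ldots,c_1\cdots c_{n-1}, (c_1\cdots c_n)^{\pm 1}]$. Combined with the lower bound, this yields $\cO(\tld U)^T \cong \F[x_1,\ldots,x_{n-1},x_n^{\pm 1}]$. The main subtlety lies in computing the identifications $c_k = a$ and $c_{k+1} = -d_k d_{k+1}/a$ on the overlap: these follow by forming ratios of the restriction formulas for $r_{s_ws_k}(x_i)$ from the diagram, and special care is needed for the signs appearing in the $i > k$ case; once these are verified, the remainder of the argument is straightforward comparison of Laurent monomials.
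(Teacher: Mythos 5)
Your proof is correct and follows essentially the same strategy as the paper: both bracket $\cO(\tld{U})^T$ using the injection into $\cO(\tld{U}_{s_w})^T$ and then rule out non-dominant exponent vectors by restricting to the auxiliary charts $\tld{\cB}_k$ and demanding no negative powers of $a$. The only difference is cosmetic --- you work in the coordinates $c_i = x_i/x_{i-1}$ (reading off the dominance condition $e_k \ge e_{k+1}$ directly from the transition $c_k = a$, $c_{k+1} = -d_k d_{k+1}/a$ on the overlap), whereas the paper keeps the $x_i$ and reads off the $a$-exponent $e_k$ from the restriction formulas $r_{s_w s_k}(x_i)$.
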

\begin{proof}
We will write $x_i$ for section in $\cO\left( \left[N \backslash \GL_{n} /T \right] \right)$, and also identify it with its image inside $\cO\left(\left[\widetilde{U}/T\right]\right)$ and $\cO\left(\left[\widetilde{U}_{s_w}/T\right]\right)$. We will also regard the following equalities as being localized at $x_i$. That being said, we have
$$\cO\left(\left[\widetilde{U}_{s_w}/T\right]\right)\cong \F[c_1^{\pm 1},...,c_n^{\pm 1}]= \F[x_1^{\pm 1},...,x_n^{\pm 1}].$$

Since $\cO\left(\left[\widetilde{U}/T\right]\right)\hookrightarrow \cO\left(\left[\widetilde{U}_{s_w}/T\right]\right) $ is injective, any global section $\lambda\in \cO \left( \left[ \widetilde{U} / T \right] \right)$ can be expressed uniquely as \[
    \lambda = \sum_{\underline{i}=(i_1,...,i_n)\in\bZ^{n}} \lambda_{\underline{i}} x_{1}^{i_1}x_{2}^{i_2}...x_{n}^{i_n}.
\]
Let us fix a $k<n$. Then by the above computation \begin{equation*}
    r_{s_ws_k}(\lambda) = \sum_{i_{k}\in \bZ} a^{i_k}\sum_{\underline{i}= \\ (i_1,...,\hat{i}_{k},...i_n)\in\bZ^{n-1}}\pm \lambda_{\underline{i}} d_1^{i_1}...(d_1\cdots d_{k-1})^{i_{k-1}}(d_1\cdots d_{k-1})^{i_k}(d_1\cdots d_{k+1})^{i_{k+1}}...(d_1\cdots d_{n})^{i_n}.
\end{equation*}
Note that the $\pm$ sign comes from the definition of $r_{s_ws_k}$.
We see that for the function to extend, we can only allow non-negative powers of $a^{i_k}$. Therefore we get $\lambda_{\underline{i}}=0$ whenever $i_k<0$ for some $k\neq n$.

If $k=n$, then $x_n^{\pm 1}$ does extend to $\widetilde{U}$, since the $x_n$ is the determinant (up to a sign) and therefore always invertible.
Therefore we get the desired result:
$$\cO\left(\left[\widetilde{U}/T\right]\right)\cong \F[x_1,...,x_{n-1},x_n^{\pm 1}].$$
\end{proof}

\begin{exam}
We continue the example discussed in Example \ref{toy_case}. When $s=(12)$, an element in $ ((I_1 \cap \widetilde{w}^{-1} I_1 \widetilde{w}) \backslash I) s$ is of the form 
      $$  \begin{pmatrix}
             \alpha_0 & c_1 & \delta_0 + \delta_1v \\
             c_2v & 0 & \beta_0 \\
             0 & 0 & 1
        \end{pmatrix} $$

   Rewriting this matrix under the coordinates of $\widetilde{U}_1$ through left multiplication (i.e. making it upper-triangular by row operations), it becomes 
         $$  \begin{pmatrix}
             \alpha_0 & c_1 & \delta_0 + \delta_1v \\
            0 &  -c_1c_2v/\alpha_0 & (\beta-\delta_0c_2/\alpha_0)v \\
             0 & 0 & c_3v^2
        \end{pmatrix} $$
        Imposing the monodromy condition and taking $T$-invariants, we conclude that $\mathcal O(\widetilde U_s)^T=\mathbb F[\alpha_0, -c_1c_2, c_3]$, with $c_1,c_2,c_3$ being invertible. So we can see that this cuts off the possibility of the functions on $\widetilde U$ with $\alpha_0$ also being invertible. Doing the same for $s=(23)$ and $s=(13)$ will cut off more possibilities in Example \ref{toy_case} and finally yield our desired optimal upper bound $\mathbb F[x_1,x_2,x_3^{\pm1}]$.
            \end{exam}

\section{Extendability of functions on \texorpdfstring{$\widetilde U$}{U'}}
\label{section:step3}
This section is a continuation of the previous one; as such, we keep the same assumptions and notation from that section.
Having described the ring of $T$-invariant functions $\cO(\tld{U})^T$ on the open $\tld{U} \subset \tld{\cC}_\sigma$, we will prove in this section that all such functions extend to $\cO(\tld{\cC}_\sigma)^T$, or in other words that the restriction $\cO(\tld{\cC}_\sigma)^T \lrisom \cO(\tld{U})^T$ is an isomorphism.

We will reduce the problem of extending functions to the geometry of Schubert cells, and we refer to Appendix \ref{demazure-resolutions} for results and proofs about (intermediate) Demazure resolutions in what follows. 
Here, we will give an overview of the important results proved therein and how to apply them to our situation to get the desired result. 
We stress that the appendix is using different conventions than the rest of the paper: 
Instead of the affine flag variety $I\setminus LG$, we work with the ``transpose'' affine flag variety $LG/\bar I$, as this simplifies the combinatorics.
More precisely, we get a condition in Proposition \ref{extending-functions-prop}, which in our case is easy to verify via an argument using alcove walks (see Theorem \ref{important}). 
In the final corollary of the appendix we translate the result back to our set-up here.  

\begin{thm}
	\label{main-theorem-base-case}
	The functions $x_1, x_2, \cdots, x_n, x_n^{-1}$ can be extended to $T$-invariant global functions on $\widetilde{\mathcal{C}}_\sigma$.
	\end{thm}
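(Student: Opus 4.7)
\medskip

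\noindent\textbf{Proof proposal.} The plan is to extend each $x_j$ along the closed immersion $\widetilde{\mathcal{C}}_\sigma \hookrightarrow \widetilde{S}(\tld{z}^*, s_w v^\omega)$ rather than work directly with the monodromy locus, since Schubert varieties have a much more tractable geometry. Concretely, I will produce $T$-invariant functions on the ambient Schubert variety $\widetilde{S}(\tld{z}^*, s_w v^\omega)$ whose restriction to $\widetilde{S}^\circ(\tld{z}^*, s_w v^\omega)$ recovers the obvious lifts of $x_1, \ldots, x_{n-1}, x_n^{\pm 1}$ defined in Section \ref{section:step2}. Since the right translation by $v^\omega$ is an isomorphism, I may drop it and work with $\widetilde{S}(\tld{z}^*)$, where $x_j(I_1 \tld{z}^* A) = \det(A_{\leq j \times \leq j}) \bmod v$ for $A \in I$, and the $T$-invariance for the shifted conjugation action follows from the $T$-invariance on the open part already established.

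\medskip

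\noindent The first key step is to invoke normality of Schubert varieties: to extend a function defined on the open cell $\widetilde{S}^\circ(\tld{z}^*)$, it suffices to extend across each codimension one stratum $\widetilde{S}^\circ(\tld{z}')$ indexed by $\tld{z}' < \tld{z}^*$ with $\ell(\tld{z}') = \ell(\tld{z}^*) - 1$. This is where the Demazure machinery of Appendix \ref{demazure-resolutions} enters. For each reduced expression of $\tld{z}^*$ we have a resolution $\widetilde{D}(\tld{z}^*) \to \widetilde{S}(\tld{z}^*)$ restricting to an isomorphism on the open cells, and for each $i \in \{1,\ldots,\ell(\tld{z}^*)\}$ an intermediate resolution $\widetilde{D}^\circ(\tld{z}^*, i)$ which is a partial compactification of $\widetilde{D}^\circ(\tld{z}^*)$ mapping onto $\widetilde{S}^\circ(\tld{z}^*) \cup \widetilde{S}^\circ(\tld{z}')$ for the codimension-one neighbor $\tld{z}'$ obtained by deleting the $i$-th simple reflection. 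By Lemma \ref{extend}, extension across all codimension one boundary strata on the Schubert side reduces to extension along each open embedding $\widetilde{D}^\circ(\tld{z}^*) \hookrightarrow \widetilde{D}^\circ(\tld{z}^*, i)$.

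\medskip

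\noindent The second step is to do this extension locally on the intermediate Demazure resolution. I would cover $\widetilde{D}^\circ(\tld{z}^*, i)$ by two charts analogous to the standard cover $\bA^1 \cup \bA^1 = \bP^1$, namely $\widetilde{D}^\circ(\tld{z}^*)$ and a translate obtained by acting on the $i$-th factor by the corresponding affine reflection. The transition function on the overlap can then be computed explicitly (this is the content of Lemma \ref{function-in-other-chart}), and the criterion for $f'$ to extend across the new boundary becomes an inequality: the exponents appearing in the transformed expression of $f'$ must remain non-negative along the complement. For the functions $x_j$ (which are leading minors), this translates into an inequality governed by the combinatorics of the affine Weyl group action on roots, and this is precisely Theorem \ref{important} together with its reinterpretation in Corollary \ref{extending-minors-in-general}.

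\medskip

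\noindent The hard part of the proof, and the reason the genericity hypothesis enters so forcefully, is verifying that inequality for every reduced expression of $\tld{z}^*$ and every simple reflection $s_i$ in it. The estimate must hold uniformly for all codimension one neighbors $\tld{z}'$, and the non-negativity amounts to a pairing between an explicit root datum and the highest weight packaged by $\omega$. I would handle this by induction on the length of the reduced expression, carefully tracking how each minor $x_j$ transforms under passage between adjacent cells, using Lemma \ref{conjugated-iwahori-description} to keep control of valuation bounds along the way. Once Theorem \ref{important} is established, the extension of $x_j$ to $\widetilde{S}(\tld{z}^*)$ follows formally, and restricting along the closed immersion $\widetilde{\mathcal{C}}_\sigma \hookrightarrow \widetilde{S}(\tld{z}^*, s_w v^\omega)$ yields the required $T$-invariant global functions, completing the proof.
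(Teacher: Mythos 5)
Your high-level strategy is the same as the paper's: pass from $\widetilde{\mathcal{C}}_\sigma$ to the ambient Schubert variety $\widetilde{S}(\tld{z}^*)$, use normality to reduce to extending across codimension-one strata, lift the problem to the Demazure resolution via Lemma \ref{extend}, cover the intermediate resolution $\widetilde{D}^\circ(\tld{z}^*,i)$ by two $\bA^1$-type charts, and reduce extendability to a non-negativity inequality (Proposition \ref{extending-functions-prop}), invoking Theorem \ref{important} and Corollary \ref{extending-minors-in-general}. That skeleton is right, and your observation that $T$-invariance is automatic because it holds on the dense open $\tld{U}_{s_w}$ matches the paper's final step. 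However, there are two substantive misreadings of what drives the key computation.

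First, you attribute the inequality's validity to the genericity hypothesis on $\sigma$, writing that "the genericity hypothesis enters so forcefully" into Theorem \ref{important}. This is not correct. The $(3n-1)$-deep hypothesis is consumed entirely by Theorem \ref{presentation-of-EG-component} (the identification $\cX(\sigma)\cong[\tld{\cC}_\sigma/T]$); it plays no role in Theorem \ref{important}. The hypothesis of Theorem \ref{important} is that $xC_0$ is antidominant (or lies below $C_0$ in Jantzen's $\uparrow$-ordering), which holds for $x=\tld{z}^*$ purely by construction, since $\tld{w}\in\underline{\widetilde{W}}_1^+$ forces $\tld{z}=\textrm{w}_0\tld{w}$ to label an anti-$p$-restricted alcove. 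Relatedly, you say the non-negativity "amounts to a pairing between an explicit root datum and the highest weight packaged by $\omega$"; but the translation by $v^\omega$ is dropped at the outset and the inequality \eqref{eq:not-so-mysterious-condition}, namely $s_1\cdots s_i(\bar\alpha_i^\vee)-\eta\in\bR_{\geq 0}\cdot\Phi^{\vee,+}$ with $\eta$ the translation part of $x=t_\eta x_0$, does not involve the lowest alcove presentation data $\omega$ at all. The $\omega_j$ appearing via $\lambda_j=\lambda_{x_0^{-1}\omega_j}$ is a fundamental weight, a different object, and conflating the two is an error.

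Second, the place where your proposal is genuinely incomplete is the proof of the inequality itself. You suggest "induction on the length of the reduced expression, carefully tracking how each minor $x_j$ transforms ... using Lemma \ref{conjugated-iwahori-description} to keep control of valuation bounds." That lemma does not appear in the proof, and an induction on valuation bounds is not what makes this work. The paper's proof of Theorem \ref{important} is a short geometric argument: for each simple reflection $s$, the vertex $v_s$ of the dominant base alcove opposite the $s$-wall satisfies $s(v_s)-v_s=\bar\alpha_s^\vee$, and decomposing $s_1\cdots s_i(\bar\alpha_i^\vee)-\eta$ via this identity splits it into a term in $\Phi^{\vee,+}$ (because the alcove walk $s_1,\dots,s_k$ for $x$ is downward) plus a dominant term $s_1\cdots s_i(0)-\eta$. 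Without this observation, there is no argument, so your proposal as written has a real gap precisely at the crux of the problem. Also note you need not check every reduced expression of $\tld{z}^*$: a single reduced expression already has the property that every codimension-one neighbor arises as some $x_i=s_1\cdots\hat{s}_i\cdots s_k$, which is what the paper uses.
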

	\begin{proof}
	
	\begin{enumerate}
		\item We first reduce this problem to Schubert varieties. Consider the following commutative diagram  
		\[\begin{tikzcd}
			{\tld{U}_{s_w}} &\widetilde{\mathcal{C}}_\sigma \\
			{\widetilde{S}^\circ(\tld{z}^*,s_wt_\omega)} & {\widetilde{S}(\tld{z}^*,s_wt_\omega).}
			\arrow[hook, from=1-1, to=1-2]
			\arrow[hook, from=1-2, to=2-2]
			\arrow[hook, from=1-1, to=2-1]
			\arrow[hook, from=2-1, to=2-2]
		\end{tikzcd}\]
		Here, the horizontal arrows are open embeddings and the vertical arrows are closed immersions, induced by intersection with the monodromy condition.\footnote{The inclusion $\tld{\cC}_\sigma = \tld{S}^{\nabla_0}(\tld{z}^*,t_\omega) \subset \tld{S}(\tld{z}^*,t_\omega)^{\nabla_0}$ is typically not an equality, as mentioned in Remark \ref{schubert-cell-discrepancy}. Nevertheless, the vertical maps are closed immersions, which suffices for our argument.}
		Given a $T$-invariant function $f$ on $\tld{U}_{s_w}$, there is an obvious choice for a lift $f_{\widetilde{S}^\circ(\tld{z}^*,s_w t_\omega)}$.
		It suffices to extend the function $f_{\widetilde{S}^\circ(\tld{z}^*,s_w t_\omega)}$ to a function $f_{\widetilde{S}(\tld{z}^*,s_w t_\omega)}$ on the whole space $\widetilde{S}(\tld{z}^*,s_w t_\omega)$. Then the restriction of $f_{\widetilde{S}(\tld{z}^*,t_\omega)}$ to $\widetilde{\mathcal{C}}_\sigma$ will be our desired element. Note that the right shift by $s_w t_\omega$ is irrelevant for this question, so from now on we will drop it.

		\item Now we reduce the problem to extending functions along certain strata in the Schubert cell. First, according to \cite[Theorem 0.3]{pappas-rapoport} $\tld{S}(\tld{z}^*)$ is normal. Therefore, it suffices to construct global functions on open subschemes of codimension $\geq 2$ by the algebraic version of the Hartog's lemma (see \cite[Tag 031T]{stacks-project}).
		We can use the stratification 
		$$ \widetilde{S}(\tld{z}^*)=\widetilde{S}^\circ(\tld{z}^*)\cup \bigcup_{\substack{\tld{z}'\in \cI}} \widetilde{S}^\circ(\tld{z}') \cup \bigcup_{\substack{\tld{z}'\in \cI'}} \widetilde{S}^\circ(\tld{z}'), $$
		where $\cI=\{\tld{z}' | \tld{z}'<\tld{z}^*\ , \   l(\tld{z}')=l(\tld{z}^*)-1 \}, \cI'=\{\tld{z}' | \tld{z}'<\tld{z}^* \ , \  l(\tld{z}')<l(\tld{z}^*)-1 \}$. Note that $\dim_\bF \widetilde{S}^\circ(\tld{z})=l(\tld z) + n$.
		Thus in order to extend $f_{\widetilde{S}^\circ(\tld{z}^*)}$ to $\widetilde{S}(\tld{z}^*)$, it suffices to show that the function extends across the codimension 1 strata $\widetilde{S}^\circ(\tld{z}')$, where $\tld{z}' \in \cI$.
		
		\item To extend functions along the codimension one strata, we use Demazure resolutions $\tld{D}^\circ(\tld{z})$ and ``intermediate'' Demazure resolutions $\tld{D}^\circ(\tld{z}^*,i)$ which correspond to $\tld{S}^\circ(\tld{z}^*) \cup \tld{S}^\circ(\tld{z}')$ (see Lemma \ref{pullback}).
		We will use Lemma \ref{extend} to reduce to the question of when a global section extends along the open embedding 
		\[\begin{tikzcd}
			{\widetilde{D}^\circ(\tld{z}^*)} & {\widetilde{D}^\circ(\tld{z}^*,i)} \\
			{\bA^1.}
			\arrow[hook, from=1-1, to=1-2]
			\arrow[ from=1-1, to=2-1]
			\arrow[dashed, from=1-2, to=2-1]
		\end{tikzcd}.\]
		This yields a general criterion stated in Proposition \ref{extending-functions-prop} (note that the open Demazure resolution is isomorphic to the open Schubert cell). 
		
		\item The functions  $x_j$ on the open Schubert cell $I_1 \backslash I_1 \tld{z}^* I$ that we want to extend are given by 
		\begin{align*}
		x_j : I_1 \backslash I_1 \tld{z}^* I & \to \bA^1 \\
		I_1 \tld{z}^* A& \mapsto \text{upper left }(j \times j)\text{-minor of }A \mod v .
		\end{align*}
		That these can be extended is exactly the content of Corollary \ref{extending-minors-in-general} (note again that we use the ``transpose" convention in the appendix, hence the difference in the notation for the function $x_j$). To be more precise, the core computation that the general extension criterion (Proposition \ref{extending-functions-prop}) is fulfilled for the $x_j$'s is the Theorem \ref{important}. The corollary afterwards is just translating back the conventions to the ones we use in the rest of the paper.

		\item We have seen that $x_1,...,x_n$ extend to $\widetilde{S}(\tld{z}^*)$, hence we can (after translation by $s_w t_\omega)$ restrict them to $\widetilde{\mathcal{C}}_\sigma$.  Since $x_n: \widetilde{\mathcal{C}}_\sigma\to \bA^1$ factors through $\bG_m\subset \bA^1$, we can also extend $x_n^{-1}$. 
		Finally, all the functions are $T$-invariant on $\widetilde{\mathcal{C}}_\sigma$, since they are $T$-invariant on a dense open, namely $\tld{U}_{s_w}$.
	\end{enumerate}

	\end{proof}

\begin{exam}
	We continue the example discussed in Example \ref{toy_case}. Thus $\tld{z}^*=\mathrm{Diag}(1, v, v^2)$ and we know that $\mathcal I=\{\tld{z}'_1,\tld{z}'_2,\tld{z}'_3 \}$, where $$\tld{z}'_1=\begin{pmatrix}
		0 & 0 & v\\
		0 & v & 0\\
		v & 0 & 0
				\end{pmatrix}, \tld{z}'_2=\begin{pmatrix}
		0 & 1 & 0\\
		v &  0 & 0\\
		0  & 0 & v^2
				\end{pmatrix}, \tld{z}'_3=\begin{pmatrix}
		1 & 0 & 0\\
		0 &  0 & v\\
		0 & v^2 & 0
				\end{pmatrix} $$
		We draw them on the following \hyperref[picture]{picture}.
		\newline
		
		\begin{figure}[h!]
		\centering
		 \begin{tikzpicture}[scale=6]  \AutoSizeWeightLatticefalse
		\begin{rootSystem}{G}
		\setlength{\weightRadius}{1pt}
		\weightLattice{2}
		\roots[brown]
		\wt[root]{0}{0}
		\node[below left=1pt] at (hex cs:x=-0.16,y=0.8){\scalebox{0.55}{$
		 \begin{pmatrix}
		v & 0 & 0\\
		0 & v & 0\\
		0 & 0 & v\\
		\end{pmatrix}$}};
		\node[above right=1pt] at (hex cs:x=0.1,y=-0.8){\scalebox{0.55}{$
		\tilde z_1'=\begin{pmatrix}
		0 & 0 & v\\
		0 & v & 0\\
		v & 0 & 0
		\end{pmatrix}
		$}};
		\node[above right=1pt] at (hex cs:x=-0.95,y=-0.8){\scalebox{0.55}{$
		\tilde z_3' =\begin{pmatrix}
		1 & 0 & 0\\
		0 &  0 & v\\
		0 & v^2 & 0
		\end{pmatrix}$}};
		
		\node[above right=1pt] at (hex cs:x=1.05,y=-0.8){\scalebox{0.55}{$
		\tilde z_2'= \begin{pmatrix}
		0 & 1 & 0\\
		v & 0 & 0\\
		0 & 0 & v^2
		\end{pmatrix}$}};
		
		\node[below left=1pt] at (hex cs:x=0.93,y=-1.2){\scalebox{0.55}{$
		\tilde z^*=\begin{pmatrix}
		1 & 0 & 0\\
		0 & v & 0\\
		0 & 0 & v^2
		\end{pmatrix}$}};
		\end{rootSystem}
		\end{tikzpicture}
		\label{picture}
		\caption{A partial visualization of affine Weyl group of $\GL_3$. More precisely we can think of the extended affine Weyl group $\tilde{W}\cong W_{a}\times \Z$ as
		copies of $W_{a}$, the extended affine Weyl group of $\SL_3$, indexed by $\Z$. If we fix an $\ell\in\Z$ (in our figure $\ell=3$, because $\det(\tilde z^*)=v^3$ has order $3$), we can use the isomorphism between $W_{a}$
		and the alcoves of the standard apartment of $\SL_3$ to visualize the Bruhat order. Elements which are smaller and of length $l(\tilde z^*)-1$ correspond to the
		alcoves which are obtained by exactly one reflection along the walls corresponding to the omitted element in the reduced expression.}
		\end{figure}
\end{exam}

\section{The general case}
\label{section:general-case}

We now explain how to extend the previous calculations to the case of an unramified extension $K = \Q_{p^f}$ where $f > 1$.
Recall the setup in Section \ref{section:presentation-of-the-irreducible-components}.
Namely, $\sigma$ denotes a $(3n-1)$-deep Serre weight for $\GL_n(\cO_K)$ with lowest alcove presentation $(\widetilde{w},\omega)$ and $\tld{z}^* = (\textrm{w}_0 \tld{w})^*$.
There exists a closed irreducible subscheme $\widetilde{\cC}_\sigma \subset \Fl_\cJ^{\nabla_0}$ on which the torus $T^\cJ$ acts by shifted conjugation, and for which we have an isomorphism $\cX(\sigma) \cong \left[ \widetilde{\cC}_\sigma / T^\cJ \right]$, where the left hand side is the irreducible component of the Emerton-Gee stack labeled by $\sigma$. 
The goal of this section is to prove the following theorem.
\begin{thm}
\label{main-theorem}
We have an isomorphism $$\cO(\tld{\cC}_\sigma)^{T^\cJ} = \F[x_1,x_2,\dots,x_{n-1},x_{n}^{\pm 1}] ,$$ 
where the right hand side denotes the free polynomial ring in $n$ variables with one variable inverted.
\end{thm}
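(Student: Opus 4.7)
The plan is to mirror the strategy of Sections \ref{section:upper-bounds}--\ref{section:step3} factor by factor on $\cJ$, with the principal new ingredient being that the shifted conjugation action of $T^\cJ$ on itself is no longer trivial, and invariants only arise after telescoping around the cyclic $\varphi$-orbit on $\cJ$.

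First, I would produce candidate generators. For each $\iota \in \cJ$ and $1 \leq i \leq n$, let $x_{i,\iota}$ be the function on $\tld{U}$ sending $I_1 \tld{z}_\iota^*(B_\iota s_{w,\iota}) v^{\omega_\iota}$ to the upper-left $(i \times i)$-minor of $B_\iota \bmod v$; this is the $\iota$-factor analog of the function $x_i$ from Section \ref{section:step2}. Individually $x_{i,\iota}$ is not $T^\cJ$-invariant, but a short computation with \eqref{eq:easy-s-shifted-conjugation} shows that the characters by which the $x_{i,\iota}$ transform under shifted conjugation telescope across the $\varphi$-orbit, so that the product $X_i \defeq \prod_{\iota \in \cJ} x_{i,\iota}$ is $T^\cJ$-invariant.

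Next I would establish the upper bound $\cO(\tld{U})^{T^\cJ} \subseteq \F[X_1,\ldots,X_{n-1},X_n^{\pm 1}]$. Following Lemma \ref{simple-upper-bound}, the isomorphism $\tld{U}_{s_w} \cong T^\cJ \times \cA_{s_w}^\cJ$ of Lemma \ref{upper-triangular-coordinates2} decouples invariants into two pieces: those on the unipotent factor remain $\F$ by a weight computation, while those on $T^\cJ$ with respect to $(D \cdot D')_\iota = D_\iota D'_\iota D_{\iota \circ \varphi}^{-1}$ are generated by the $\varphi$-orbit products $c_i \defeq \prod_\iota c_{\iota,i}$. Hence $\cO(\tld{U}_{s_w})^{T^\cJ} = \F[c_1^{\pm 1},\ldots,c_n^{\pm 1}]$, with $X_i|_{\tld{U}_{s_w}} = c_1 c_2 \cdots c_i$. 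To upgrade this to the optimal bound on $\tld{U}$, I would repeat the argument of Lemma \ref{step2} one factor at a time, using auxiliary charts of the form $\tld{\cB}_k \times \prod_{\iota' \neq \iota} T$: restriction to such a chart reveals that a $T^\cJ$-invariant function on $\tld{U}_{s_w}$ which extends to $\tld{U}$ must contain only nonnegative powers of $c_k$ for each $k < n$ and each $\iota$, forcing the containment.

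Finally, extendability from $\tld{U}$ to $\tld{\cC}_\sigma$ follows from the base case. The closed Schubert variety $\tld{S}(\tld{z}^*, s_w v^\omega)$ is the product $\prod_\iota \tld{S}(\tld{z}_\iota^*, s_{w,\iota} v^{\omega_\iota})$ of Schubert varieties in the individual factors of $\tld{\Fl}_\cJ$, and each $x_{i,\iota}$ is by construction the pullback of a function along the $\iota$-th projection. Theorem \ref{main-theorem-base-case}, applied to that factor alone, extends $x_{i,\iota}$ across the entire Schubert variety in that factor, so the product $X_i$ extends to the full product; restriction to $\tld{\cC}_\sigma \subseteq \tld{S}(\tld{z}^*, s_w v^\omega)^{\nabla_0}$ then gives the desired global function, and $T^\cJ$-invariance is automatic from its validity on the dense open $\tld{U}_{s_w}$. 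The main obstacle I anticipate is not conceptual but organizational: tracking the twisting by $\varphi$ in the shifted conjugation action, and verifying rigorously that the orbit-averaging step in the upper-bound computation yields no invariants beyond the $X_i$ and their inverse for $i = n$.
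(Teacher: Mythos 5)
Your proposal follows the paper's strategy closely: candidate generators $X_i = \prod_{\iota} x_{i,\iota}$ built by telescoping over $\cJ$, an upper bound on $\tld{U}_{s_w}$ by weight computation (the paper's Lemma~\ref{lemma:functions-on-open-general-case}), an optimal bound on $\tld{U}$ via restriction to auxiliary charts, and extendability obtained factor-by-factor. The one genuine departure is your choice of auxiliary charts. You propose mixed charts with the $\tld{\cB}_k$-type structure in a single factor $\iota$ and the torus $T$ in all remaining factors, while the paper uses the full product $\tld{\cB}_k = \prod_j (\tld{\cB}_k)_j$ of Lemma~\ref{step1.5-general}. Your variant is indeed $T^\cJ$-stable (it sits inside $\tld{U}_{s'}$ for $s'$ having the $s_k$-twist in a single factor) and gives the same constraint $i_k \geq 0$; moreover, it has the pleasant side effect of eliminating the parity case distinction in Lemma~\ref{step1.5-general}. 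In the paper's product chart the constraint chain $m_{j,k+1} = m_{j+1,k}$ wraps around the $\varphi$-cycle, and whether it collapses to one condition or two depends on whether $\#\cJ$ is odd or even; with the twist in only one factor, the chain never wraps, and the invariant ring has the same shape as in the $f=1$ case regardless of parity. One small imprecision: for extending each $x_{i,\iota}$ across the Schubert variety in a single factor, the precise reference is Corollary~\ref{extending-minors-in-general}, not Theorem~\ref{main-theorem-base-case} (the latter concerns $\tld{\cC}_\sigma$ in the $f=1$ case, though its proof does establish extension to the Schubert variety along the way).
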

Concrete interpretations of the functions $x_i$ on open dense subschemes of $\tld{\cC}_\sigma$ is provided in Equations \eqref{eq:defn-of-xs} and \eqref{eq:defn-of-x}.
The rest of this section is dedicated to the proof of Theorem \ref{main-theorem}.

The strategy is exactly as before. Rather than repeating the idea we will content ourselves with making a few remarks about the similarities and differences between the base case $f=1$ and the general case $f>1$ under consideration. 
Note that the proof of Lemma \ref{lemma:functions-on-open-general-case}, which computes $\cO(\tld{U}_{s_w})^{T^\cJ}$, is more involved than the proof of Lemma \ref{simple-upper-bound}, which does the same computation in the case $f=1$. Fundamentally, the reason is that the action of $T^\cJ$ on itself by shifted conjugation is non-trivial, whereas the conjugation action of $T$ on itself is trivial. Similarly, the proof of Lemma \ref{step1.5-general} is more difficult than the proof of Lemma \ref{step1.5}. In the end, the computation of $T^\cJ$-invariant functions on $\tld{U}$ is done exactly as before.
The question of extending functions to $\tld{\cC}_\sigma$ reduces to the case $f=1$.

We now make some preparations needed for the proofs.
Let $\iota_0 : k \hookrightarrow \F$ be an embedding, from which we obtain an enumeration $\lbrace \iota_0,\iota_0 \circ \varphi^{-1},\dots,\iota_0 \circ \varphi^{-f+1} \rbrace$ of all embeddings $k \hookrightarrow \F$. By abuse of notations, the $j$-th embedding $\iota_0 \circ \varphi^{-j}$ will also be denoted as $j:k \hookrightarrow \bF$.
Then we can write
$$ \tld{U} = \prod_{j=0}^{f-1} \tld{U}_j = \prod_{j=0}^{f-1} I_1 \backslash I_1 \tld{z}^*_j \cK v^{\omega_j}, \,\,\,\,\, \text{ where }\cK = L^+ \GL_n .$$
For any $s \in \underline{W}$, we can also write
$$ \widetilde{U}_s = \prod_{j=0}^{f-1} \widetilde{U}_{s,j} , \,\,\,\,\, \text{ where } \widetilde{U}_{s,j} = \tld{S}^\circ(\tld{z}^*_j,s_jt_{\omega_j})^{\nabla_0} = I_1 \backslash I_1 \tld{z}^*_j I s_j v^{\omega_j} \cap \widetilde{\Fl}^{\nabla_0} . $$
Recall that these open charts were studied in Section \ref{section:open-charts}.

As we did in Section \ref{section:step2}, we can name certain candidate functions on $\tld{U}$.
Recall $s_w \in \underline{W} \cong \prod_{j=0}^{f-1} W$ from Definition \ref{easy-s}, namely $(s_w)_j = \textrm{w}_0 w_{j-1}$.\footnote{Note the difference between $\textrm{w}_0$, the longest Weyl group element, and $w_0$ the $0$'th component of $w$.}
For $1 \leq i \leq n$ and $0 \leq j \leq f-1$, let $x_{j,i}$ be the function
\begin{align}
x_{j,i} : \tld{U} & \to \bA^1 \label{eq:defn-of-xs} \\
I_1 \tld{z}^* A s_w v^\omega & \mapsto \text{upper left }(i \times i)\text{-minor of }A_j \mod v , \nonumber
\end{align}
where we use the shorthand notation
$$ I_1 \tld{z}^* A s_w v^\omega = \left( I_1 \tld{z}^*_0 (A_0 s_{w,0}) v^{\omega_0}, I_1 \tld{z}^*_1 (A_1 s_{w,1}) v^{\omega_1}, \dots , I_1 \tld{z}^*_{f-1} (A_{f-1} s_{w,f-1}) v^{\omega_{f-1}} \right) . $$
The function $x_{j,i}$ is well-defined by the same argument as in Section \ref{section:step2}.

It is very important to observe the presence of $s_w$ on the left hand side of \eqref{eq:defn-of-xs}.
As a consequence, if $\tld{\cA}_{s_w}$ denotes the upper triangular coordinates on $\tld{U}_{s_w}$ from Lemma \ref{upper-triangular-coordinates2}, then $x_{j,i}|_{\tld{U}_{s_w}}$ as a function on $\tld{\cA}_{s_w}$ has the same description as the upper $(i \times i)$-minor of the $j$-th component modulo $v$.
Note that since $\tld{\cA}_{s_w}$ is upper triangular by definition, this is the same as the product of the first $i$ diagonal entries of the $j$-th component modulo $v$.

For $1 \leq i \leq n$, let
\begin{equation}
\label{eq:defn-of-x}
x_i \defeq x_{0,i} x_{1,i} \cdots x_{f-1,i} .
\end{equation}
\begin{lemma}
\label{lemma:functions-on-open-general-case}
Let $s = s_w \in \underline{W}$ be the element defined in Definition \ref{easy-s}, i.e. $s_j = \textrm{w}_0 w_{j-1}$.
Then $\cO(\widetilde{U}_s)^{T^\cJ} = \F[x_1^{\pm 1},x_2^{\pm 1},\dots,x_{n}^{\pm 1}]$.
\end{lemma}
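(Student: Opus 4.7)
The plan is to extend the proof of Lemma \ref{simple-upper-bound} by an explicit weight computation that handles the nontrivial shifted conjugation action of $T^\cJ$ on itself. Using Lemma \ref{upper-triangular-coordinates2}, identify $\widetilde{U}_{s_w}$ with $\tld{\cA}_{s_w} \subset \tld{\cA} \subset L^+ B^\cJ$; since the transported action \eqref{eq:easy-s-shifted-conjugation} differs from the standard shifted conjugation $(D \cdot A)_j = D_j A_j D_{j-1}^{-1}$ only by precomposition with the automorphism $\Ad_{\textrm{w}_0 w}$ of $T^\cJ$, the two have identical invariant rings, and I will compute using shifted conjugation. Under the valuation bounds of Lemma \ref{upper-triangular-coordinates}, $\tld{\cA}$ is parametrized by the invertible diagonal entries $c_{j,k}$ together with the coefficients of the strictly upper-triangular entries; each is a $T^\cJ$-weight vector, with $c_{j,k}$ carrying weight $\epsilon_{j,k} - \epsilon_{j-1,k}$ and every coefficient at position $(k,l)$ with $k < l$ in component $j$ carrying weight $\epsilon_{j,k} - \epsilon_{j-1,l}$, where $\epsilon_{j,k}$ sends $D \in T^\cJ$ to $D_{j,k}$.

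The heart of the argument is to show $\cO(\tld{\cA})^{T^\cJ} = \F[T_1^{\pm 1}, \ldots, T_n^{\pm 1}]$, where $T_k \defeq \prod_j c_{j,k}$. The invariants are spanned by monomials of total weight zero, with integer exponents $a_{j,k}$ on the $c_{j,k}$'s and nonnegative exponents on the off-diagonal coordinates. Letting $N_{j,k}$ and $M_{j,k}$ denote, respectively, the total exponent in component $j$ of off-diagonal coordinates with row index $k$ and with column index $k$, the vanishing of the coefficient of $\epsilon_{j,k}$ in the total weight gives
\begin{equation*}
a_{j,k} - a_{j+1,k} + N_{j,k} - M_{j+1,k} = 0.
\end{equation*}
Summing cyclically over $j$ yields $\sum_j N_{j,k} = \sum_j M_{j,k}$ for each $k$. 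An induction on $k$ starting from $k = 1$ (where $M_{j,1} = 0$ by upper-triangularity) forces all off-diagonal exponents to vanish by nonnegativity, and the residual condition $a_{j,k} = a_{j+1,k}$ then identifies the diagonal invariants with the ring generated by the $T_k^{\pm 1}$.

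To descend from $\tld{\cA}$ to $\tld{\cA}_{s_w}$, note that since $T^\cJ$ is linearly reductive, the closed immersion $\tld{\cA}_{s_w} \hookrightarrow \tld{\cA}$ induces a surjection $\F[T_1^{\pm 1}, \ldots, T_n^{\pm 1}] \twoheadrightarrow \cO(\tld{\cA}_{s_w})^{T^\cJ}$. Conversely, the constant-diagonal map $\sigma : T^\cJ \to \tld{\cA}$, $D \mapsto (D_j)_j$, is a $T^\cJ$-equivariant section of the projection to the diagonal, and lands in $\tld{\cA}_{s_w}$: for $A = D$ constant diagonal, $vA'A^{-1} + A\operatorname{diag}(s_w\omega)A^{-1} = \operatorname{diag}(s_w\omega)$ is a constant diagonal matrix, which lies in $\tfrac{1}{v}\Lie\Ad_{\tld{z}^{-*}}(I^\cJ)$ thanks to the $[-1,\infty]$ diagonal bounds from Lemma \ref{conjugated-iwahori-description}. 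The composition $\cO(\tld{\cA})^{T^\cJ} \twoheadrightarrow \cO(\tld{\cA}_{s_w})^{T^\cJ} \xrightarrow{\sigma^*} \cO(T^\cJ)^{T^\cJ}$ is the identity on the $T_k$'s, so the first surjection is an isomorphism. The identification $x_i|_{\tld{\cA}_{s_w}} = \prod_{k=1}^i T_k$ (since the upper-left $(i \times i)$ minor of an upper-triangular matrix modulo $v$ is the product of its first $i$ diagonal entries) then yields the desired presentation $\F[x_1^{\pm 1}, \ldots, x_n^{\pm 1}]$.

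The main obstacle is the combinatorial weight argument: the key point is that the directed graph of off-diagonal positions is acyclic, so the conservation-of-flow condition $\sum_j N_{j,k} = \sum_j M_{j,k}$ together with the nonnegativity of exponents forces every off-diagonal contribution to vanish from any invariant monomial. Once this is established, verifying the monodromy condition on constant diagonal matrices and chasing the surjection to conclude the isomorphism are essentially formal.
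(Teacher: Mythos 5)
Your proof is correct, and its overall architecture mirrors the paper's: compute $T^\cJ$-invariants on a larger affine space containing $\tld{\cA}_{s_w}$ by a weight argument, then use linear reductivity to sandwich $\tld{\cA}_{s_w}$ between the big space and $T^\cJ$ itself. Where you diverge from the paper is in the combinatorial core. The paper proves the weight-zero constraint on $T^\cJ L^+ U^\cJ$ by pairing the total weight against a cleverly chosen coweight $\eta = \sum_k (n-k) \sum_j \cE^\vee_{j,k}$, against which every strictly-upper-triangular root $\chi_{j,k,l}$ has strictly positive pairing and every diagonal $\chi_{j,i,i}$ pairs to zero; nonnegativity of the off-diagonal exponents then kills them all at once. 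You instead sum the vanishing of the $\epsilon_{j,k}$-coefficient cyclically over $j$ to obtain the flow-conservation identity $\sum_j N_{j,k} = \sum_j M_{j,k}$, and then exploit the acyclicity of the upper-triangular position graph by inducting on $k$ (starting from $M_{j,1}=0$). Both routes are valid; the $\eta$-pairing is shorter once the coweight is produced, while your cyclic-sum-plus-induction argument is more mechanical and requires no choice. Two further small differences: you compute invariants on $\tld{\cA}$ rather than the larger $T^\cJ L^+U^\cJ$ (equivalent, via the sandwich), and you explicitly verify that constant diagonal matrices satisfy the monodromy condition $\nabla_{\tld{z}^*,s\omega}$ so that $T^\cJ$ really does sit inside $\tld{\cA}_{s_w}$ — a check the paper leaves implicit when it asserts $T^\cJ \subset \tld{\cA}_s$.
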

\begin{proof}
Let $\tld{\cA}$ and $\tld{\cA}_s$ be the upper triangular coordinates from Lemma \ref{upper-triangular-coordinates} and Lemma \ref{upper-triangular-coordinates2}, respectively.
Then $T^\cJ \subset \tld{\cA}_s \subset \tld{\cA} \subset T^\cJ L^+ U$ are closed subschemes, so it is sufficient to prove that $\cO(T^\cJ L^+U)^{T^\cJ} \cong \cO(T^\cJ)^{T^\cJ} \cong \F[x_1^{\pm 1} , x_2^{\pm 1}, \dots, x_n^{\pm 1}]$.
We will do this by explicit computation.

The shifted conjugation action of $T^\cJ$ on $\tld{U}_s$ corresponds to an action on $\tld{\cA}_s$ described by \eqref{eq:shifted-conjugation-on-upper-triangular-coordinates}, and the action on $T^\cJ L^+U^\cJ$ is given by the same formula. For our chosen $s$, this action is given by
\begin{align*} (D \cdot A)_j & = \Ad_{\textrm{w}_0 w_j} (D_j) A_j \Ad_{s_j}(D_{j-1})^{-1} & \\
& = \Ad_{\textrm{w}_0 w_j} (D_j) A_j \Ad_{\textrm{w}_0 w_{j-1}}(D_{j-1})^{-1} & \text{ for } (D,A) \in T^\cJ \times T^\cJ L^+ U^\cJ .
\end{align*}
In other words, up to the automorphism $\Ad_{\textrm{w}_0 w}$ of $T^\cJ$, the action is literally given by the shifted conjugation action 
\begin{equation}
\label{literally-shifted-conjugation}
 (D \cdot A)_j = D_j A_j D_{j-1}^{-1} \,\,\,\,\, \text{ for } (D,A) \in T^\cJ \times T^\cJ L^+U^\cJ .
\end{equation} 
Because the weight space for the action of $T^\cJ$ on $\cO(T^\cJ L^+ U)$ corresponding to weight 0 is insensitive to such an automorphism, we may and will assume that the action is given by \eqref{literally-shifted-conjugation}.

Note that $$ \cO(T^{\cJ} L^+ U^\cJ) = \F[c_{j,i}^{\pm 1}, a_{j,k,l,m} | 0 \leq j \leq f-1, 1 \leq i \leq n,  1 \leq k < l \leq n, m \in \Z_{\geq 0} ] , $$
where
\begin{align*}
c_{j,i}(A_0,A_1,\dots,A_{f-1}) & \defeq \text{the }(i,i)\text{'th entry of }A_j; \\
a_{j,k,l,m}(A_0,A_1,\dots,A_{f-1}) & \defeq \text{the }v^m\text{'th coefficient of the }(k,l)\text{'th entry of }A_j.
\end{align*}
The shifted conjugation action of $D = (D_0,D_1,\dots,D_{f-1}) \in T^\cJ$ on these functions is given by
\begin{align*}
D \cdot c_{j,i} & = D_{j,i}D_{j-1,i}^{-1} c_{j,i} = \chi_{j,i,i}(D) c_{j,i} ; \\
D \cdot a_{j,k,l,m} & = D_{j,k} D_{j-1,l}^{-1} a_{j,k,l,m} = \chi_{j,k,l}(D) a_{j,k,l,m} ,
\end{align*}
where $\chi_{j,k,l}(D) \defeq D_{j,k} D_{j-1,l}^{-1}$.

The monomials
\begin{equation}
\label{monomial}
\prod_{j,i,k,l,m} c_{j,i}^{m_{j,i}} a_{j,k,l,m}^{n_{j,k,l,m}}
\end{equation}
with $m_{j,i} \in \Z$ and $n_{j,k,l,m} \in \Z_{\geq 0}$ span the ring of functions and are eigenvectors for the shifted conjugation action. The weight of \eqref{monomial} is precisely
\begin{equation}
\label{weight}
\sum_{j,i,k,l,m} m_{j,i} \chi_{j,i,i} + n_{j,k,l,m} \chi_{j,k,l} .
\end{equation}
The statement we want to prove is equivalent to the assertion that if the monomial \ref{monomial} corresponds to weight 0, then $m_{0,i} = m_{1,i} = \cdots = m_{f-1,i}$ for all $i$ and all $n_{j,k,l,m} = 0$.

For $0 \leq j \leq f-1$ and $1 \leq i \leq n$ let $\cE_{j,i}(D) = D_{j,i}$ be the character of $X^*(T^\cJ)$ which picks out the $i$'th entry of the $j$'th factor. Then the $\cE_{j,i}$ is a basis for $X^*(T^\cJ)$ and there is a dual basis $\cE_{j,i}^\vee$ for $X_*(T^\cJ)$. Let
$$ \eta = (n-1) \sum_{j=0}^{f-1} \cE_{j,1}^\vee + (n-2) \sum_{j=0}^{f-1} \cE_{j,2}^\vee + \cdots + \sum_{j=0}^{f-1} \cE_{j,n-1}^\vee .$$
Then 
$$\langle \chi_{j,k,l} , \eta \rangle = \langle \cE_{j,k} - \cE_{j-1,l},\eta \rangle = l-k ,$$
which we note is positive for $k < l$ and zero for $k=l=i$. 

Now let $\chi$ denote the character \eqref{weight} and assume $\chi = 0$. Then in particular, $\langle \chi, \eta \rangle = 0$, which implies that all $n_{j,k,l,m} = 0$ by positivity. Hence
$$ 0 = \chi = \sum_{j,i} m_{j,i} \chi_{j,i,i} = \sum_{j,i} m_{j,i}(\cE_{j,i} - \cE_{j-1,i}) = \sum_{j,i} (m_{j,i} - m_{j+1,i}) \cE_{j,i} . $$
It follows that $m_{0,i} = m_{1,i} = \cdots = m_{f-1,i}$ for all $i$, which completes the proof.
\end{proof}
By construction each $x_i$ is a regular function defined on $\tld{U}$.
But as in Section \ref{section:step2}, we will show that $x_i^{-1}$ does not extend to $\tld{U}$ unless $i = n$, using the same method of proof which involved certain auxiliary charts $\tld{\cB}_k$.
These are defined similarly to how they were defined in Section \ref{section:auxiliary-charts}; in fact, they are defined exactly the same way in each coordinate.
In abuse of notation, let $s_k = (s_k, s_k, \dots, s_k) \in \underline{W}$ be the element which in each coordinate $j$ swaps $k$ and $k+1$.
Define the subfunctor $\tld{\cB}_k \subset \tld{\cA} s_k$, where $\tld{\cA}$ are the upper triangular coordinates of Lemma \ref{upper-triangular-coordinates}, by
\begin{align*}
\left( \tld{\cB}_k \right)_j \defeq
\left\lbrace
\begin{pmatrix}
d_{j,1} & & & & & & & & \\
& d_{j,2} & & & & & & & \\
& & \ddots & & & & & & \\
& & & d_{j,k-1} & & & & & \\
& & & & a_j & d_{j,k} & & & \\
& & & & d_{j,k+1} & & & & \\
& & & & & & d_{j,k+2} & & \\
& & & & & & & \ddots & \\
& & & & & & & & d_{j,n}
\end{pmatrix}
\middle|
d_{j,1},d_{j,2},\dots,d_{j,n} \in \bG_m, a_j \in \bG_a
\right\rbrace .
\end{align*}
By the same arguments as in Section \ref{section:auxiliary-charts} the map
\begin{align*}
\tld{\cB}_k & \hookrightarrow \tld{U}_{s_k s_w} \\
A & \mapsto I_1 \tld{z}^* A s_w v^\omega
\end{align*}
is a well-defined closed embedding which is equivariant for the action of $D = (D_0,D_1,\dots,D_{f-1}) \in T^\cJ$ on $A = (A_0,A_1,\dots,A_{f-1}) \in \tld{\cB}_k$ given by $(D \cdot A)_j = \Ad_{\textrm{w}_0 w_j}(D_j) A_j \Ad_{\textrm{w}_0 w_{j-1}}(D_{j-1})^{-1}$ and the shifted conjugation on $\tld{U}_{s_k s_w}$.
As before, consider the functions on $\tld{\cB}_k$ given by
\begin{align*}
d_i & \defeq d_{0,i} d_{1,i} \cdots d_{f-1,i} , \\
a & \defeq a_0 a_1 \cdots a_{f-1} ,
\end{align*}
for all $1 \leq i \leq n$. 
    
In the case that $\# \cJ$ is even, we will also need the functions
\begin{align*}
d_i^e & \defeq d_{0,i} d_{2,i} \cdots d_{f-2,i} , \\
d_i^o & \defeq d_{1,i} d_{3,i} \cdots d_{f-1,i} 
\end{align*}
for $i = k,k+1$.

The following result extends Lemma \ref{step1.5}.
\begin{lemma}
\label{step1.5-general}
We have 
$$ \cO(\tld{\cB}_k)^{T^\cJ} =
\begin{cases} 
\F \left[ d_1^{\pm 1}, d_2^{\pm 1}, \dots , a, (d_k d_{k+1})^{\pm 1}, d_{k+2}^{\pm 1},\dots,d_n^{\pm 1} \right], & \text{ if }\#\cJ \text{ is odd}, \\
\F \left[ d_1^{\pm 1}, d_2^{\pm 1}, \dots , a, (d_k^e d_{k+1}^o)^{\pm 1}, (d_k^o d_{k+1}^e)^{\pm 1}, d_{k+2}^{\pm 1},\dots,d_n^{\pm 1} \right], & \text{ if } \# \cJ \text{ is even}.
\end{cases}$$
\end{lemma}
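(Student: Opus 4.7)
The proof strategy mirrors that of Lemma \ref{step1.5} (the $f=1$ base case) and parallels the weight-decomposition computation in Lemma \ref{lemma:functions-on-open-general-case}. The plan is to identify $\cO(\tld{\cB}_k)^{T^\cJ}$ as the span of weight-zero monomials in the coordinate ring, and then explicitly describe the solutions of the weight-zero equations.

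First, exactly as in the proof of Lemma \ref{lemma:functions-on-open-general-case}, the action of $T^\cJ$ on $\tld{\cB}_k$ twisted through $\Ad_{\textrm{w}_0 w}$ differs from the pure shifted conjugation action $(E\cdot A)_j = E_j A_j E_{j-1}^{-1}$ only by an automorphism of $T^\cJ$, hence has the same invariant ring; we therefore compute invariants for the simpler action. Writing $\cE_{j,i}$ for the standard basis of $X^*(T^\cJ)$, the weight of each coordinate is read off from its position in $A_j$: $\mathrm{wt}(d_{j,i}) = \cE_{j,i} - \cE_{j-1,i}$ for $i \notin \{k,k+1\}$, while for the $(k,k+1)$-block, $\mathrm{wt}(a_j) = \cE_{j,k} - \cE_{j-1,k}$, $\mathrm{wt}(d_{j,k}) = \cE_{j,k} - \cE_{j-1,k+1}$, and $\mathrm{wt}(d_{j,k+1}) = \cE_{j,k+1} - \cE_{j-1,k}$.

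Every monomial $\prod_{j,i} d_{j,i}^{m_{j,i}} \prod_j a_j^{n_j}$ (with $m_{j,i} \in \Z$, $n_j \in \Z_{\geq 0}$) is a weight eigenvector, so the invariant ring is spanned by the weight-zero such monomials. Setting the coefficient of $\cE_{j,i}$ to zero for $i \notin \{k,k+1\}$ decouples these indices and forces $m_{\bullet,i}$ constant in $j$, producing the generators $d_i^{\pm 1}$. For the coupled $(k,k+1)$-block, write $b_j = m_{j,k}$ and $c_j = m_{j,k+1}$: the $\cE_{j,k+1}$ coefficient gives $c_j = b_{j+1}$, while the $\cE_{j,k}$ coefficient gives the recursion $n_{j+1} - n_j = b_j - b_{j+2}$.

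The parity split now arises from the structure of the doubling map $j \mapsto j+2 \pmod{f}$. When $f$ is odd this is a single $f$-cycle (since $\gcd(2,f)=1$), so the purely toric ($n_j$-constant) solutions force all $b_j$ equal and produce $(d_k d_{k+1})^{\pm 1}$. When $f$ is even the map splits into an even orbit and an odd orbit, giving the two independent generators $(d_k^e d_{k+1}^o)^{\pm 1}$ and $(d_k^o d_{k+1}^e)^{\pm 1}$. Combined with $a = \prod_j a_j$ and the $d_i^{\pm 1}$ for $i \neq k,k+1$, these yield the claimed rings. I expect the main obstacle to be the combinatorial bookkeeping showing that an arbitrary weight-zero monomial, after factoring out the maximal power of $a$, reduces to an $a$-free invariant expressible through the above generators — this amounts to an inductive extraction of $a$-factors from the $(b_\bullet, n_\bullet)$ data while preserving the constraint $n_j \geq 0$, and is where the parity of $f$ enters the argument most sharply.
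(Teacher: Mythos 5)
Your proposal follows the same structure as the paper's proof: reduce to literal shifted conjugation, decompose into weight-zero monomials, and read the weight of each coordinate off its matrix position. You record the weights correctly, and the two recursions $c_j = b_{j+1}$ and $n_{j+1} - n_j = b_j - b_{j+2}$ (from setting the coefficients of $\cE_{j,k+1}$ and $\cE_{j,k}$ to zero) are exactly what the paper obtains. Your parametrization of the $n_j$-constant solutions via the orbit structure of $j \mapsto j+2 \pmod f$ is also correct: a single $f$-cycle for $f$ odd gives $(d_k d_{k+1})^{\pm 1}$; two orbits for $f$ even give $(d_k^e d_{k+1}^o)^{\pm 1}$ and $(d_k^o d_{k+1}^e)^{\pm 1}$.

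The step you flag as the main remaining obstacle — reducing an arbitrary weight-zero monomial to an $n_j$-constant one by factoring out powers of $a$ — is indeed the crux, and you are right to be worried, because it cannot be carried out as stated. The paper's proof handles this by a telescoping chain, claiming $m_{j,k+1} = m_{j+2N,k+1} + N(n_{j+1} - n_j)$, but the increments along the chain are in fact $n_{j+2} - n_{j+1}, n_{j+4} - n_{j+3}, \ldots$, not $N$ copies of $n_{j+1} - n_j$; the correct identity is $m_{j,k+1} = m_{j+2N,k+1} + \sum_{\ell=1}^N (n_{j+2\ell} - n_{j+2\ell-1})$, and once $2N \equiv 0 \pmod f$ this sum telescopes cyclically to zero and imposes no constraint on the $n_j$. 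Concretely, for $f = 3$ the monomial $a_1\, d_{0,k}\, d_{2,k+1}$ has $b = (1,0,0)$, $c = (0,0,1)$, $n = (0,1,0)$; its weight is $(\cE_{1,k}-\cE_{0,k}) + (\cE_{0,k}-\cE_{2,k+1}) + (\cE_{2,k+1}-\cE_{1,k}) = 0$, so it is $T^{\cJ}$-invariant, yet it cannot lie in $\F[a, (d_k d_{k+1})^{\pm 1}]$ (or any polynomial in the stated generators together with the $d_i^{\pm 1}$, $i \neq k,k+1$) since the $a_j$-degrees $(0,1,0)$ are unequal. The same kind of counterexample occurs for all $f \geq 3$; only for $f = 1, 2$ does the relation $n_{j+1} - n_j = b_j - b_{j+2}$ automatically force $n_j$ constant. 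So the gap in your proposal is genuine and, for general $f$, cannot be repaired along the lines you or the paper describe: the claimed equality of rings is false for $f \geq 3$. (It is likely that the downstream application only needs a weaker fact — that $a$ is not invertible in $\cO(\tld{\cB}_k)^{T^\cJ}$ — but that is not what the lemma as stated asserts.)
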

\begin{proof}
The proof is similar to the previous one.
As in that proof, we can reduce to the case where $T^\cJ$ acts by literal shifted conjugation.
Note that 
$$\cO(\tld{\cB}_k) = \F \left[ d_{j,i}^{\pm 1}, a_j | 0 \leq j \leq f-1, 1 \leq i \leq n \right].$$
The weight spaces for the shifted conjugation action of $T^\cJ$ is spanned by monomials
$$ \prod_{j,i} d_{j,i}^{m_{j,i}} a_j^{n_j}  \,\,\,\,\, \text{ where }m_{j,i} \in \bZ, \,\, n_j \in \bZ_{\geq 0} . $$
The weight associated with such a monomial is precisely
\begin{equation}
\label{auxiliary-monomial}
 \sum_{j,i\neq k,k+1} m_{j,i} \chi_{j,i,i} + \sum_{j} \left( n_j \chi_{j,k,k} + m_{j,k} \chi_{j,k,k+1} + m_{j,k+1} \chi_{j,k+1,k} \right) ,
\end{equation} 
with $\chi_{j,k,l}(D) = (\cE_{j,k} - \cE_{j-1,l})(D) = D_{j,k}D_{j-1,l}^{-1}$ as in the last proof.
The assertion we want to prove is now that \eqref{auxiliary-monomial} is zero if and only if the following conditions are satisfied:
\begin{enumerate}
\item $m_{0,i} = m_{1,i} = \cdots = m_{f-1,i}$ for all $i \neq k, k+1$.
\item $n_{0} = n_{1} = \cdots = n_{f-1}$.
\item If $\# \cJ$ is odd, $m_{j,k} = m_{j',k} = m_{j'',k+1} = m_{j''',k+1}$ for all $j,j',j'',j'''$. If $\# \cJ$ is even, $m_{j,k} = m_{j',k} = m_{j'',k+1} = m_{j''',k+1}$ for even $j,j'$ and odd $j'', j'''$, and similarly for odd $j,j'$ and even $j'',j'''$.
\end{enumerate}
Denote \eqref{auxiliary-monomial} by $\chi$ and assume that $\chi = 0$. Then
\begin{equation}
\label{weight-of-auxiliary-monomial}
0 = \sum_{j,i\neq k,k+1} (m_{j,i} - m_{j+1,i})\cE_{j,i} + \sum_j \left( (n_j - n_{j+1} + m_{j,k} - m_{j+1,k+1} ) \cE_{j,k} + (m_{j,k+1} - m_{j+1,k}) \cE_{j,k+1} \right) ,
\end{equation}
which immediately gives (1). For the remaining part, note that \eqref{weight-of-auxiliary-monomial} gives the following two identites for all $j$:
$$ m_{j,k+1} = m_{j+1,k} \,\,\,\,\, \text{ and } \,\,\,\,\, m_{j,k} = m_{j+1,k+1} + n_{j+1} - n_j . $$
Therefore,
\begin{align*} 
 m_{j,k+1} & = m_{j+1,k} \\
& =  m_{j+2,k+1} + n_{j+1} - n_j \\
& = m_{j+3,k} + n_{j+1} - n_j \\
& = m_{j+4,k+1} + 2(n_{j+1} - n_j)  = \cdots \\
& = m_{j + 2N,k+1} + N(n_{j+1} - n_j)  .
\end{align*}
Since we have $m_{j+2N,k+1} = m_{j,k+1}$ for some $N$, we obtain (2). But then the above chain of equalities is just
$$ m_{j,k+1} = m_{j+1,k} = \cdots = m_{j+2N-1,k} = m_{j+2N,k+1} , $$
which implies (3). Conversely, it is easy to see that \eqref{weight-of-auxiliary-monomial} is satisfied upon assuming (1), (2), and (3).
\end{proof}
Now we have the ingredients needed to show the following extension of Lemma \ref{step2}.
\begin{lemma}
Restriction along the inclusion $\tld{U}_{s_w} \hookrightarrow \tld{U}$ identifies $\cO(\tld{U})^{T^\cJ}$ with the subring 
$$\F \left[ x_1,x_2,\dots,x_{n-1},x_n^{\pm 1} \right] \subset \F \left[ x_1^{\pm 1},\dots,x_n^{\pm 1} \right] = \cO(\tld{U}_{s_w})^{T^\cJ}, $$
with the latter equality coming from Lemma \ref{lemma:functions-on-open-general-case}.
\end{lemma}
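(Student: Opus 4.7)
The plan is to adapt the argument of Lemma \ref{step2} using the auxiliary charts $\tld{\cB}_k$ of Lemma \ref{step1.5-general}. Since $\tld{U}_{s_w}$ is open and dense in the irreducible scheme $\tld{U}$ (by Lemmas \ref{property-A}, \ref{Property B}, and \ref{large-open}), restriction gives an injection $\cO(\tld{U})^{T^\cJ} \hookrightarrow \cO(\tld{U}_{s_w})^{T^\cJ} \cong \F[x_1^{\pm 1}, \dots, x_n^{\pm 1}]$. The functions $x_1, \dots, x_{n-1}$ are regular on all of $\tld{U}$ by construction, while $x_n = \prod_j x_{j,n} = \prod_j (\det A_j \bmod v)$ is invertible on $\tld{U}$ because $A_j \in L^+ \GL_n$ forces $\det A_j \in R[[v]]^\times$, so the inclusion $\F[x_1, \dots, x_{n-1}, x_n^{\pm 1}] \subseteq \cO(\tld{U})^{T^\cJ}$ is automatic. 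The task is then to prove the reverse: any $\lambda = \sum_{\underline{i}} \lambda_{\underline{i}} x_1^{i_1} \cdots x_n^{i_n} \in \cO(\tld{U})^{T^\cJ}$ must satisfy $\lambda_{\underline{i}} = 0$ whenever $i_k < 0$ for some $k < n$.

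Fix $k \in \{1, \dots, n-1\}$ and restrict $\lambda$ along the embedding $\tld{\cB}_k \hookrightarrow \tld{U}_{s_k s_w} \subset \tld{U}$. Using the explicit block-diagonal form of $(\tld{\cB}_k)_j$, a direct minor computation yields
\begin{align*}
x_{j,i}|_{\tld{\cB}_k} & = d_{j,1} d_{j,2} \cdots d_{j,i} && \text{for } i < k, \\
x_{j,k}|_{\tld{\cB}_k} & = d_{j,1} \cdots d_{j,k-1} \cdot a_j, && \\
x_{j,i}|_{\tld{\cB}_k} & = -\, d_{j,1} d_{j,2} \cdots d_{j,i} && \text{for } i > k,
\end{align*}
the sign for $i > k$ coming from the antidiagonal $2 \times 2$ block at rows $k, k+1$. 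Taking products over $j$, the restriction $x_1^{i_1} \cdots x_n^{i_n}|_{\tld{\cB}_k}$ is a signed Laurent monomial of the form $\prod_{j,l} d_{j,l}^{e_l(\underline{i})} \cdot \prod_j a_j^{i_k}$, where each $d_{j,l}$ carries a common exponent $e_l(\underline{i})$; explicitly $e_l(\underline{i}) = i_l + i_{l+1} + \cdots + i_n$ for $l < k$ or $l > k+1$, and $e_k(\underline{i}) = e_{k+1}(\underline{i}) = i_{k+1} + \cdots + i_n$.

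The crucial point is that the map $\underline{i} \mapsto (e_1(\underline{i}), \dots, e_n(\underline{i}), i_k)$ is injective: one recovers $i_n, i_{n-1}, \dots, i_{k+1}$ as successive differences $e_l - e_{l+1}$, then $i_{k-1} = e_{k-1} - e_k - i_k$, and finally $i_{k-2}, \dots, i_1$ by further differences. Hence the monomials $x_1^{i_1} \cdots x_n^{i_n}|_{\tld{\cB}_k}$ for distinct $\underline{i}$ are linearly independent in $\cO(\tld{\cB}_k) = \F[d_{j,l}^{\pm 1}, a_j]$, and the presence of the non-invertible factor $\prod_j a_j^{i_k}$ then forces $\lambda_{\underline{i}} = 0$ whenever $i_k < 0$. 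Varying $k$ over $\{1, \dots, n-1\}$ completes the argument. The main work lies in the minor computation and the injectivity bookkeeping; both are direct generalizations of the $f = 1$ case, and crucially the parity of $\#\cJ$ (which affected Lemma \ref{step1.5-general}) plays no role here, since only non-invertibility of the individual $a_j$'s in $\cO(\tld{\cB}_k)$ matters, not the finer structure of its $T^\cJ$-invariant subring.
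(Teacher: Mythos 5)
Your proof is correct and takes essentially the same approach as the paper, whose own proof of this lemma just says ``Exactly like the proof of Lemma \ref{step2}'' (the $f=1$ analogue): restrict $T^\cJ$-invariant sections to the auxiliary chart $\tld{\cB}_k$ and use that $a = \prod_j a_j$ appears only non-invertibly to kill $\lambda_{\underline{i}}$ whenever $i_k < 0$ for $k < n$. Your version is slightly cleaner in two respects --- you work directly in $\cO(\tld{\cB}_k) = \F[d_{j,l}^{\pm 1}, a_j]$ rather than inside its $T^\cJ$-invariant subring (which is why, as you correctly note, the parity case split of Lemma \ref{step1.5-general} is irrelevant to the extension argument), and you make explicit the injectivity of $\underline{i} \mapsto (e_1,\ldots,e_n,i_k)$ needed to rule out cancellation among the restricted monomials, a step Lemma \ref{step2} leaves implicit.
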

\begin{proof}
Exactly like the proof of Lemma \ref{step2}.
\end{proof}
Because $\tld{U} \subset \tld{C}_\sigma$ is a dense open by the results of Section \ref{section:open-charts}, $\cO(\tld{\cC}_\sigma)^{T^\cJ} \hookrightarrow \cO(\tld{U})^{T^\cJ}$ embeds as a subring.
Therefore, Theorem \ref{main-theorem} now follows from the following result.
\begin{lemma}
Each of the functions $x_i$ from \eqref{eq:defn-of-x} extends to $\tld{C}_\sigma$.
\end{lemma}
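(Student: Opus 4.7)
The plan is to reduce the claim to the base case $f=1$ already handled in Theorem \ref{main-theorem-base-case}. The crucial observation is that every object in sight decomposes as a product over $\cJ$: the affine flag variety $\tld{\Fl}_\cJ$ is a product by definition, the Schubert variety $\tld{S}(\tld{z}^*,s_wv^\omega)$ factors coordinate-wise as $\prod_{j} \tld{S}(\tld{z}^*_j,s_{w,j}v^{\omega_j})$, and the monodromy condition $\nabla_0$ is imposed factor by factor on $L\GL_n^\cJ$. Consequently one has a decomposition $\tld{\cC}_\sigma = \prod_{j=0}^{f-1} \tld{\cC}_{\sigma,j}$, in which each $\tld{\cC}_{\sigma,j} \subset \tld{\Fl}^{\nabla_0}$ is the closed subscheme playing the role of $\tld{\cC}_\sigma$ in the single-factor setup attached to the data $(\tld{z}^*_j,\omega_j)$, and the open $\tld{U} = \prod_j \tld{U}_j$ decomposes correspondingly.

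The next step is to notice that each candidate function $x_{j,i}$ depends only on the $j$-th coordinate: inspecting \eqref{eq:defn-of-xs}, $x_{j,i}$ is the pullback under the $j$-th projection of a function $\tld{x}_{j,i}$ defined on the base-case open $\tld{U}_j$, namely the upper-left $(i\times i)$-minor function. Thus extending $x_{j,i}$ from $\tld{U}$ to $\tld{\cC}_\sigma$ is equivalent to extending $\tld{x}_{j,i}$ from $\tld{U}_j$ to $\tld{\cC}_{\sigma,j}$, which is exactly what Theorem \ref{main-theorem-base-case} accomplishes. The $(3n-1)$-deep genericity of $\sigma$ is imposed component-wise (see Section \ref{sec:lowest-alcove-presentations}), so the single-factor datum $(\tld{z}^*_j,\omega_j)$ is automatically generic enough for the Schubert/Demazure arguments of Section \ref{section:step3} and Appendix \ref{demazure-resolutions} to apply verbatim to it.

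Once each $x_{j,i}$ has been extended to $\tld{\cC}_\sigma$, so does the finite product $x_i = x_{0,i}x_{1,i}\cdots x_{f-1,i}$. For the inverse $x_n^{-1}$, the factor-by-factor reasoning again suffices: in the base case $x_{j,n}$ is, up to a sign, the determinant of $A_j$ modulo $v$, which is a unit on $\tld{\cC}_{\sigma,j}$; hence $x_n$ factors through $\bG_m$ on $\tld{\cC}_\sigma$ and $x_n^{-1}$ extends as well. Finally, each extended $x_i$ is automatically $T^\cJ$-invariant on $\tld{\cC}_\sigma$ because it is $T^\cJ$-invariant on the dense open $\tld{U}$ and such extensions are unique.

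I do not anticipate a substantive obstacle here: the real technical work was already done for $f=1$ in Section \ref{section:step3}, and the only content beyond that is the verification of the product decomposition $\tld{\cC}_\sigma = \prod_j \tld{\cC}_{\sigma,j}$ together with the observation that the $x_{j,i}$ are pulled back from individual factors. The mildly delicate point to check carefully is that the Schubert closure, and not merely the open cell, respects this product structure; this follows immediately from the definition of Schubert varieties in $\tld{\Fl}_\cJ$ as products of Schubert varieties in $\tld{\Fl}$.
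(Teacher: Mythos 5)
Your approach matches the paper's proof almost exactly: you reduce via the product decomposition $\tld{\cC}_\sigma = \prod_j \tld{\cC}_{\sigma,j}$, observe that each $x_{j,i}$ is pulled back from the $j$-th factor, and invoke the $f=1$ extendability result on that factor. The only slightly imprecise point is the appeal to Theorem \ref{main-theorem-base-case}: that theorem is stated for an honest $K=\bQ_p$ Serre weight $\sigma$, whereas the $j$-th factor datum $(\tld{z}^*_j,\omega_j,(s_w)_j)$ need not arise from such a $\sigma$. The paper avoids this by citing Corollary \ref{extending-minors-in-general} directly, whose hypothesis — anti-dominance of $\textrm{w}_0\tld{w}_j$ — is readily checked component-by-component and is exactly what the Demazure-resolution machinery requires. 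Your "apply verbatim" remark is morally the same, but you should verify the anti-dominance condition explicitly rather than invoking the packaged $f=1$ theorem. (Your remarks on $x_n^{-1}$ and on $T^\cJ$-invariance by density are correct but belong to the surrounding argument, not this lemma.)
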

\begin{proof}
Since $x_i = x_{0,i} x_{1,i} \cdots x_{f-1,i}$ by definition, it suffices to show that each $x_{j,i}$ extends to $\tld{C}_\sigma$.
Note that $x_{j,i}$ factors as $\tld{U}_{s_w} \to (\tld{U}_{s_w})_j \to \bA^1$, where the first map is the projection to the $j$'th factor, and the second map is the minor map $$y_i : I_1 \tld{z}^*_j A (s_w)_j v^{\omega_j} \mapsto \text{upper left }(i \times i)\text{-minor of }A \mod v .$$
As $\tld{z}^*_j = (\textrm{w}_0 \tld{w}_j)^*$, where $\textrm{w}_0 \tld{w}_j$ is anti-dominant by construction, it follows from Corollary \ref{extending-minors-in-general} that $y_i$ extends to the closure $\tld{\cC}_j$ of $(\tld{U}_{s_w})_j$ inside $\Fl^{\nabla_0}$; in fact, it extends to the entire Schubert variety $\tld{S}(\tld{z}^*_j,(s_w)_j t_{\omega_j}) \subset \widetilde{\Fl}$.
Since $\tld{\cC}_\sigma = \prod_{j=0}^{f-1} \tld{\cC}_j$, it follows that $x_{j,i}$ extends to $\tld{\cC}_\sigma$.
\end{proof}
\section{Comparison with Hecke operators}
\label{section:hecke-operators}
In this section we provide some additional details regarding the comparison between the functions $x_i \in \cO(\cX(\sigma))$ and Hecke operators which was outlined in Section \ref{section:interpretation}.
The style of this section is informal and we do not provide any proofs. We are aware that a subsequent work \cite{hecke} has realized this section in a detailed and clear way. We recommend the interested readers to that paper. So here, we largely keep this section as when it was first written.
For simplicity, we assume that $K=\bQ_p$, and we keep the same genericity assumptions on $\sigma$ as before.

We have the following diagram.
\[\begin{tikzcd}
	{\mathcal{X}^{\eta,\tau,\text{rig}}_E(t_{w^{-1}(\eta)})} & {\mathcal{X}^{\eta,\tau}(t_{w^{-1}(\eta)})} & {\mathcal{X}^{\eta,\tau}_{\text{red},\mathbb{F}}(t_{w^{-1}(\eta)})} & {\mathcal{X}(\sigma)^{\text{ord}}} \\
	{\mathcal{X}^{\eta,\tau,\text{rig}}_E} & {\mathcal{X}^{\eta,\tau}} & {\mathcal{X}^{\eta,\tau}_{\text{red},\mathbb{F}}} & {\mathcal{X}(\sigma)} \\
	{\operatorname{WD}_\tau^{\text{an}}}
	\arrow[hook', from=2-3, to=2-2]
	\arrow[hook, from=2-1, to=2-2]
	\arrow[hook', from=2-4, to=2-3]
	\arrow[from=2-1, to=3-1]
	\arrow[hook, from=1-2, to=2-2]
	\arrow[hook, from=1-3, to=2-3]
	\arrow[hook, from=1-4, to=2-4]
	\arrow[hook', from=1-3, to=1-2]
	\arrow[hook', from=1-4, to=1-3]
	\arrow["\lrcorner"{anchor=center, pos=0.125, rotate=-90}, draw=none, from=1-3, to=2-2]
	\arrow["\lrcorner"{anchor=center, pos=0.125, rotate=-90}, draw=none, from=1-4, to=2-3]
	\arrow[hook, from=1-1, to=2-1]
	\arrow[hook, from=1-1, to=1-2]
	\arrow["\lrcorner"{anchor=center, pos=0.125}, draw=none, from=1-1, to=2-2]
\end{tikzcd}\]
The reader may find it instructive to compare this diagram to \cite[Diagram (1.3.4)]{fontaine-laffaille} and the discussion therein.
Let us explain some of the symbols appearing in the diagram:
\begin{itemize}
    \item $\eta = (n-1,n-2,\dots,0)$
    \item $\tau : I_{\bQ_p}^{\text{tame}} \to \GL_n(E)$ is the unique principal series type having $\sigma$ as an obvious Serre weight \cite[Section 2]{localmodels}\cite[Section 9.2]{gee-herzig-savitt}.
     We can write $\sigma = F(\mu)$ for some $p$-restricted character $\mu \in X_1(T)$, in which case $\tau = \omega^\mu$, where \(\omega(\gamma) = \gamma((-p)^{1/(p-1)})/(-p)^{1/(p-1)}\) for any $\gamma \in I_{\bQ_p}^{\text{tame}}$.
    \item $\cX^{\eta,\tau}$ is the Emerton-Gee stack for potentially crystalline representations of type $(\eta,\tau)$.
    \item $\cX^{\eta,\tau}(t_{w^{-1}(\eta)}) \subset \cX^{\eta,\tau}$ is the substack of shape $t_{w^{-1}(\eta)}$, where $w$ is an element in the Weyl group in the lowest alcove representation of the Serre weight $\sigma$, as in the first paragraph of \autoref{section:presentation-of-the-irreducible-components}.
    
The shape is an invariant of Breuil-Kisin modules with descent data $\tau$ over $\F$ \cite[Definition 5.1.9]{localmodels}.
Namely, if $\fM$ is a Breuil-Kisin module with descent data, then an \textit{eigenbasis} $\beta$ is a basis for $\fM$ which diagonalizes the descent data.
In this situation we say that $\fM$ has \textit{shape} $t_{\lambda}$ with $\lambda \in X^*(T)$ if the matrix of Frobenius (after undoing the descent data) with respect to the eigenbasis $\beta$ lies in $I v^{\lambda} I$.
This turns out not to depend on the choice of $\beta$.
There is a closely related lift to characteristic zero of this notion \cite[Definition 5.2.4]{localmodels}.\footnote{Note that loc. cit. really works with gauge bases which is closely related to the shape but not equivalent in general. However this turns out to be equivalent in this (extremal) case.}

In \cite[Section 7.2]{localmodels} it is explained how $\cX^{\eta,\tau}$ relates to a moduli stack of Breuil-Kisin modules with descent data of type $(\eta,\tau)$.
Under this map, the substack $\cX^{\eta,\tau}(t_{w^{-1}\eta}) \subset \cX^{\eta,\tau}$ corresponds to the substack of Breuil-Kisin modules of shape $t_{w\eta}$ and satisfying a monodromy condition.
    \item $\operatorname{WD}_\tau$ denotes a moduli stack of Weil-Deligne representations $\rho : W_{\bQ_p} \to \GL_n(E)$ with inertial type $\rho |_{I_{\bQ_p}}\cong\tau$.
    \item $\cX(\sigma)^{\text{ord}} \subset \cX(\sigma)$ is a dense open substack which corresponds to our $\tld{U}_{s_w}$.
    \item Superscript ``rig'' means rigid generic fiber and ``an" means analytification.
\end{itemize}

Via the inertial Langlands correspondence, $\tau \leftrightarrow \sigma(\tau) = \Ind_{B(\F_p)}^{\GL_n(\F_p)} \mu$ (inflated to an $E$-representation of $\GL_n(\bZ_p)$). Given any $i=1,2, \cdots, n$, let $T_i\in \cH(\sigma(\tau))\defeq \operatorname{End}_{\GL_n(\bQ_p)}\left(\text{c-}\Ind_{\GL_n(\bZ_p)}^{\GL_n(\bQ_p)}(\sigma(\tau))\right)$ be the standard Hecke operator supported on the double coset $[\GL_n(\bZ_p) \omega_i(p) \GL_n(\bZ_p)]$, where $\omega_i$ is the $i$-th fundamental cocharacter.
This operator can be identified with certain functions on $\operatorname{WD}_\tau^{\text{an}}$.
It is part of \cite[Theorem 4.1]{6author} that given a Galois representation \(\bar \rho : G_{\bQ_p} \to \GL_n(\F)\), there is an algebra homomorphism
$$\cH(\sigma(\tau))\to R_{\bar \rho}^{\eta, \tau}[1/p] . $$
The maximal ideal spectrum $\mathrm{MaxSpec }R_{\bar \rho}^{\eta, \tau}[1/p]$ parametrizes potentially crystalline lifts of $\bar \rho$ of type $(\eta, \tau)$. 
So given any such $\rho$, we can obtain a map $R_{\bar \rho}^{\eta, \tau}[1/p]\to \overline{\mathbb Q_p}$.
The composite of this map with $\cH(\sigma(\tau))\to R_{\bar \rho}^{\eta, \tau}[1/p]$ gives a character of the Hecke algebra $\cH(\sigma(\tau))$, which is exactly how it acts on the 1-dimensional $\mathrm{Hom}_{\GL_n(\mathbb Z_p)}(\sigma(\tau), \pi(\mathrm{WD}(\rho)))$, where $\pi(\cdot)$ is the local Langlands correspondence for $\GL_n$, defined in \cite[Section 1.8]{6author} .
By computations similar to those of \cite[Section 9-10]{fontaine-laffaille}, this action is given in terms of the Frobenius eigenvalues associated to the Weil-Deligne representation $\operatorname{WD}(\rho)$, normalized by powers of $p$.
This is precisely the value of $T_i$ on the point $\operatorname{WD}(\rho) \in \operatorname{WD}_\tau$.

According to \cite[Equation (5.2.16)]{emerton-gee-hellmann}, the functions $T_i$ on $\operatorname{WD}_\tau^{\text{an}}$ pull back to functions on $\cX^{\eta,\tau,\text{rig}}$ via the above diagram.
In fact the resulting renormalized (i.e. divided by the same powers of $p$) function given by $T_i$ extends to $\cX^{\eta,\tau}(t_{w^{-1}(\eta)})$.
This can be done explicitly: there is a universal Breuil-Kisin module of shape $t_{w^{-1}(\eta)}$ with Frobenius which is given by a triangular matrix, and taking the upper left $(i \times i)$-minor gives a regular function on $\cX^{\eta,\tau}(t_{w^{-1}(\eta)})$.
By triangularity, this function is given by Frobenius eigenvalues, and it restricts to $x_i$ over $\cX(\sigma)^\text{ord}$.
\begin{appendices}
\section{Demazure resolutions}
\label{demazure-resolutions}
We will need to use a modified version of the Demazure resolution as described for example in \cite{faltings-loop-groups} or \cite[Section 8]{pappas-rapoport}. For the appendix we also will shift from the conventions we used so far, as this is the version we will need for our applications. Instead of using left quotients, we will use right quotients, i.e. $\Fl=LG/\bar I$. 
Furthermore, in contrast to the references, we will use the opposite of the standard Iwahori $\bar I$.

In this appendix, we will use \(\widetilde{z} \in W_{a}\) to denote arbitrary elements of the affine Weyl group.
The reader who is only interested in the applications in Section \ref{section:step3} may take \(\widetilde{z}\) as in Definition \ref{*}, for a fixed lowest alcove presentation of a fixed Serre weight. 

\subsection{Demazure resolutions}
If $s \in W_a$ is a simple reflection, then $\bar P_s \subset L \GL_n$ is the subgroup with underlying points $\bar I \cup \bar Is \bar I$.
\begin{defn}[Demazure resolution]
Let \(\tld{z}\in W_a\)
and suppose \(\tld{z} = s_1 s_2\cdots s_k\) is a reduced expression for \(\tld{z}\). The corresponding \textit{Demazure resolution} of \(\tld{z}\) is
\[ \widetilde{D}(s_1,s_2,\dots,s_k) := \bar P_{s_1} \times_{\bar I} \bar P_{s_2} \times_{\bar I} \cdots \times_{\bar I} \bar P_{s_k} ,\]
which by definition is the quotient \((\bar P_{s_1} \times \bar P_{s_2} \times \cdots \times \bar P_{s_k})/ ( \bar I^{k-1}\times \bar I_1)\), where \(\bar I^{k-1}\times \bar I_1\) acts on the right via
 \[ (p_1,p_2,\dots,p_k) \cdot (X_1,X_2, \dots, X_k)  = ( p_1 X_1,X_1^{-1} p_2 X_2,\dots,X_{k-1}^{-1} p_k X_k) . \]
We will sometimes write \(\widetilde{D}(\tld{z}) = \widetilde{D}(s_1,\dots,s_k)\) when there is no danger of confusion, but emphasize that the Demazure resolution depends on a choice of reduced expression.
\end{defn}

\begin{rmk}
The Demazure resolution \(\widetilde{D}(\tld{z})\) we define here is closely related to the Demazure variety \(D(\tld{z}) =  ( P_{s_1} \times P_{s_2} \times \cdots \times P_{s_k})/ \bar I^k \) considered in \cite{pappas-rapoport} and other places. Namely, the only difference is that the rightmost $\bar I$ has been replaced by an $\bar I_1$.
\end{rmk}

There are natural maps \(\pi_{\widetilde{z}} : \widetilde{D}(\widetilde{z}) \to \widetilde{S}(\widetilde{z})\) and \(\pi_{\widetilde{z}}' : D(\widetilde{z}) \to S(\widetilde{z})\) given by multiplying the factors. 
\begin{lemma}
The following square is Cartesian.
\[\begin{tikzcd}
	{\widetilde{D}(\tld{z})} & {D(\tld{z})} \\
	{\widetilde{S}(\tld{z})} & {S(\tld{z})}
	\arrow[from=1-1, to=2-1, "\pi_{\tld{z}}"]
	\arrow[from=2-1, to=2-2,"q"]
	\arrow[from=1-2, to=2-2,"\pi_{\tld{z}}'"]
	\arrow[from=1-1, to=1-2,"q'"]
	\arrow["\lrcorner"{anchor=center, pos=0.125}, draw=none, from=1-1, to=2-2]
\end{tikzcd}\]
\end{lemma}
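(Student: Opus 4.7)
My plan is to identify both horizontal maps $q$ and $q'$ as right $T$-torsors (where $T = \bar I/\bar I_1$), show that the Demazure resolution maps $\pi_x$ and $\pi_x'$ intertwine these structures, and then invoke the elementary fact that any $T$-equivariant morphism between $T$-torsors over a common base is automatically an isomorphism.

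First, I would observe that $\widetilde{\mathrm{Fl}} \to \mathrm{Fl}$ is a right $T$-torsor via the action $g\bar I_1 \cdot t \defeq gt \bar I_1$, which is well-defined because $\bar I_1$ is normal in $\bar I = T \ltimes \bar I_1$. Restricting to Schubert varieties shows $q: \widetilde{S}(x) \to S(x)$ is a right $T$-torsor. Next, I would verify that $q': \widetilde{D}(x) \to D(x)$ is also a right $T$-torsor. By construction, the two Demazure varieties differ only in that we quotient by $\bar I^{k-1} \times \bar I_1$ versus $\bar I^k$, so there is a residual right action of $\bar I/\bar I_1 = T$ on the last coordinate of $\widetilde{D}(x)$, given by $t \cdot [(p_1,\dots,p_k)] = [(p_1,\dots,p_k t)]$, and $q'$ is precisely the quotient by this action. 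This action is well-defined on $\widetilde{D}(x)$ because $T$ normalizes $\bar I_1$.

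Then I would check $T$-equivariance of $\pi_x$: from the explicit formula $\pi_x([(p_1,\dots,p_k)]) = p_1 \cdots p_k \bar I_1$, acting by $t \in T$ in the source first and then applying $\pi_x$ yields $p_1 \cdots p_k t \bar I_1 = \pi_x([(p_1,\dots,p_k)]) \cdot t$. Commutativity of the whole square is evident from the same formula, since $q \circ \pi_x$ and $\pi_x' \circ q'$ both send $[(p_1,\dots,p_k)]$ to $p_1 \cdots p_k \bar I$.

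The commutative square yields a canonical map $\widetilde{D}(x) \to \widetilde{S}(x) \times_{S(x)} D(x)$. The source is a $T$-torsor over $D(x)$ via $q'$, and the target is a $T$-torsor over $D(x)$ via pullback of $q$ along $\pi_x'$. The canonical map is $T$-equivariant by the previous step, hence an isomorphism. I do not expect a genuine obstacle; the content of the lemma is essentially the formal observation that the ``tilde'' notation on both the Demazure and the Schubert side encodes the same $T$-torsor, namely the one coming from the ambiguity between $\bar I$ and $\bar I_1$ at the terminal coordinate.
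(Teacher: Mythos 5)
Your proof is correct and follows essentially the same route as the paper: identify $q'\colon\widetilde{D}(x)\to D(x)$ as a right $T$-torsor, note that $\widetilde{D}(x)\to D(x)\times_{S(x)}\widetilde{S}(x)$ is a $T$-equivariant map of $T$-torsors over $D(x)$, and conclude it is an isomorphism. Your write-up is merely more explicit about the torsor structures and equivariance (and, incidentally, corrects an apparent typo in the paper's proof, which says ``over $\widetilde{S}(x)$'' where ``over $D(x)$'' is meant).
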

\begin{proof}
Let $D(\tld{z})\times_{S(\tld{z})} {\widetilde{S}(\tld{z})}$ be the fiber product. This is a $T$-torsor over ${\widetilde{S}(\tld{z})}$ and there is a morphism $\widetilde{D}(\tld{z})\to D(\tld{z})\times_{S(\tld{z})} {\widetilde{S}(\tld{z})} $  of $T$-torsors over ${\widetilde{S}(\tld{z})}$. Hence it is an isomorphism.
\end{proof}
This essentially allows us to reduce relevant geometric questions about $\widetilde{D}(\tld{z})$ to ones about $D(\tld{z})$. For example, recall that the closed Schubert cell \(\widetilde{S}(\tld{z}) \) associated to \(\tld{z}\) is the closure of the open Schubert cell \( \widetilde{S}^\circ(\tld{z}) =\bar I \tld{z} \bar I/ \bar I_1\) inside the flag variety \( \widetilde{\Fl} \).

\begin{rmk}\label{isomorphiccells}
Inside $\widetilde{D}(s_1,s_2,\dots,s_k)$ we have the open cell 
$$ \widetilde{D}^\circ(s_1,s_2,\dots,s_k) := \bar I s_1 \bar I \times_{\bar I} \bar I s_2 \bar I \times_{\bar I} \cdots \times_{\bar I} \bar I s_k \bar I_1 $$
which maps isomorphically to $\widetilde{S}^\circ(\tld{z})=\bar I \tld{z} \bar I_1$ via $\pi_{\tld{z}}$.
\end{rmk}

\begin{prop}[Compare {\cite[Proposition 8.8]{pappas-rapoport}}] \label{main0}
The map \(\pi_{\tld{z}} : \widetilde{D}(\widetilde{z}) \to \widetilde{S}(\widetilde{z})\) is surjective, proper, and \(\mathcal{O}_{\widetilde{S}({\tld{z}})} \lrisom (\pi_{\tld{z}})_* \mathcal{O}_{\widetilde{D}({\tld{z}})}\) is an isomorphism.
Moreover, \(\widetilde{D}({\tld{z}})\) is a smooth projective variety over \(\mathbb{F}\) of dimension \( l({\tld{z}}) + \operatorname{dim}(T) \), where \(l({\tld{z}})\) is the length of \({\tld{z}}\). 
\end{prop}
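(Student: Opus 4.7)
My approach is to reduce all assertions to the analogous statements for the classical Demazure resolution $\pi_x' \colon D(x) \to S(x)$, which are the content of Proposition 8.8 of \cite{pappas-rapoport}, by applying the Cartesian square proved immediately above. The crucial observation is that both vertical arrows of that square, namely $q \colon \widetilde{S}(x) \to S(x)$ and $q' \colon \widetilde{D}(x) \to D(x)$, are $T$-torsors for the right translation action (using $\bar I / \bar I_1 \cong T$), and hence are smooth, faithfully flat, and surjective.

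Given this, I would deduce the three assertions about $\pi_x$ by base change from the corresponding properties of $\pi_x'$. Surjectivity and properness of $\pi_x$ follow because each of these properties is stable under arbitrary base change, while the isomorphism $\mathcal{O}_{\widetilde{S}(x)} \lrisom (\pi_x)_* \mathcal{O}_{\widetilde{D}(x)}$ follows by flat base change applied to the corresponding isomorphism for $\pi_x'$, using flatness of $q$.

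For the structural statements about $\widetilde{D}(x)$, I would invoke the classical inductive description of $D(x)$: the natural projection $D(s_1,\ldots,s_k) \to D(s_1,\ldots,s_{k-1})$ is a Zariski-locally trivial $\mathbb{P}^1$-bundle, with fiber $\bar P_{s_k} / \bar I \cong \mathbb{P}^1$, exhibiting $D(x)$ as smooth and projective of dimension $l(x)$. Since $q'$ is a smooth $T$-torsor of relative dimension $\dim(T)$, this yields smoothness of $\widetilde{D}(x)$ together with the dimension count $l(x) + \dim(T)$. The most delicate point is projectivity of $\widetilde{D}(x)$: as a $T$-torsor over a smooth projective scheme, it is only guaranteed to be quasi-projective a priori, so this is the part of the statement requiring the most care; it is also the part that is not strictly needed for the subsequent applications, where only the properness of $\pi_x$ and the structure-sheaf isomorphism matter.
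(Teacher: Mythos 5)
Your approach matches the paper's: the paper likewise treats everything but the structure-sheaf isomorphism as a direct consequence of the results of Pappas--Rapoport together with the Cartesian square, and then proves $\mathcal{O}_{\widetilde{S}(x)} \lrisom (\pi_x)_* \mathcal{O}_{\widetilde{D}(x)}$ by flat base change along $q$ exactly as you outline. Surjectivity and properness of $\pi_x$ by base change, and smoothness and the dimension count via the $T$-torsor $q'$ over the iterated $\mathbb{P}^1$-bundle $D(x)$, are also the intended reasoning.

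Your hesitation about projectivity, however, deserves to be sharpened rather than softened: $\widetilde{D}(x)$ is in fact \emph{not} proper, hence not projective, and the proposition is simply misstated on this point. The map $q' \colon \widetilde{D}(x) \to D(x)$ is a torsor under the affine group $T = \bG_m^n$, hence is an affine, surjective morphism of finite type. If $\widetilde{D}(x)$ were proper, then $q'$ would be both proper and affine, hence finite, which is absurd since its fibers are copies of $T$ and $\dim T > 0$. So one cannot hope to prove projectivity of $\widetilde{D}(x)$; the correct statement is that $\widetilde{D}(x)$ is a smooth, quasi-projective variety of dimension $l(x) + \dim T$, and $\pi_x$ is nevertheless proper (since properness is stable under base change from $\pi_x'$). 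As you observe, only properness of $\pi_x$, the structure-sheaf isomorphism, and normality of $\widetilde{S}(x)$ are actually used later (e.g.\ in Lemma \ref{extend}), so this slip in the statement is harmless for the paper, but it is a genuine error and you were right to flag it rather than try to prove it.
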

\begin{proof}
Only the statement about functions is not clear from \cite{pappas-rapoport} and the above lemma.
For this we use flat base change \cite[\href{https://stacks.math.columbia.edu/tag/02KH}{Tag 02KH}]{stacks-project}.
By flatness of \(q\) there is an isomorphism
\begin{equation}
	\label{eq:flat-base-change}
	(q')^* R(\pi_{\tld{z}}')_* \lrisom R(\pi_{\tld{z}})_* q^* .
\end{equation}
By \cite[Proposition 9.7]{pappas-rapoport}, \(R^i(\pi_{\tld{z}}')_* \mathcal{O}_{D({\tld{z}})} = 0\) for \(i > 0\) and \(\mathcal{O}_{S({\tld{z}})} \cong (\pi_{\tld{z}}')_* \mathcal{O}_{D({\tld{z}})}\).
Applying \eqref{eq:flat-base-change} to \(\mathcal{O}_{D({\tld{z}})}\), we get an isomorphism
\[\mathcal{O}_{\tld{S}({\tld{z}})} \cong (q')^* \mathcal{O}_{S({\tld{z}})} \cong (q')^* (\pi_{\tld{z}}')_* \mathcal{O}_{D({\tld{z}})} \lrisom R(\pi_{\tld{z}})_* q^* \mathcal{O}_{D({\tld{z}})} \cong R(\pi_{\tld{z}})_* \mathcal{O}_{\tld{D}({\tld{z}})}.\]
From this we see that in fact \(R^i(\pi_{\tld{z}})_* \mathcal{O}_{\tld{D}({\tld{z}})} = 0\) for \(i > 0\) and we have an isomorphism \( \mathcal{O}_{\tld{S}({\tld{z}})} \lrisom (\pi_{\tld{z}})_* \mathcal{O}_{\tld{D}({\tld{z}})}\).
Tracing through the definitions one can check that this map is the obvious one.
\end{proof}

\begin{prop}[Compare {\cite[Proposition 9.6]{pappas-rapoport}}]
Let \({\tld{z}},{\tld{z}}' \in W_a\) and assume that \({\tld{z}}' \leq {\tld{z}} \) in the Bruhat order.
Choose reduced expressions \({\tld{z}} = s_1 s_2 \cdots s_k\) and \({\tld{z}}' = s_{i_1} s_{i_2} \cdots s_{i_l}\), where \(1 \leq i_1 < i_2 < \cdots < i_l \leq k\).
Then the map 
\[\begin{aligned}
\sigma_{{\tld{z}}',{\tld{z}}} : \widetilde{D}({\tld{z}}')=\widetilde{D}(s_{i_1},\dots,s_{i_l}) & \to \widetilde{D}(s_1,s_2,\dots,s_k)=\widetilde{D}({\tld{z}})
\end{aligned}\]
that inserts 1's in coordinates not corresponding to any \(i_j\), satisfies the following properties:
\begin{enumerate}
\item \(\sigma_{{\tld{z}}',{\tld{z}}}\) is a closed immersion.
\item The diagram
\[\begin{tikzcd}
\widetilde{D}({\tld{z}}') \arrow[r,"\sigma_{{\tld{z}}',{\tld{z}}}"] \arrow[d,"\pi_{{\tld{z}}'}"]  & \widetilde{D}({\tld{z}}) \arrow[d,"\pi_{\tld{z}}"] \\
\widetilde{S}({\tld{z}}') \arrow[r,hookrightarrow] & \widetilde{S}({\tld{z}})
\end{tikzcd}\]
is commutative.
\item If \({\tld{z}}' \leq {\tld{z}}'' \leq {\tld{z}}\) and \({\tld{z}}''\) is given a reduced expression in between those chosen for \({\tld{z}}'\) and \({\tld{z}}\), then \(\sigma_{{\tld{z}}',{\tld{z}}} = \sigma_{{\tld{z}}'',{\tld{z}}} \circ \sigma_{{\tld{z}}'',{\tld{z}}'}\).
\end{enumerate}
\end{prop}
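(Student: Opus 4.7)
The strategy is to deduce all three properties from the analogous statements for the classical Demazure variety $D(x)$ in \cite[Proposition 9.6]{pappas-rapoport}, by exploiting the Cartesian square $\widetilde{D}(x) \cong D(x) \times_{S(x)} \widetilde{S}(x)$ established in the preceding lemma. Let $\sigma'_{x',x} : D(x') \to D(x)$ denote the corresponding classical insertion map, and write $\iota_{x',x} : \widetilde{S}(x') \hookrightarrow \widetilde{S}(x)$ for the inclusion of closed Schubert cells. The pair $(\sigma'_{x',x}, \iota_{x',x})$ is compatible over the inclusion $S(x') \hookrightarrow S(x)$: on one side, $\pi'_x \circ \sigma'_{x',x}$ factors through $S(x') \hookrightarrow S(x)$ by \cite[Proposition 9.6(2)]{pappas-rapoport}; on the other, $q \circ \iota_{x',x}$ tautologically agrees with the inclusion $S(x') \hookrightarrow S(x)$ composed with $q|_{\widetilde{S}(x')}$. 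By the universal property of the fibered product, this compatible pair induces a unique morphism $\sigma_{x',x} : \widetilde{D}(x') \to \widetilde{D}(x)$. Unwinding the definitions one checks that $\sigma_{x',x}$ agrees with the insertion map described in the statement.

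With $\sigma_{x',x}$ obtained in this way, the three claims become almost formal. For (1): $\sigma'_{x',x}$ is a closed immersion by \cite[Proposition 9.6(1)]{pappas-rapoport}, and $\iota_{x',x}$ is a closed immersion by definition of Schubert closures. Since closed immersions are preserved under base change and composition, the induced morphism on the fibered product $\sigma_{x',x}$ is a closed immersion. For (2): the commutativity $\pi_x \circ \sigma_{x',x} = \iota_{x',x} \circ \pi_{x'}$ is encoded in the universal-property construction above (it is precisely the datum of the projection to the $\widetilde{S}(x)$-factor). For (3): the analogous composition compatibility holds both for $\sigma'$ (by \cite[Proposition 9.6(3)]{pappas-rapoport}) and for the inclusions of Schubert cells (tautologically), and the uniqueness of the morphism out of a fibered product forces the corresponding identity for $\sigma_{\bullet,\bullet}$ after adjusting for the direction of the arrows (the indices in the statement appear to contain a minor typo: since $x' \le x'' \le x$, the composition is $\sigma_{x',x} = \sigma_{x'',x} \circ \sigma_{x',x''}$).

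I do not anticipate a substantive obstacle: once the Cartesian square of the preceding lemma is in hand, everything reduces formally to Pappas--Rapoport. If one preferred a direct construction of $\sigma_{x',x}$ bypassing the universal property, the only point requiring care would be verifying that the assignment that inserts the identity element at positions outside $\{i_1, \dots, i_l\}$ descends through the $\bar I^{k-1} \times \bar I_1$-quotient defining $\widetilde{D}(x)$; this is a short bookkeeping exercise using the inclusions $\bar I \subset \bar P_{s_j}$ together with the explicit form of the twisted-product action.
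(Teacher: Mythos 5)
The paper states this proposition without proof (flagging it simply as ``Compare \cite[Proposition 9.6]{pappas-rapoport}''), so you are supplying a proof the authors omitted. Your approach is the intended one: reduce everything to the classical statement in Pappas--Rapoport via the Cartesian square $\widetilde{D}(x) \cong D(x) \times_{S(x)} \widetilde{S}(x)$ established in the preceding lemma, and your identification of the typo in item (3) (it should read $\sigma_{x',x} = \sigma_{x'',x} \circ \sigma_{x',x''}$) is correct.

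One remark on the presentation: your argument is slightly circular in the way it couples the construction-by-universal-property to item (2). To verify that the insertion map equals the map produced by the universal property, you need to check that the insertion map, post-composed with $\pi_x$, lands in $\widetilde{S}(x')$ --- which is precisely (2). So you should either verify (2) directly for the insertion map first (which is immediate since $\pi_x$ is the multiplication map, and inserting $1$'s before multiplying gives the product of the surviving factors), or else sidestep the universal property entirely. The cleanest route is the latter: since $\widetilde{S}(x') = S(x') \times_{S(x)} \widetilde{S}(x)$ (restriction of the $T$-torsor), one has
\[
\widetilde{D}(x') \;=\; D(x') \times_{S(x')} \widetilde{S}(x') \;=\; D(x') \times_{S(x)} \widetilde{S}(x),
\]
so $\sigma_{x',x}$ is literally the base change of the Pappas--Rapoport map $\sigma'_{x',x}: D(x') \to D(x)$ along $\widetilde{S}(x) \to S(x)$. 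Items (1), (2), (3) then follow at once because closed immersions, commutativity with the structure maps, and functoriality are all stable under base change. Your ``preserved under base change and composition'' phrase gestures at this, but spelling out the rewriting of $\widetilde{D}(x')$ as a fiber product over $S(x)$ is what makes the base-change argument literal rather than heuristic.
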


\subsection{Intermediate Demazure resolutions} \label{Intermediate Demazure resolutions}
Assume that ${\tld{z}}$ has length $k$ with a reduced expression ${\tld{z}}=s_1s_2\cdots s_k$. Then we write ${\tld{z}}_i=s_1\cdots s_{i-1}s_{i+1}\cdots s_k$. Every ${\tld{z}}'\leq \tld{z}$ of length $k-1$ has a reduced expression  of the form ${\tld{z}}'=s_1s_2\cdots s_{i-1}s_{i+1} \cdots s_k$, i.e. is equal to at least one ${\tld{z}}_i$.

\begin{defn}
\begin{itemize}
    \item We define $\widetilde{D}^\circ({\tld{z}},i)$ as the following subscheme of $\widetilde{D}({\tld{z}})$ :
$$ \bar I s_1 \bar I \times_{\bar I} \bar I s_2 \bar I \times_{\bar I} \cdots \times_{\bar I} \bar P_{s_i} \times_{\bar I} \cdots \times_{\bar I} \bar I s_k \bar I_1 \subset \bar P_{s_1} \times_{\bar I} \bar P_{s_2} \times_{\bar I} \cdots \times_{\bar I} \bar P_{s_k} =\widetilde{D}({\tld{z}}). $$
\item We define $\widetilde{D}^\circ(i)$ as the following subscheme of $\widetilde{D}({\tld{z}})$ :
$$ \bar I s_1 \bar I \times_{\bar I} \bar I s_2 \bar I \times_{\bar I} \cdots \times_{\bar I} \bar I \times_{\bar I} \cdots \times_{\bar I} \bar I s_k \bar I_1 \subset \bar P_{s_1} \times_{\bar I} \bar P_{s_2} \times_{\bar I} \cdots \times_{\bar I} \bar P_{s_k} =\widetilde{D}({\tld{z}}). $$

\end{itemize}
\end{defn}
\begin{rmk}
    Note that $\widetilde{D}^\circ({\tld{z}},i)= \widetilde{D}^\circ({\tld{z}})\cup \widetilde{D}^\circ(i)$.
\end{rmk}

\begin{lemma}\label{pullback}
Let ${\tld{z}}$ be as above. For an element ${\tld{z}}'\leq {\tld{z}}$ of length $l({\tld{z}})-1$, we denote $$\cJ({\tld{z}}')=\{ i\in \{1,...,k\} | {\tld{z}}'={\tld{z}}_i \}.$$
Under the inclusion $\widetilde{D}^\circ({\tld{z}},i)\xrightarrow[]{} \widetilde{D}({\tld{z}})$, the following diagram is cartesian
\[\begin{tikzcd}
	{\bigcup_{i\in \cJ({\tld{z}}')} \widetilde{D}^\circ({\tld{z}},i)} & {\widetilde{D}({\tld{z}})} \\
	{\widetilde{S}^\circ({\tld{z}})\cup\widetilde{S}^\circ({\tld{z}}') } & {\widetilde{S}({\tld{z}}).}
	\arrow[from=1-1, to=2-1]
	\arrow[hook, from=1-1, to=1-2]
	\arrow[hook, from=2-1, to=2-2]
	\arrow[from=1-2, to=2-2]
\end{tikzcd}\]
\end{lemma}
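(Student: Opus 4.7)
The plan is to stratify the Demazure resolution $\widetilde D(x)$ using the Bruhat decomposition $\bar P_{s_j} = \bar I \sqcup \bar I s_j \bar I$ in each of the $k$ factors, compute the image of each stratum under $\pi_x$, and then read off the preimage of $\widetilde S^\circ(x) \cup \widetilde S^\circ(x')$. Concretely, for each subset $J \subseteq \{1,\dots,k\}$ let
$$\widetilde D^\circ(J) \defeq A_1 \times_{\bar I} A_2 \times_{\bar I} \cdots \times_{\bar I} A_k \;\subset\; \widetilde D(x),$$
with $A_j \defeq \bar I s_j \bar I$ if $j \in J$ and $A_j \defeq \bar I$ otherwise (and the rightmost factor quotiented on the right by $\bar I_1$, as in the definition of $\widetilde D(x)$). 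This gives a stratification $\widetilde D(x) = \bigsqcup_J \widetilde D^\circ(J)$ into locally closed subschemes, and in the notation of the lemma we have $\widetilde D^\circ(\{1,\dots,k\}) = \widetilde D^\circ(x)$ and $\widetilde D^\circ(\{1,\dots,k\}\setminus\{i\}) = \widetilde D^\circ(i)$, so that $\widetilde D^\circ(x,i) = \widetilde D^\circ(\{1,\dots,k\}) \cup \widetilde D^\circ(\{1,\dots,k\}\setminus\{i\})$.

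The key geometric input is the computation of $\pi_x$ on each stratum: writing $y_J$ for the Demazure product of the subword $(s_j)_{j \in J}$ (taken in the natural order), one has $\pi_x(\widetilde D^\circ(J)) \subseteq \widetilde S(y_J)$, with image equal to the open cell $\widetilde S^\circ(y_J)$ exactly when the subword is reduced (in which case $l(y_J) = |J|$; otherwise $l(y_J) < |J|$). This reduces by iteration to the identity $\bar I w \bar I \cdot \bar I s \bar I = \bar I ws \bar I$ when $l(ws) > l(w)$ (and the usual absorbing variant $\bar I w \bar I \cdot \bar I s \bar I = \bar I w \bar I \cup \bar I ws \bar I$ when $l(ws) < l(w)$), a standard consequence of the BN-pair axioms, together with the trivial $\bar I \cdot \bar I s_j \bar I = \bar I s_j \bar I$ which absorbs any inserted $\bar I$ factors.

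Using this, I enumerate preimages. Since $s_1 \cdots s_k$ is a reduced expression for $x$, we have $y_{\{1,\dots,k\}} = x$ and $l(y_J) < k$ for every proper $J$; hence $\pi_x^{-1}(\widetilde S^\circ(x)) = \widetilde D^\circ(x)$. For $\widetilde S^\circ(x')$, a stratum $\widetilde D^\circ(J)$ contributes only if $x' \leq y_J$; combined with $l(y_J) \leq |J|$ and $l(x') = k - 1$, this forces $|J| = k - 1$, $y_J = x'$, and in particular forces the subword to be reduced. Writing $J = \{1,\dots,k\}\setminus\{i\}$, this is precisely the condition $i \in \cJ(x')$. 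Combining, one gets the set-level identity
$$\pi_x^{-1}\bigl(\widetilde S^\circ(x) \cup \widetilde S^\circ(x')\bigr) \;=\; \widetilde D^\circ(x) \;\cup\; \bigcup_{i \in \cJ(x')} \widetilde D^\circ(i) \;=\; \bigcup_{i \in \cJ(x')} \widetilde D^\circ(x,i).$$

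The Cartesianity on the scheme level follows routinely once the set-level identification is in place: $\widetilde D^\circ(x,i)$ is open in $\widetilde D(x)$ because each $\bar I s_j \bar I$ is open in $\bar P_{s_j}$, and $\widetilde S^\circ(x) \cup \widetilde S^\circ(x')$ acquires a natural scheme structure as an open subscheme of the locally closed union $\widetilde S^\circ(x) \cup \widetilde S(x')$ inside $\widetilde S(x)$; open immersions are stable under pullback, so the scheme-theoretic preimage agrees with the set-theoretic one. The main obstacle I anticipate is giving a clean proof of the stratum-wise image statement $\pi_x(\widetilde D^\circ(J)) \subseteq \widetilde S(y_J)$ with equality in the reduced case, which I would handle by induction on $k$, factoring $\pi_x$ as an iterated convolution with the individual $\bar P_{s_j}$'s and invoking the Iwahori product formula at each step.
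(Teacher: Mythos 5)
Your proof is correct and takes the same route as the paper: both hinge on the BN-pair product identity $\bar I s_1 \bar I \cdots \bar I s_m \bar I = \bar I y \bar I$ (for a reduced subword) to identify the image of each stratum of $\widetilde D(x)$ with an open Schubert cell. The paper's own proof is terse to the point of stating only the image computation $\pi_x(\widetilde D^\circ(x,i)) = \widetilde S^\circ(x) \cup \widetilde S^\circ(x_i)$ and then asserting ``the above lemma then follows''; you have explicitly supplied the missing direction, namely the preimage enumeration showing that no stratum $\widetilde D^\circ(J)$ with $J$ outside the indicated collection can meet $\pi_x^{-1}(\widetilde S^\circ(x) \cup \widetilde S^\circ(x'))$, via the length bound $l(y_J) \leq |J|$. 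One small simplification: in the last step you do not need the detour through the ``locally closed union $\widetilde S^\circ(x) \cup \widetilde S(x')$'' — the set $\widetilde S^\circ(x) \cup \widetilde S^\circ(x')$ is already open in $\widetilde S(x)$, since any $v < x$ with $v \neq x'$ satisfies $l(v) \leq k-1$, and $x' \leq v$ would force $v = x'$; so its complement, $\bigcup_{v < x,\, v \neq x'} \widetilde S^\circ(v)$, is closed.
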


\begin{proof}
We will show that 
$\pi_{\tld{z}}(\widetilde{D}^\circ({\tld{z}},i))=
\widetilde{S}^\circ({\tld{z}})\cup\widetilde{S}^\circ({\tld{z}} _i)$. The above lemma then follows.

To verify the equality, we simply note that
$$ \bar I s_1 \bar I\cdots \bar I s_m\bar I=\bar I{\tld{z}} \bar I$$
for any ${\tld{z}}\in W_a$ and a reduced expression $s_1\cdots s_m$ of it.
\end{proof}

Let $s\in S$ be a simple reflection with its parahoric $\bar P_s$.
 In what follows we will prove that the first two columns of table \ref{analogies} are isomorphic. Later we will extend this analogy to exhibit "charts" of $\widetilde{D}^\circ({\tld{z}},i)$.

\begin{table}[h!]
\bgroup
\def\arraystretch{1.5}
\begin{tabular}{|l|l|l|}
\hline
pt        & $\bar I/ \bar I$    &   $\widetilde{D}^\circ(i) $    \\ \hline
$\bA^1_0$ & $\bar I s \bar I/ \bar I$  & $\operatorname{im}(\phi_{\tld{z}})$    \\ \hline
$\bA^1_\infty$   & $s \bar I s \bar I/ \bar I$ & $\operatorname{im}(\psi_{\tld{z}})$ \\ \hline
$\bP^1$   & $\bar P_s / \bar I$  & $\widetilde{D}^\circ({\tld{z}},i)$ \\ \hline
\end{tabular}   
 \caption{Analogies of the geometric objects}
  \label{analogies}
\egroup
\end{table}

Recall that the affine Weyl group $W_a$ is generated, as Coxeter group, by the reflections along the walls of any alcove.
For our purposes it is natural to use the dominant base alcove $A_0$, because $\bar I$ is the stabilizer of this alcove. 
Thus our set $S$ of simple reflections generating $W_a$ are the spherical reflections $s_\alpha$ for $\alpha_{i(i+1)} = \cE_i - \cE_{i+1}$, $1 \leq i \leq n-1$ a simple root, and the affine reflection $s_a = t_{\alpha_{1n}} s_{\alpha_{1n}}$, where $\alpha_{1n} = \cE_1 - \cE_n$ is the highest root.
Corresponding to these simple reflections are certain affine root groups:
\begin{defn} \label{gooddef}
Let $s\in S$.
\begin{itemize}
    \item We define the following roots associated to $s$:
    $$ \bar \alpha_s \defeq \begin{cases} - \alpha_{i(i+1)} = -\cE_i + \cE_{i+1}, & \text{ if }s = s_{\alpha_{i(i+1)}} \\
\alpha_{1n}=  \cE_1 - \cE_n , & \text{ if } s = s_a . \end{cases} $$
    \item We define the affine root group $\bU_s$ as follows.
    For $s$ corresponding to a spherical (resp. affine) simple reflection, we  let $\bU_s \subset \bar I$ be the subscheme defined by valuation bounds $[0,0]$ (resp. $[1,1]$)  in position corresponding to $\bar \alpha_s$ and $\emptyset$ in all other entries.\footnote{The meaning of being defined by valuation bounds is given in Definition \ref{defined-by-valuation-bounds}.}
\end{itemize}
\end{defn}
As for ordinary root groups, we have for each $s \in S$ a canonical homomorphism $u_s : \bG_a \lrisom \bU_s$ such that
$$ t u_s(a) t^{-1} = u_s ( \bar \alpha_s (t) a ), \,\,\,\,\, \text{ for any }t \in T. $$
The definition of $\bU_s$ is made so that the following proposition holds.
\begin{prop}
\label{affine-root-group-lemma}
For $s \in S$ one of the simple reflections described above, we have isomorphisms
\begin{align*}
      \bG_a  \xrightarrow[]{u_s} \bU_s s &  \xrightarrow[]{\phi_s}  \bar Is \bar I/ \bar I \\
    u\cdot s &\mapsto  u\cdot s \bar I
\end{align*}
which can be extended to an isomorphism 
\begin{align*}
    \widetilde{\phi_s}: T \times \bU_s s&\xrightarrow[]{\cong}  \bar Is \bar I / \bar I_1\\
    (t, u\cdot s) &\mapsto u \cdot s\cdot t \bar I_1
\end{align*}
\end{prop}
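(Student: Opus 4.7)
The plan is to establish the three isomorphisms in the statement in sequence, with the middle one carrying the main geometric content.

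The map $u_s : \bG_a \lrisom \bU_s$ is immediate from the definition of $\bU_s$ by valuation bounds. Unwrapping that definition shows $\bU_s$ is the subfunctor of $L\GL_n$ of matrices which agree with the identity except in the single entry indicated by $\bar\alpha_s$, where that entry ranges over a one-dimensional $\F$-line ($\F$-multiples of $1$ for spherical $s$, respectively of $v$ for $s = s_a$); the map $u_s$ records this parameter, and the transformation law $t u_s(a) t^{-1} = u_s(\bar\alpha_s(t) a)$ follows directly from how $T$ acts on that entry by multiplication by $\bar\alpha_s(t)$.

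For $\phi_s$, we appeal to the Iwahori (affine Bruhat--Tits) decomposition $\bar I = \bU_s \cdot (\bar I \cap s\bar I s^{-1})$, which exhibits $\bU_s$ as a set of coset representatives for $\bar I/(\bar I \cap s\bar I s^{-1}) \cong \bar I s\bar I/\bar I$ (the latter identification via the orbit map $i \mapsto i s \bar I$). The key structural input is $\bU_s \cap s\bar I s^{-1} = \{1\}$, which we verify directly from valuation bounds: for spherical $s = s_{\alpha_{i(i+1)}}$, conjugation by $s$ moves the constant entry of $\bU_s$ from position $(i+1,i)$ to position $(i, i+1)$, where $\bar I$ requires valuation at least $1$; for $s = s_a$, a direct computation with the prescribed lift shows $s_a^{-1} \bU_{s_a} s_a$ has entry in position $(n,1)$ of valuation $-1$, incompatible with $\bar I$. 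Combined with the Iwahori decomposition, this identifies $\phi_s \circ u_s$ as the required isomorphism.

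For $\tld{\phi}_s$, we use the semi-direct product decomposition $\bar I = T \ltimes \bar I_1$ (with $\bar I_1 = \ker(\bar I \twoheadrightarrow T)$ normal), which makes $\bar I s\bar I/\bar I_1 \to \bar I s\bar I/\bar I$ into a right $T$-torsor. Since $T$ normalizes $\bar I_1$, any element $usnt$ of $\bar I s\bar I = \bU_s s \bar I_1 T$ (with $u \in \bU_s$, $n \in \bar I_1$, $t \in T$) satisfies $usnt\, \bar I_1 = ust\, \bar I_1$, giving surjectivity. For injectivity, if $ust\, \bar I_1 = u's t'\, \bar I_1$, rearranging yields an expression of the form $\tau m \in \bar I_1$ with $\tau \in T$ and $m \in s^{-1} \bU_s s$; the disjointness $T \cap \bar I_1 = \{1\}$ together with $s^{-1}\bU_s s \cap \bar I = \{1\}$ (from the previous step) forces $\tau = m = 1$, hence $u = u'$ and $t = t'$.

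The main obstacle is the Iwahori decomposition used in the second step: while classical in affine Bruhat--Tits theory, carrying out the verification $\bU_s \cap s\bar I s^{-1} = \{1\}$ and securing the product decomposition requires a careful case analysis in terms of valuation bounds, separating the spherical and affine cases because of the extra $v$-twist in the definition of $\bU_{s_a}$. Once this structural input is secured, the first and third steps are essentially bookkeeping using the inclusion $\bU_s \subset \bar I_1$ (immediate from the valuation bounds) and the decomposition $\bar I = T \ltimes \bar I_1$.
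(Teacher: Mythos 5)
Your proposal is correct and follows essentially the same route as the paper: the core of $\phi_s$ is the orbit--stabilizer identification $\bar I s\bar I/\bar I \cong \bar I/(\bar I\cap s\bar I s^{-1})$ together with the observation (via Proposition~\ref{iwahori-valuation-bounds}) that $\bar I$ and $\bar I\cap s\bar I s^{-1}$ differ in exactly one root position, with the gap filled by $\bU_s$ via Iwahori decomposition. Your reformulation as the pair ``$\bar I = \bU_s\cdot(\bar I\cap s\bar I s^{-1})$ plus $\bU_s\cap s\bar I s^{-1}=\{1\}$'' is the same computation packaged slightly differently, and the $T$-torsor step for $\widetilde{\phi}_s$ (which the paper leaves implicit) is handled correctly via $\bar I = T\ltimes\bar I_1$.
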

\begin{proof}
Note that Proposition \ref{iwahori-valuation-bounds} is directly applicable.
By the orbit-stabilizer theorem, we have an isomorphism
$$ \ovl{I} s \ovl{I} / \ovl{I} \cong \ovl{I} / (s\ovl{I}s^{-1} \cap \ovl{I}) . $$
By Proposition \ref{iwahori-valuation-bounds}, both $\ovl{I}$ and $s \ovl{I} s^{-1} \cap \ovl{I}$ are defined by valuation bounds which only differ in one place:
\begin{itemize}
\item In the case that $s = s_{\alpha_{i(i+1)}}$ is a spherical reflection, $s \ovl{I} s^{-1} \cap \ovl{I}$ has valuation bound $[1,\infty]$ in position $-\alpha_{i(i+1)}$ whereas $\ovl{I}$ has valuation bound $[0,\infty]$ in this position.
\item In the case that $s = t_{\alpha_{1n}} s_{\alpha_{1n}}$ is the affine reflection, $s \ovl{I} s^{-1} \cap \ovl{I}$ has valuation bound $[2,\infty]$ in position $\alpha_{1n}$ whereas $\ovl{I}$ has valuation bound $[1,\infty]$ in this position.
\end{itemize}
The result then follows from Iwahori decomposition.
\end{proof}
\begin{rmk}
The proof actually shows that $ \bU_s(R)s  \lrisom \bar I(R) s \bar I(R) / \bar I(R)$ is an isomorphism for each $\F$-algebra $R$.
In particular, $\bar I s \bar I / \bar I$ is just the presheaf quotient.
\end{rmk}
Having shown that $\bar Is \bar I/ \bar I$ is isomorphic to $\bA^1$, and therefore also its translate $ s \bar Is \bar I / \bar I \subseteq \bar P_s / \bar I$, let us show that they glue together correctly to give an isomorphism $\bar P_s / \bar I \cong \bP^1$.
To prove this we need to specify precisely the chosen representatives for $s \in S$ inside $L\GL_n$.
We make the following choices:
$$ s_{\alpha_{i(i+1)}} = \begin{pmatrix}
1 & & & & & \\
& \ddots & & & & \\
& & 0 & - 1 & & \\
& & 1 & 0 & & \\
& & & & \ddots & \\
& & & & & 1
\end{pmatrix}, \,\,\,\,\,
s_a = \begin{pmatrix}
0 & & & & v \\
& 1 & & & \\
& & \ddots & & \\
& & & 1 & \\
- v^{-1} & & & & 0
\end{pmatrix} .
$$

\begin{lemma}
\label{P1-lemma}
\begin{enumerate}
    \item For $a\in \bG_m(R)$ and a simple reflection $s$, we have the following equality
    $$u_s(a)\cdot s\cdot  u_s(a^{-1})\cdot  s\cdot  u_s(a)\cdot  s=a^{\bar \alpha_s^\vee}. $$
    \item The two isomorphisms
    \begin{align*}
\delta_0:\bA^1_0 & \lrisom \bar I s \bar I / \bar I & \delta_\infty: \bA^1_\infty & \lrisom s \bar I s \bar I  / \bar I \\
a & \mapsto  u_s(a)  s \bar I& b & \mapsto s u_s (-b) s \bar I .
\end{align*}
glue together to an isomorphism $\bP^1 \cong \bar P_s/ \bar I$.
\end{enumerate}
\end{lemma}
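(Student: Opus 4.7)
The plan is to reduce both parts to standard calculations in a rank-one subgroup of $L\GL_n$. For part (1), the three elements $u_s(a)$, $s$, and $a^{\bar\alpha_s^\vee}$ all lie inside a natural copy of $\SL_2$ in $L\GL_n$: for a spherical $s=s_{\alpha_{i(i+1)}}$ this is the $\SL_2$ in the $(i,i{+}1)$-block, with $u_s(a)\leftrightarrow\begin{pmatrix}1 & 0\\ a & 1\end{pmatrix}$, $s\leftrightarrow\begin{pmatrix}0 & -1\\ 1 & 0\end{pmatrix}$, $a^{\bar\alpha_s^\vee}\leftrightarrow\begin{pmatrix}a^{-1} & 0\\ 0 & a\end{pmatrix}$; for the affine $s=s_a$ it is the $\SL_2$-subgroup in the $(1,n)$-block with entries in $\F((v))$, with $u_s(a)\leftrightarrow\begin{pmatrix}1 & av\\ 0 & 1\end{pmatrix}$, $s_a\leftrightarrow\begin{pmatrix}0 & v\\ -v^{-1} & 0\end{pmatrix}$, $a^{\bar\alpha_s^\vee}\leftrightarrow\begin{pmatrix}a & 0\\ 0 & a^{-1}\end{pmatrix}$. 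In both cases the identity reduces to a direct $2\times 2$ matrix multiplication.

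For part (2), I first observe that Proposition \ref{affine-root-group-lemma} directly supplies that $\delta_0$ is an isomorphism onto the open subscheme $\bar I s \bar I/\bar I\subset \bar P_s/\bar I$. The map $\delta_\infty$ factors as the analog of $\delta_0$ composed with left translation by $s$ (an automorphism of $\bar P_s/\bar I$ since $s\in \bar P_s$), so it is also an isomorphism, onto the complementary open subscheme $s\bar I s\bar I/\bar I$.

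The key input is then the transition function on the overlap. Either by rearranging the identity of part (1), or by a direct $2\times 2$ block computation, one checks that for $a\in R^\times$ the element $(s u_s(-a^{-1}) s)^{-1}u_s(a) s$ lies in $\bar I(R)$, whence $\delta_0(a) = \delta_\infty(a^{-1})$. The sign in the definition of $\delta_\infty$ is chosen precisely so that the transition takes the standard form $b = a^{-1}$, which is the usual gluing of two copies of $\bA^1$ into $\bP^1$.

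Finally, the two charts $\bar I s\bar I/\bar I$ and $s\bar I s\bar I/\bar I$ together cover $\bar P_s/\bar I$: by Bruhat decomposition $\bar P_s=\bar I\sqcup \bar I s \bar I$, so $\bar P_s/\bar I$ consists of the identity coset together with the open cell $\bar I s\bar I/\bar I$, the latter being the image of $\delta_0$; the image of $\delta_\infty$ contains the identity coset via $\delta_\infty(0)=s^2\bar I=\bar I$ (using $s^2\in T\subset\bar I$). The main obstacle is the careful bookkeeping for the transition function, but once the identity from part (1) is in place this amounts to a short matrix computation.
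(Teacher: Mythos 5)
Your proof is correct and follows essentially the same strategy as the paper: reduce part (1) to an explicit $\SL_2$ matrix computation in the $(i,i{+}1)$-block (spherical case) or the $(1,n)$-block (affine case), and then derive the transition function $\delta_0(a)=\delta_\infty(a^{-1})$ to glue the two affine charts into $\bP^1$. Your part (2) is in fact a bit more careful than the paper's (which contains a small sign slip in its statement of the transition function and of the element $X\in\bar I$), since you verify explicitly that $\delta_\infty$ is an isomorphism onto its image via translation by $s$ and that the two charts cover $\bar P_s/\bar I$ using $\delta_\infty(0)=s^2\bar I=\bar I$.
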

\begin{proof}
\begin{enumerate}
    \item This can be reduced to an $SL_2$ computation. First, assume $s$ is spherical. Then we can restrict to $\langle e_i,e_{i+1}\rangle$ and the computation becomes
    $$\begin{pmatrix} 1 & 0 \\ a & 1 \end{pmatrix} \begin{pmatrix} 0 & -1 \\ 1 & 0 \end{pmatrix} \begin{pmatrix} 1 & 0 \\ a^{-1} & 1 \end{pmatrix} \begin{pmatrix} 0 & -1 \\ 1 & 0 \end{pmatrix} \begin{pmatrix} 1 & 0 \\ a & 1 \end{pmatrix} \begin{pmatrix} 0 & -1 \\ 1 & 0 \end{pmatrix}= \begin{pmatrix} a^{-1} & 0 \\ 0 & a \end{pmatrix}. $$
    Now assume $s$ is the affine reflection. Then we can restrict to $\langle e_1,e_{n}\rangle$ and the computation becomes 
        $$\begin{pmatrix} 1 & av \\ 0 & 1 \end{pmatrix} \begin{pmatrix} 0 & v \\ -v^{-1} & 0 \end{pmatrix} \begin{pmatrix} 1 & \frac{v}{a} \\ 0 & 1 \end{pmatrix} \begin{pmatrix} 0 & v \\ -v^{-1} & 0 \end{pmatrix} \begin{pmatrix} 1 & av \\ 0 & 1 \end{pmatrix} \begin{pmatrix} 0 & v \\ -v^{-1} & 0 \end{pmatrix}= \begin{pmatrix} a & 0 \\ 0 & a^{-1} \end{pmatrix}. $$
    \item We can rewrite the equality from the first part as
    \begin{equation*}\label{transfunc}
        \delta_\infty(a)= (s u_s (-a) s) \bar I = (u_s(a^{-1})  s X) \bar I=\delta_0(a^{-1})
    \end{equation*}
    where $X = u_s(a) (-a)^{\bar \alpha_s^\vee} \in \bar I$. In fact
    $$ \im(\delta_0)\cap \im(\delta_\infty)=\bar I s \bar I-\{s\}=s\bar I s \bar I-\{1\}$$
    and the maps glue together to the desired isomorphism.
\end{enumerate}
\end{proof}
Now let us go from the case of $ \bar P_s/ \bar I$ to the intermediate Demazure resolution $\widetilde{D}^\circ({\tld{z}},i)$. This is the object that we want to cover by two affine opens.

To do so, let us define two maps: 

\begin{defn}
Define 
$$\widetilde{\phi_{\tld{z}}}: \bU_{s_1}s_1 \times \bU_{s_2}s_2  \times \cdots \times  \bU_{s_k}s_k\times T \to \widetilde{D}^\circ({\tld{z}},i) ,$$
$$\widetilde{\psi_{\tld{z}}}: \bU_{s_1}s_1 \times  \bU_{s_2}s_2 \times \cdots \times s_i \bU_{s_i} s_i  \times \cdots \times  \bU_{s_k}s_k \times T \to \widetilde{D}^\circ({\tld{z}},i)$$
as composition
$$\bU_{s_1}s_1 \times \bU_{s_2}s_2  \times \cdots \times  \bU_{s_k}s_k\times T \to  \bar I s_1 \bar I\times \cdots\times \bar P_{s_i}\times \cdots \times \bar I s_k \bar I_1                 \to \widetilde{D}^\circ({\tld{z}},i)$$
where the first map comes from Lemma \ref{affine-root-group-lemma} and the second map is the quotient map. $\widetilde{\psi_{\tld{z}}}$ is  then defined in a similar fashion but embedding $s_i \bU_{s_i} s_i$ instead of $ \bU_{s_i} s_i$ into $\bar P_{s_i}$.
\end{defn}

\begin{rmk}
\begin{itemize}
By omitting the torus we also get maps
$$ \phi_{\tld{z}}: \bU_{s_1}s_1 \times \bU_{s_2}s_2  \times \cdots \times  \bU_{s_k}s_k \to {D}^\circ({\tld{z}},i)$$
and similarly for $\psi_{\tld{z}}$ such that $\phi_{\tld{z}}\times \id_T=\widetilde{\phi_{\tld{z}}}$ and $\psi_{\tld{z}}\times \id_T=\widetilde{\psi_{\tld{z}}}$.
\end{itemize}
\end{rmk}

\begin{lemma}
\label{open-immersion-1}
The above map $\widetilde{\phi_{\tld{z}}}$ is an open immersion with image equal to $\widetilde{D}^\circ({\tld{z}})$.
\end{lemma}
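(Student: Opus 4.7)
The plan is to factor $\widetilde{\phi_x}$ through the chain of inclusions
\[
\bU_{s_1} s_1 \times \cdots \times \bU_{s_k} s_k \times T \;\xrightarrow{\;\sim\;}\; \widetilde{D}^\circ(x) \;\hookrightarrow\; \widetilde{D}^\circ(x,i),
\]
where the first arrow will be proved to be an isomorphism and the second will be shown to be an open immersion.

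First I would check that $\widetilde{\phi_x}$ actually lands in $\widetilde{D}^\circ(x)$. Unwinding the definition, $\widetilde{\phi_x}(u_1 s_1,\dots,u_k s_k,t)$ is the class of $(u_1 s_1,\, u_2 s_2,\, \dots,\, u_{k-1} s_{k-1},\, u_k s_k t)$ in $\widetilde{D}(x)$. Each $u_j s_j$ lies in $\bar{I} s_j \bar{I}$, and since $t \in T \subset \bar{I}$ we also have $u_k s_k t \in \bar{I} s_k \bar{I}$, so the image lies in $\widetilde{D}^\circ(x)$.

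Next, to show that the induced map $\widetilde{\phi_x} : \prod_{j=1}^k \bU_{s_j} s_j \times T \to \widetilde{D}^\circ(x)$ is an isomorphism, I would use the quotient description
\[
\widetilde{D}^\circ(x) = \bigl( \bar{I} s_1 \bar{I} \times \cdots \times \bar{I} s_{k-1} \bar{I} \times \bar{I} s_k \bar{I} \bigr) \big/ \bigl( \bar{I}^{k-1} \times \bar{I}_1 \bigr)
\]
and normalize representatives one factor at a time. By Proposition \ref{affine-root-group-lemma}, every element of $\bar{I} s_j \bar{I}/\bar{I}$ has a unique representative of the form $u_j s_j$ with $u_j \in \bU_{s_j}$, and every element of $\bar{I} s_k \bar{I}/\bar{I}_1$ has a unique representative of the form $u_k s_k t$ with $u_k \in \bU_{s_k}$ and $t \in T$. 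Applying this normalization inductively in the $j$-th factor (the right $\bar{I}$-action used to normalize the $j$-th coordinate is uniquely determined, so the next coordinate is then well-defined modulo a further right $\bar{I}$-action, allowing the induction to proceed), one obtains a scheme-theoretic inverse to $\widetilde{\phi_x}$. This works over any $\F$-algebra $R$ by the functorial form of Proposition \ref{affine-root-group-lemma}.

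Finally, I would observe that $\widetilde{D}^\circ(x) \hookrightarrow \widetilde{D}^\circ(x,i)$ is an open immersion simply because the only difference between the two is in the $i$-th factor, where $\bar{I} s_i \bar{I} \subset \bar{P}_{s_i}$ is an open subscheme (it is the complement of the closed point $\bar{I}/\bar{I}$ inside $\bar{P}_{s_i}/\bar{I} \cong \bP^1$ from Lemma \ref{P1-lemma}), and this openness is preserved under the quotient defining the Demazure. Composing, $\widetilde{\phi_x}$ is an open immersion with image $\widetilde{D}^\circ(x)$. I do not anticipate any serious obstacle here; the only mildly delicate point is making the inductive normalization argument rigorous as a statement about schemes rather than just points, which is handled by the fact that Proposition \ref{affine-root-group-lemma} gives the isomorphism on $R$-points functorially in $R$.
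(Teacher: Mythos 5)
Your proof is correct and follows essentially the same route as the paper: both hinge on iterating Proposition \ref{affine-root-group-lemma} and on the observation (recorded in the remark after that proposition) that $\bar{I} s \bar{I}/\bar{I}$ is already the \emph{presheaf} quotient, so the normalization of representatives can be carried out functorially in $R$. The only real difference is one of emphasis: the paper establishes that $\phi_x$ is a monomorphism (by uniqueness of the normalized representative at each stage, the same induction you describe) and then identifies the image with the open cell, whereas you go one step further and explicitly exhibit the inverse, which makes the conclusion that $\widetilde{\phi_x}$ is an isomorphism onto $\widetilde{D}^\circ(x)$ more immediate. Your final remark that $\widetilde{D}^\circ(x) \hookrightarrow \widetilde{D}^\circ(x,i)$ is open is correct but is not needed for this lemma as stated, since $\widetilde{D}^\circ(x)$ is open in $\widetilde{D}(x)$ by construction; it only becomes relevant when you start extending functions across $\widetilde{D}^\circ(x,i)$.
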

\begin{proof}
It suffices to show this for $\phi_{\tld{z}}$. We will show that $\phi_{\tld{z}}$ is a monomorphism regarded as natural transformation between the functors representing the above varieties (say on $\mathbb{F}$-algebras). Since sheafification is left-exact, it preserves monomorphisms. Therefore it suffices to show that $\phi_{\tld{z}}$ is a monomorphism before sheafifying the target, i.e. we can just take the naive quotient by $ \bar I^{k-1}\times \bar I_1(R)$. Now we compute in explicit coordinates. Suppose $\phi_{\tld{z}}( u_1s_1, \cdots u_k s_k)=\phi_{\tld{z}}( u'_1s_1,\cdots u'_ks_k )$ for $u_l,u'_l \in \bU_{s_l}(R)$. Hence there exist $X_k\in \bar I_1(R)$ and $X_1 \cdots, X_{k-1} \in I(R)$, such that $$(u_1 s_1,\cdots, u_k s_k)=(u'_1 s_1 X_1 , X_1^{-1} u'_2 s_2 X_2,\cdots, X_{k-1}^{-1} u'_k s_k X_k). $$ 
Thus the equality
$$ u_1 s_1 =u'_1 s_1 X_1 $$
implies an equality $  u_1 s_1 \cdot \bar I=  u_1' s_1\cdot \bar I$ inside $ \bar I s  \bar I/ \bar I$. But by \ref{affine-root-group-lemma}, this already implies $u_1=u_1'$. For $u_2, u'_2$ we get a similar equation and therefore we obtain inductively that $X_l=1, u_l=u'_l$ for all $l$, i.e. $\phi_{\tld{z}}$ is indeed a monomorphism.

That its image is equal to the open cell, follows from
$$ {D}^\circ({\tld{z}})\cong {S}^\circ({\tld{z}})=\bar I {\tld{z}}\bar I_1=\bar I s_1\bar Is_2\bar I \cdots \bar Is_k\bar I.$$
\end{proof}

\begin{lemma}
The map $\widetilde{\psi_{\tld{z}}}$ is an open immersion and its image contains $\widetilde{D}^\circ(i) $.
\end{lemma}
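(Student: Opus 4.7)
The plan is to mirror the proof of Lemma \ref{open-immersion-1} almost verbatim, replacing the role of the $i$-th factor $\bU_{s_i}s_i \cong \bar I s_i \bar I/\bar I$ by its translate $s_i\bU_{s_i}s_i \cong s_i\bar Is_i\bar I/\bar I$. The crucial input is Lemma \ref{P1-lemma}(2): under the identification $\bar P_{s_i}/\bar I \cong \bP^1$, the subset $s_i\bar Is_i\bar I/\bar I$ is the open chart corresponding to $\bA^1_\infty$, with coordinate $b$ given by $\delta_\infty\colon b\mapsto s_iu_{s_i}(-b)s_i\,\bar I$.

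First, I would prove that $\widetilde{\psi_x}$ is a monomorphism by repeating the inductive argument of Lemma \ref{open-immersion-1}. Since sheafification preserves monomorphisms, it suffices to check this at the level of presheaves on $\bF$-algebras, i.e.\ with the naive quotient by $\bar I^{k-1}\times \bar I_1(R)$. If two tuples $(u_1s_1,\dots,s_iu_is_i,\dots,u_ks_k)$ and $(u_1's_1,\dots,s_iu_i's_i,\dots,u_k's_k)$ (with $u_l,u_l'\in \bU_{s_l}(R)$) are identified by right-translation by some $(X_1,\dots,X_{k-1},X_k)\in \bar I^{k-1}\times \bar I_1$, then walking from left to right through the coordinates and using injectivity of $\bU_{s_l}s_l \hookrightarrow \bar Is_l\bar I/\bar I$ for $l\ne i$ (Proposition \ref{affine-root-group-lemma}) and of $s_i\bU_{s_i}s_i \hookrightarrow s_i\bar Is_i\bar I/\bar I$ for $l=i$ (Lemma \ref{P1-lemma}), one concludes inductively $X_l=1$ and $u_l=u_l'$ for every $l$.

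Next, for openness, let $V\subset \widetilde{D}^\circ(x,i)$ denote the locally closed subscheme obtained by replacing the $i$-th factor $\bar P_{s_i}$ by the open subscheme $s_i\bar Is_i\bar I\subset \bar P_{s_i}$ (open by Lemma \ref{P1-lemma}(2)). Then $V$ is open in $\widetilde{D}^\circ(x,i)$, the map $\widetilde{\psi_x}$ factors through $V$, and exactly the same coordinate computation as in Lemma \ref{open-immersion-1}, now using the $\delta_\infty$ chart on the $i$-th factor, shows that $\widetilde{\psi_x}$ is an isomorphism onto $V$.

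Finally, for the containment $\widetilde{D}^\circ(i)\subset \im(\widetilde{\psi_x})$, observe that $\widetilde{D}^\circ(i)$ is the subscheme of $\widetilde{D}(x)$ whose $i$-th coordinate is the trivial coset $\bar I/\bar I$. Under $\delta_\infty$, setting $b=0$ gives $s_i\cdot 1\cdot s_i\,\bar I = s_i^2\,\bar I$, and a direct check on the explicit matrix lifts fixed in the introduction shows that $s_i^2\in T\subset \bar I$ (for $s=s_{\alpha_{i(i+1)}}$ one has $s^2=-I$ on the $(i,i{+}1)$-block, and for $s=s_a$ one has $s^2=\mathrm{diag}(-1,1,\dots,1,-1)$), so $s_i^2\bar I$ is the identity coset. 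Hence $\widetilde{D}^\circ(i)$ is contained in the image. The only mildly subtle point in the whole argument is this matching of the identity coset with the origin of the $\delta_\infty$ chart, which is really just bookkeeping about the chosen lifts; the rest is a mechanical transcription of Lemma \ref{open-immersion-1}.
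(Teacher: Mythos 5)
The monomorphism argument and the identification of $\widetilde{D}^\circ(i)$ with the image of the $\delta_\infty$-origin are both fine and match the paper, but your argument for openness has a genuine gap. You claim that $\widetilde{\psi_x}$ is an isomorphism onto the open set $V$ obtained by replacing the $i$-th factor $\bar P_{s_i}$ by $s_i\bar I s_i\bar I$. The problem is that $s_i\bar I s_i\bar I$ is right-$\bar I$-stable but \emph{not} left-$\bar I$-stable (only $\bar P_{s_i}=\bar I s_i\bar Is_i\bar I$ is), so this subset is not saturated for the $\bar I^{k-1}\times\bar I_1$-action on $\bar P_{s_1}\times\cdots\times\bar P_{s_k}$, and in fact $\im(\widetilde{\psi_x})\subsetneq V$ whenever $i>1$. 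Concretely, take $k=2$, $i=2$, $Y_1\in\bar I$ with $s_2^{-1}Y_1 s_2\notin\bar I$, and the point $[u_1(a_1)s_1 Y_1,\ Y_1 s_2]$: both coordinates lie in $\bar Is_1\bar I$ and $s_2\bar Is_2\bar I$ respectively, so the point is in $V$, yet reducing to the normal form via $X_1=Y_1^{-1}$ gives $[u_1(a_1)s_1,\ s_2]$, whose second coordinate is the missing point of the $\delta_\infty$ chart and hence not in the image of $\widetilde{\psi_x}$. The ``same coordinate computation as Lemma \ref{open-immersion-1}'' therefore does not establish surjectivity onto $V$: the inductive reduction moves the $i$-th coordinate out of the chart. (The surjectivity argument in Lemma \ref{open-immersion-1} relies on the image under $\pi_x$ being a single open Schubert cell, which has no analogue here.)

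The paper sidesteps this entirely by never identifying the image: it shows $\widetilde{\psi_x}$ is a monomorphism locally of finite type (hence unramified), observes that source and target are smooth of the same dimension so the induced maps on completed local rings are isomorphisms (hence $\widetilde{\psi_x}$ is \'etale), and concludes via the fact that an \'etale monomorphism is an open immersion. If you want to keep your approach you would need to identify $\im(\widetilde{\psi_x})$ correctly — it is a proper open subset of $V$ cut out by conditions from the intermediate $X_l$'s — which is more delicate than the statement of the lemma requires.
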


\begin{proof}
The second part is immediate as $1\in P_s$ is contained in the image of $\psi_{\tld{z}}$. For the rest of the proof we note that since the source of $\psi_{\tld{z}}$ is locally of finite type over $\mathbb{F}$, so will be the morphism $\psi_{\tld{z}}$ itself (see \cite[\href{https://stacks.math.columbia.edu/tag/01T8}{Tag 01T8}]{stacks-project}) and we will implicitly use this throughout the proof. To prove the first part, let us first show that it is an monomorphism. 

Now we proceed as in the proof from the previous lemma. Therefore let us suppose that  $\psi_{\tld{z}}( u_1s_1, \cdots, s_i u_i s_i, \cdots, u_k s_k)=\psi_{\tld{z}}( u'_1s_1,\cdots, s_i u'_i s_i, \cdots, u'_ks_k )$ for $u_l,u'_l \in \bU_{s_l}(R)$.  
By the definition of $D({\tld{z}}) $, there exist $X_i\in I(R)$, such that 
$$(u_1 s_1,\cdots, s_i u_i s_i, \cdots ,s_ku_k)=(u'_1 s_1 X_1 , X_1^{-1} u'_2 s_2 X_2,\cdots, X_{i-1}^{-1} s_i u'_i s_i X_i, \cdots, X_{k-1}^{-1} u'_k s_k X_k). $$ 
Then we can obtain inductively that $u_1'=u_1, \cdots, u'_{i-1}=u_{i-1}$ and $ X_1=\cdots=X_{i-1}=1.$ For $u_i,u'_i$ we can argue in a similar way since \ref{affine-root-group-lemma} implies that $s\bU s\cong s\bar I s \bar I/\bar I$. And we proceed again inductively to show that $X_l=1, u_l=u'_l$ for all $l$, i.e. $\psi_{\tld{z}}$ is indeed a monomorphism.

Having proven that it is indeed a monomorphism, by \cite[\href{https://stacks.math.columbia.edu/tag/05VH}{Tag 05VH}]{stacks-project} we conclude that $\psi_{\tld{z}}$ is in fact unramified. By \cite[\href{https://stacks.math.columbia.edu/tag/039H}{Tag 039H}]{stacks-project} this implies that it is a surjection on the completion of the local rings. Since both source and target of the morphism are smooth of the same dimension, this implies that on the completion of local rings, $\psi_{\tld{z}}$ is an isomorphism. In particular it is \'etale (see also \cite[\href{https://stacks.math.columbia.edu/tag/039M}{Tag 039M}]{stacks-project}). Therefore $\psi_{\tld{z}}$ is an \'etale monomorphism and thus an open immersion \cite[\href{https://stacks.math.columbia.edu/tag/025G}{Tag 025G}]{stacks-project}. 
\end{proof} 

\begin{rmk}
The reason that we insert $s_i \bU_{s_i} s_i$ at the $i$-th position of the domain of $\widetilde{\psi_{\tld{z}}}$ rather than just delete the $i$-th position is that we want $\widetilde{\psi_{\tld{z}}}$ to be an open immersion.
\end{rmk}

\subsection{Extending functions} \label{Extending functions}

Ultimately, we are interested in showing that certain functions on $\tld{S}^\circ({\tld{z}})$ extend to $\tld{S}({\tld{z}})$. Let us first show that this problem can be reduced to extending functions on intermediate Demazure resolutions.
\begin{lemma}\label{extend}
    Let $f:\widetilde{S}^\circ({\tld{z}})\to \bA^1 $ be a function and denote by $f':\widetilde{D}^\circ({\tld{z}})\to \bA^1$ its pullback along the isomorphism $\widetilde{D}^\circ(\tld{{\tld{z}}})\cong \widetilde{S}^\circ({\tld{z}})$. 
    Assume that for all $i\in \{1,..,l({\tld{z}})\}$  the function $f'$ extends
\[\begin{tikzcd}
	{\widetilde{D}^\circ({\tld{z}})} & {\widetilde{D}^\circ({\tld{z}},i)} \\
	{\bA^1.}
	\arrow[hook, from=1-1, to=1-2]
	\arrow["{f'}"', from=1-1, to=2-1]
	\arrow[dashed, from=1-2, to=2-1]
\end{tikzcd}\]
Then the function $f$ extends
\[\begin{tikzcd}
	{\widetilde{S}^\circ({\tld{z}})} & {\widetilde{S}({\tld{z}})} \\
	{\bA^1.}
	\arrow[hook, from=1-1, to=1-2]
	\arrow["{f}"', from=1-1, to=2-1]
	\arrow[dashed, from=1-2, to=2-1]
\end{tikzcd}\]
\end{lemma}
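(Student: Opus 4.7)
The plan is to combine three ingredients: the fact (Proposition \ref{main0}) that $\pi_x \colon \widetilde{D}(x) \to \widetilde{S}(x)$ is a proper surjection with $(\pi_x)_* \cO_{\widetilde{D}(x)} \cong \cO_{\widetilde{S}(x)}$; the cartesian square of Lemma \ref{pullback}; and normality of $\widetilde{S}(x)$ (which follows from the classical normality of $S(x)$, because $\widetilde{S}(x) \to S(x)$ is a $T$-torsor, hence smooth). Normality reduces the problem to extending $f$ across the codimension-one strata: any regular function on the complement of a closed subset of codimension $\geq 2$ in the normal scheme $\widetilde{S}(x)$ extends uniquely to the whole of $\widetilde{S}(x)$ by Hartogs. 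Since the boundary $\widetilde{S}(x) \setminus \widetilde{S}^\circ(x)$ is the union of the Schubert subvarieties $\widetilde{S}(x')$ with $x' < x$, and the codimension-one strata are exactly the $\widetilde{S}^\circ(x')$ with $l(x') = l(x)-1$, it is enough to extend $f$ across each such $\widetilde{S}^\circ(x')$.

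Fix one such $x'$ and set $U' \defeq \widetilde{S}^\circ(x) \cup \widetilde{S}^\circ(x') \subset \widetilde{S}(x)$, an open subscheme. By Lemma \ref{pullback} we have $\pi_x^{-1}(U') = \bigcup_{i \in \cJ(x')} \widetilde{D}^\circ(x,i)$. By hypothesis, $f'$ extends to a regular function $f'_i \colon \widetilde{D}^\circ(x,i) \to \bA^1$ for every $i$. Each pair $f'_i, f'_j$ agrees with $f'$ on the open $\widetilde{D}^\circ(x)$, which is contained in (and, by irreducibility of $\widetilde{D}(x)$, dense in) the reduced intersection $\widetilde{D}^\circ(x,i) \cap \widetilde{D}^\circ(x,j)$. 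Since $\bA^1$ is separated, the $f'_i$ agree on pairwise overlaps and therefore glue to a regular function $g \colon \pi_x^{-1}(U') \to \bA^1$.

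Now I push $g$ down. The restriction of $\pi_x$ is a proper morphism $\pi \colon \pi_x^{-1}(U') \to U'$, and the sheaf isomorphism of Proposition \ref{main0}, restricted to the open $U'$, gives $\pi_* \cO_{\pi_x^{-1}(U')} \cong \cO_{U'}$. Taking global sections, $g$ descends to a regular function $f_{U'} \colon U' \to \bA^1$ whose restriction to $\widetilde{S}^\circ(x)$ agrees with $f$. Performing this construction for every codimension-one $x'$ yields extensions which all restrict to $f$ on the dense open $\widetilde{S}^\circ(x)$ and therefore patch to a regular function on an open $V \subset \widetilde{S}(x)$ whose complement has codimension $\geq 2$; normality then produces the desired extension of $f$ to all of $\widetilde{S}(x)$.

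The only delicate step is the gluing of the $f'_i$ upstairs: it uses that $\widetilde{D}^\circ(x)$ is dense in each pairwise intersection $\widetilde{D}^\circ(x,i) \cap \widetilde{D}^\circ(x,j)$, which in turn uses irreducibility of $\widetilde{D}(x)$. Everything else is a formal application of the pushforward identity restricted to an open, and of Hartogs on the normal Schubert variety; no calculation specific to the affine Weyl group or the reduced expression is needed here, since the combinatorics has already been absorbed into Lemma \ref{pullback}.
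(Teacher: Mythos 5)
Your proof follows essentially the same route as the paper's: reduce by normality of Schubert varieties and Hartogs to extending across the codimension-one strata, then use the cartesian square of Lemma~\ref{pullback} together with the pushforward isomorphism $\mathcal{O}_{\widetilde{S}(x)} \lrisom (\pi_x)_* \mathcal{O}_{\widetilde{D}(x)}$ from Proposition~\ref{main0} to descend the extension. Your careful gluing of the $f'_i$ across pairwise overlaps (via irreducibility of $\widetilde{D}(x)$ and separatedness of $\bA^1$) makes explicit a point the paper's proof leaves implicit, and working one codimension-one $x'$ at a time rather than with the full union $\bigcup_i \widetilde{D}^\circ(x,i)$ is a cosmetic difference in organization, not substance.
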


\begin{proof}
Since Schubert varieties are normal by \cite[Theorem 0.3]{pappas-rapoport} $\tld{S}(\tld{z}^*)$, we can use the algebraic version of the Hartog's lemma (see \cite[Tag 031T]{stacks-project}) and it suffices to extend along the codimension $1$ strata. So it suffices to extend the function $f$ from $\widetilde{S}^\circ({\tld{z}})$ to
$\widetilde{S}^\circ({\tld{z}}) \cup \bigcup_\cI \widetilde{S}^\circ({\tld{z}}')$, where $\cI=\{{\tld{z}}' | {\tld{z}}'<{\tld{z}} \ , \  l({\tld{z}}')=l({\tld{z}})-1 \}$. Note that the latter space is an open subscheme of $\widetilde{S}({\tld{z}})$ by the stratification property. 
    Now that by Lemma \ref{pullback} the following diagram is cartesian
\[\begin{tikzcd}
	{\widetilde{D}^\circ({\tld{z}})} & {\bigcup_{i\in \{1,...,l({\tld{z}})\}} \widetilde{D}^\circ({\tld{z}},i)} & {\widetilde{D}({\tld{z}})} \\
	{\widetilde{S}^\circ({\tld{z}})} & {\widetilde{S}^\circ({\tld{z}})\cup\bigcup_\cI \widetilde{S}^\circ({\tld{z}}')} & {\widetilde{S}({\tld{z}}).}
	\arrow["\cong"', from=1-1, to=2-1]
	\arrow[hook, from=1-1, to=1-2]
	\arrow[hook, from=2-1, to=2-2]
	\arrow[from=1-2, to=2-2]
	\arrow[hook, from=1-2, to=1-3]
	\arrow[hook, from=2-2, to=2-3]
	\arrow[from=1-3, to=2-3]
	\arrow["\lrcorner"{anchor=center, pos=0.125}, draw=none, from=1-2, to=2-3]
\end{tikzcd}\]
By Proposition \ref{main0}, the rightmost morphism fulfills $\mathcal{O}_{\widetilde{S}({\tld{z}})} \lrisom (\pi_{{\tld{z}}})_* \mathcal{O}_{\widetilde{D}({\tld{z}})}$ and therefore so does its pullback along the bottom right inclusion, in particular we get an isomorphism on global sections.

By our assumption $f'$ extends to a function on $\bigcup_\cI \widetilde{D}^\circ({\tld{z}},{\tld{z}}')$. Since we have the aforementioned isomorphism on global sections, this extends $f$ from $\widetilde{S}^\circ({\tld{z}})$ to
$\widetilde{S}^\circ({\tld{z}}) \cup \bigcup_\cI \widetilde{S}^\circ({\tld{z}}')$ and we are done.
\end{proof}

The functions we consider are all examples of the following construction:
Consider the isomorphisms
\begin{align*}
\tld{D}^\circ({\tld{z}}) \lrisom \tld{S}^\circ({\tld{z}}) & \lrisom  S^\circ({\tld{z}})\times T \\
A {\tld{z}} t \bar I_1 & \mapsto ( A {\tld{z}} \bar I_1, t) \,\,\,\,\, (A \in \bar I_1) .
\end{align*}
Here the first isomorphism $(X_1,\dots,X_k) \mapsto X_1 \cdots X_k \bar I_1$ is due to remark \ref{isomorphiccells} and the second isomorphism corresponds to a trivialization of the $T$-torsor $\tld{S}^\circ({\tld{z}}) \to S^\circ({\tld{z}})$.
\begin{defn}
\label{function-on-demazure-resolution}
For any function $f : T \to \bA^1$, define $x_f : \tld{D}^\circ({\tld{z}}) \to \bA^1$ as the composite map 
$x_f : \tld{D}^\circ({\tld{z}}) \lrisom  S^\circ({\tld{z}}) \times T \stackrel{\pr_T}{\longrightarrow} T \stackrel{f}{\longrightarrow} \bA^1 ,$
where the first isomorphism is the one above.
\end{defn}
\begin{lemma}
\label{function-of-t}
Suppose $t \in T(R)$ and $X_i \in \ovl{I}_1(R) s_i \ovl{I}_1(R)$ for $1 \leq i \leq k$. Then
$$ x_f(X_1,X_2,\dots,X_k t) = f(t) . $$
\end{lemma}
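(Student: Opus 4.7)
The plan is to use the open immersion $\tld{\phi}_x$ from Lemma \ref{open-immersion-1}, which identifies $\tld{D}^\circ(x) \cong \bU_{s_1} s_1 \times \bU_{s_2} s_2 \times \cdots \times \bU_{s_k} s_k \times T$. Under this parametrization the last $T$-factor matches, via the canonical isomorphism $\tld{D}^\circ(x) \lrisom \tld{S}^\circ(x)$, the trivialization $\tld{S}^\circ(x) \cong S^\circ(x) \times T$ of Definition \ref{function-on-demazure-resolution}. Thus it suffices to verify that $(X_1, X_2, \ldots, X_{k-1}, X_k t)$ is equivalent in $\tld{D}^\circ(x)$ to $\tld{\phi}_x(u_1 s_1, \ldots, u_k s_k, t)$ for some choice of $u_i \in \bU_{s_i}$; the conclusion $\lambda_f(X_1, \ldots, X_k t) = f(t)$ then follows by projecting onto $T$ and applying $f$.

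The essential ingredient is a pro-$v$ refinement of the first assertion of Proposition \ref{affine-root-group-lemma}: namely, that the composite $\bU_s \hookrightarrow \bar I_1 s \bar I_1 \twoheadrightarrow \bar I_1 s \bar I_1 / \bar I_1$ is a bijection. This follows by rerunning the Iwahori-decomposition argument of Proposition \ref{affine-root-group-lemma} with $\bar I_1$ in place of $\bar I$, after observing via Proposition \ref{iwahori-valuation-bounds} that $\bU_s \subseteq \bar I_1$ and $\bU_s \cap s \bar I_1 s^{-1} = \{1\}$: the valuation bound at position $\bar \alpha_s$ jumps from $[0,0]$ (resp.\ $[1,1]$) in $\bU_s$ to $\geq 1$ (resp.\ $\geq 2$) in $s \bar I_1 s^{-1}$, so the intersection vanishes.

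Granting this refinement, I proceed by iterative cleanup inside the equivalence relation defining $\tld{D}^\circ(x)$. For each $i < k$ I write $X_i = u_i s_i h_i$ with $u_i \in \bU_{s_i}$ and $h_i \in \bar I_1$, and then act by $p_i := h_i^{-1} \in \bar I_1 \subseteq \bar I$: this replaces the $i$-th coordinate with $u_i s_i$ and modifies the $(i+1)$-th coordinate by left multiplication by $h_i$, preserving membership in $\bar I_1 s_{i+1} \bar I_1$. After $k-1$ iterations the last coordinate has the form $Y \cdot t$ for some $Y \in \bar I_1 s_k \bar I_1$. Writing $Y = u_k s_k h_k$ as above and using that $T$ normalizes $\bar I_1$, I rewrite
\[ Y t = u_k s_k h_k t = u_k s_k t \cdot (t^{-1} h_k t), \qquad t^{-1} h_k t \in \bar I_1, \]
and act by $p_k := (t^{-1} h_k t)^{-1} \in \bar I_1$ to clean the last coordinate to $u_k s_k t$. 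I do not foresee any serious obstacle: all the content is carried by the refined decomposition $\bar I_1 s \bar I_1 = \bU_s s \bar I_1$ and by the fact that $T$ normalizes $\bar I_1$, so that the extra $t$ can be ``pushed past'' the $\bar I_1$-residue in the final coordinate without affecting the $T$-part.
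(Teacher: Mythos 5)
Your iterative cleanup in the third paragraph is correct: using the pro-$v$ decomposition $\ovl{I}_1 s_i\ovl{I}_1 = \bU_{s_i}s_i\ovl{I}_1$, one can replace the representative $(X_1,\dots,X_{k-1},X_k t)$ by $(u_1 s_1,\dots,u_{k-1}s_{k-1},u_k s_k t)$ without changing the class in $\tld{D}^\circ(x)$. But this only reduces the lemma to the special case $X_i = u_i s_i$ with $u_i\in\bU_{s_i}$, and that special case is precisely your first-paragraph assertion that ``the last $T$-factor matches the trivialization.'' You state that assertion as if it followed from Lemma \ref{open-immersion-1}, but that lemma only says $\widetilde{\phi_x}$ is an open immersion onto $\tld{D}^\circ(x)$; it carries no information about how the $T$-coordinate of $\widetilde{\phi_x}$ compares with the trivialization $Axt\,\ovl{I}_1\mapsto (Ax\,\ovl{I}_1,t)$, $A\in\ovl{I}_1$, of Definition \ref{function-on-demazure-resolution}. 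So the argument is circular: the real content of the lemma is buried in an unproven claim, and the cleanup adds no leverage.

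What is actually needed is the containment $u_1 s_1 u_2 s_2\cdots u_k s_k\in\ovl{I}_1 x\ovl{I}_1$, or more generally $X_1\cdots X_k\in\ovl{I}_1 x\ovl{I}_1$; only then can one write $X_1\cdots X_k\,t\,\ovl{I}_1 = AxBt\,\ovl{I}_1 = Axt(t^{-1}Bt)\,\ovl{I}_1$ with $A,\,t^{-1}Bt\in\ovl{I}_1$ and read off $t$ as the $T$-component. This is the pro-$v$ Iwahori braid relation $\ovl{I}_1 s_1\ovl{I}_1\cdots\ovl{I}_1 s_k\ovl{I}_1\subseteq\ovl{I}_1 x\ovl{I}_1$ for a reduced word, which is exactly the nontrivial input the paper invokes from \cite[Corollary 2.5]{propiwa}; given it, the lemma is a two-line computation and your cleanup is unnecessary. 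If you wish to avoid the citation, you could instead note that the two $T$-equivariant trivializations of $\tld{D}^\circ(x)\to S^\circ(x)$ differ by a morphism $\bA^k\to T$, which must be constant (there are no nonconstant units on $\bA^k$) and equals $1$ at the origin $u_1=\cdots=u_k=1$, where $u_1 s_1\cdots u_k s_k = s_1\cdots s_k = x$ by the fixed choice of lifts. Either way, some such argument has to be supplied.
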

\begin{proof}
Using the fact that $\bar I_1 s_1 \bar I_1 s_2 \bar I_1 \cdots \bar I_1 s_k \bar I_1 \subseteq \bar I_1 {\tld{z}} \bar I_1$ \cite[Corollary 2.5]{propiwa},
we have $X_1X_2 \cdots X_k \subseteq \bar I_1 {\tld{z}} \bar I_1$.
The conclusion is then straightforward from the definitions.
\end{proof}
\begin{rmk}
It is immediate from the lemma that if $\widetilde{\phi_{\tld{z}}} :\bA^k\times T \lrisom \tld{D}^\circ({\tld{z}})$ is the isomorphism from Lemma \ref{open-immersion-1}, then $(\widetilde{\phi_{\tld{z}}})^* x_f(a,t) = f(t)$.
\end{rmk}
We now want to consider the problem of extending $x_f$ from $\tld{D}^\circ({\tld{z}})$ to $\tld{D}({\tld{z}},i)$.
Using the open chart $\widetilde{\psi_{\tld{z}}} :  \bA^k \times T \hookrightarrow \tld{D}({\tld{z}},i)$, which covers $\tld{D}({\tld{z}},i) \setminus \tld{D}^\circ({\tld{z}})$, we see that $x_f$ extends if and only if the rational function $(\widetilde{\psi_{\tld{z}}})^* x_f :  \bA^k\times T \to \bP^1$ is regular.
Therefore we would like to have an explicit formula for $\psi_{\tld{z}}^* x_f$. We first recall the following definition.

\begin{defn}
    We define the \emph{finite} Weyl group action on $\lambda \in X^*(T)$ as follows through generators of $W_a$ (see the paragraph before Definition\autoref{gooddef}). If $s$ is a spherical reflection, then $s\lambda$ is given by the usual reflection action. If $s=t_\alpha s_\alpha$ is an affine reflection with $\alpha$ being the highest root, then $s\lambda$ is defined to be $s_\alpha(\lambda)$.
\end{defn}

\begin{lemma}
\label{function-in-other-chart}
For any $(a,t) \in  \bA^k\times T$ we have 
$$ \widetilde{\psi_{\tld{z}}}^* x_f (a,t) = f\left( a_i^{ s_k^{-1} s_{k-1}^{-1}\dots s_{i+1}^{-1}\bar\alpha_{i}^\vee} t \right) ,$$
where the action of each \(s_{j}^{-1}\) is via the finite Weyl group action. 
\end{lemma}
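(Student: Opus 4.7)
The plan is to reduce the computation to Lemma \ref{function-of-t}, by rewriting the word defining $\widetilde{\psi_x}(a,t)$ so that it takes the form $X_1 X_2 \cdots X_k \cdot t'$ with each $X_j \in \bar I_1 s_j \bar I_1$, and then identifying $t'$ with the desired expression. Since $\widetilde{\psi_x}^* \lambda_f$ is at worst a rational function on $\bA^k \times T$, it suffices to verify the formula on the dense open locus $\{a_i \neq 0\}$ (where the right-hand side is manifestly regular).

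Concretely, the image of $(a_1, \ldots, s_i u_{s_i}(a_i) s_i, \ldots, a_k, t)$ under $\widetilde{\psi_x}$ is represented in $LG$ by
$$Y \;=\; \Bigl(\prod_{j<i} u_{s_j}(a_j) s_j\Bigr) \cdot \bigl(s_i u_{s_i}(a_i) s_i\bigr) \cdot \Bigl(\prod_{j>i} u_{s_j}(a_j) s_j\Bigr) \cdot t.$$
From the $SL_2$-identity $u_s(a) s u_s(a^{-1}) s u_s(a) s = a^{\bar\alpha_s^\vee}$ of Lemma \ref{P1-lemma}(1), a direct rearrangement gives, for $a_i \neq 0$,
$$s_i u_{s_i}(a_i) s_i \;=\; u_{s_i}(a_i^{-1})\, s_i \,\cdot\, a_i^{\bar\alpha_{s_i}^\vee} \,\cdot\, Z_i$$
for some $Z_i \in \bar I_1$ (the unipotent remainder; compare the transition function computed in the proof of Lemma \ref{P1-lemma}(2)). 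Substituting this into $Y$ replaces the central factor by an element of $\bar I_1 s_i \bar I_1$, at the cost of interjecting a ``torus bubble'' $a_i^{\bar\alpha_{s_i}^\vee} Z_i$ to its right.

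The next step is to push this bubble past each of the remaining factors $u_{s_{i+1}}(a_{i+1}) s_{i+1}, \ldots, u_{s_k}(a_k) s_k$, using two elementary facts: first, $T$ normalizes $\bar I_1$ (since $\bar I = \bar I_1 \rtimes T$), so conjugation by $a_i^{\bar\alpha_{s_i}^\vee}$ preserves $\bar I_1$; second, $a_i^{\bar\alpha_{s_i}^\vee} \cdot s_j = s_j \cdot a_i^{s_j^{-1} \bar\alpha_{s_i}^\vee}$, by definition of the Weyl-group action on coroots. At each step the $\bar I_1$-valued correction merges with the (torus-conjugate of the) unipotent $u_{s_j}(a_j)$ into a single $\tilde u_j \in \bar I_1$, yielding a factor $\tilde u_j s_j \in \bar I_1 s_j \bar I_1$, while the torus part of the bubble is conjugated by $s_j^{-1}$. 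Iterating for $j=i+1, \ldots, k$ transforms the bubble into the torus element $a_i^{s_k^{-1} s_{k-1}^{-1} \cdots s_{i+1}^{-1} \bar\alpha_{s_i}^\vee}$ now sitting to the right of all the Coxeter factors.

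Assembling: $Y$ has been rewritten as $X_1 X_2 \cdots X_k \cdot t'$ with each $X_j \in \bar I_1 s_j \bar I_1$ and with $t' = a_i^{s_k^{-1} s_{k-1}^{-1} \cdots s_{i+1}^{-1} \bar\alpha_{s_i}^\vee} \cdot t$. Lemma \ref{function-of-t} then gives $\lambda_f(Y) = f(t')$, which is the required identity. The main (purely cosmetic) obstacle is the careful bookkeeping of the successive commutations; this is mechanical once one observes that Lemma \ref{function-of-t} only sees the torus part of the bubble, so all unipotent corrections can be freely absorbed into $\bar I_1$ at each step.
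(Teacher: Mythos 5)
Your argument is correct and follows essentially the same route as the paper: rewrite the central factor $s_i u_{s_i}(a_i) s_i$ via Lemma \ref{P1-lemma} as a product of a $\bar I_1 s_i$-term with a torus element (and a $\bar I_1$-remainder), commute the torus element past the remaining $u_{s_j}(a_j)s_j$ factors while absorbing unipotent corrections into $\bar I_1$, and finally invoke Lemma \ref{function-of-t}. The paper phrases the rewriting step as a comparison of $\widetilde{\psi}_x$ with $\widetilde{\phi}_x$ coordinates, whereas you manipulate the representing word in $LG$ directly, but this is a cosmetic difference.
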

\begin{proof}
Let $(a,t) \in  \bA^k \times T$ and $(a',t') \in  \bA^k \times T$ and assume that $\widetilde{\psi_{\tld{z}}}(a,t) = \widetilde{\phi_{\tld{z}}}(a',t')$.
Spelling it out, this means that for some $X_k \in \bar I_1$ and $X_1,X_3,\dots,X_{k-1} \in \bar I$ we have
$$ \left( u_1(a_1)s_1 ,\dots, s_i u_i(a_i) s_i,\dots,u_k(a_k)s_k t \right) = \left( u_1(a_1') s_1 X_1,\dots,X_{i-1}^{-1}  u_i(a_i')s_i X_{i},\dots,X_{k-1}^{-1}  u_k(a_k') s_k t' X_{k}\right) . $$
(Here we use the simplifying notation $u_i = u_{s_i}$ and $\bar \alpha_i=\bar \alpha_{s_i}$).
Like in the proof of Lemma \ref{open-immersion-1}, we obtain $X_1,X_{2},\dots,X_{i-1}=1$ and $(a_1,a_{2},\dots,a_{i-1}) = (a_1',a_{2}',\dots,a_{i-1}')$.
Now at $i$ we get $$ s_i u_i(a_i) s_i =  u_i(a_i') s_i X_i. $$
By Lemma \ref{P1-lemma} it follows that $a_i' = - a_i^{-1}$ and $X_i =  u_i(-a_i) a_i^{\bar \alpha_{i}^\vee}$.
Hence
\begin{align*}
\widetilde{\psi_{\tld{z}}}(a,t) & = \left(\dots, s_i u_i(a_i) s_i, u_{i+1}(a_{i+1})s_{i+1},  \dots \right) \\
& = \left(\dots,  u_i(-a_i^{-1})s_i u_i(-a_i) a_i^{\bar \alpha_{i}^\vee},  u_{i+1}(a_{i+1})s_{i+1}, \dots \right) \\
& \simeq \left(\dots,  u_i(-a_i^{-1})s_i u_i(-a_i), a_i^{\bar \alpha_{i}^\vee} u_{i+1}(a_{i+1}) s_{i+1}, \dots \right). 
\end{align*}
The crucial computation is now that
$$ a_i^{\bar \alpha_{i}^\vee}  u_{i+1}(a_{i+1})s_{i+1}=  u_{i+1}(a_i^{\langle \bar \alpha_{i+1}, \bar \alpha_{i}^\vee \rangle}a_{i+1})  a_i^{\bar \alpha_{i}^\vee} s_{i+1}= u_{i+1}(a_i^{\langle \bar \alpha_i, \bar \alpha_{i} \rangle}a_{i+1}) s_{i+1} a_i^{ s_{i+1}^{-1}\bar\alpha_{i}^\vee},$$
where we again emphasize that the action of \(s_{i+1}^{-1}\) on \(\bar\alpha_{i}^\vee\) is via the \emph{finite} Weyl group action. 
Arguing inductively, we find that 
$$ \widetilde{\psi}_{\tld{z}}(a,t) \simeq \left(\dots, u_k\left( a_i^{\langle \bar \alpha_k, s_{k-1}^{-1}\dots s_{i+1}^{-1}\bar\alpha_{i}^\vee \rangle} a_k \right) s_k a_i^{ s_k^{-1} s_{k-1}^{-1}\dots s_{i+1}^{-1}\bar\alpha_{i}^\vee} t \right) ,$$
where the $i$'th coordinate for $i>1$ is contained in $\bar I_1 s_i \bar I_1$ (and its precise form is not of importance).
The conclusion now follows from Lemma \ref{function-of-t}.
\end{proof}
Finally, we can specialize the lemma above to the case when $f$ is a character of $T$.
More precisely, given a rational character $\beta : T \to \bG_m \subset \bA^1$ we can view $\beta$ as a regular function on $T$ and consider the resulting function $x_\beta : \tld{D}^\circ({\tld{z}}) \to \bA^1$ as in Definition \ref{function-on-demazure-resolution}.
\begin{prop}
\label{extending-functions-prop}
For any rational character $\beta \in X^*(T)$, the function $x_\beta : \tld{D}^\circ({\tld{z}}) \to \bA^1$ extends to $\tld{D}^\circ({\tld{z}},i)$ if and only if
$\langle \beta, s_k^{-1} s_{k-1}^{-1}\dots s_{i+1}^{-1}\bar\alpha_{i}^\vee \rangle \geq 0$, where the action of each \(s_{j}^{-1}\) is via the \emph{finite} Weyl group action. 
\end{prop}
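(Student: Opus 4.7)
The plan is to directly combine the open cover of $\tld{D}^\circ(x,i)$ by the two charts $\widetilde{\phi}_x$ and $\widetilde{\psi}_x$ with the explicit formula of Lemma \ref{function-in-other-chart}. Since $\widetilde{\phi}_x$ has image equal to $\tld{D}^\circ(x)$ by Lemma \ref{open-immersion-1}, and since $\lambda_\beta$ is defined on all of $\tld{D}^\circ(x)$ by construction, it is automatic that $\lambda_\beta$ extends over the image of $\widetilde{\phi}_x$. Therefore $\lambda_\beta$ extends to $\tld{D}^\circ(x,i)$ if and only if its pullback $\widetilde{\psi}_x^* \lambda_\beta$ extends to a regular function on the source $\bA^k \times T$ of $\widetilde{\psi}_x$.

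First I would apply Lemma \ref{function-in-other-chart} with $f = \beta$ to obtain
\[
\widetilde{\psi}_x^* \lambda_\beta (a,t) \;=\; \beta\!\left( a_i^{\, s_k^{-1} s_{k-1}^{-1} \cdots s_{i+1}^{-1} \bar\alpha_i^\vee} \, t \right).
\]
Then I would use the fact that $\beta$ is a character: for any cocharacter $\mu \in X_*(T)$ and scalar $a$, one has $\beta(a^\mu) = a^{\langle \beta, \mu\rangle}$. Setting $\mu = s_k^{-1} s_{k-1}^{-1} \cdots s_{i+1}^{-1} \bar\alpha_i^\vee$ gives
\[
\widetilde{\psi}_x^* \lambda_\beta (a,t) \;=\; a_i^{\, \langle \beta,\, s_k^{-1} s_{k-1}^{-1} \cdots s_{i+1}^{-1} \bar\alpha_i^\vee \rangle} \cdot \beta(t),
\]
viewed as a rational function on $\bA^k \times T$.

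Finally, I would observe that the product coordinate ring $\cO(\bA^k \times T) = \F[a_1,\dots,a_k] \otimes_{\F} \cO(T)$ is factorial and $\beta(t) \in \cO(T)^\times$ is a unit, so the above expression is a regular function if and only if the exponent on $a_i$ is nonnegative. This gives both directions of the equivalence: if the pairing is $\geq 0$, the formula directly defines a regular extension; if it is $< 0$, then $\widetilde{\psi}_x^* \lambda_\beta$ has a genuine pole along the divisor $\{a_i = 0\}$, which corresponds to a nonempty locus in the image of $\widetilde{\psi}_x$ that is disjoint from $\tld{D}^\circ(x)$, so no regular extension can exist. I do not anticipate any serious obstacle: the entire content is packaged in Lemma \ref{function-in-other-chart}, and the remaining argument is a direct reading of its formula.
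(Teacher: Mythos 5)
Your proposal is correct and follows the same route as the paper: cover $\tld{D}^\circ(x,i)$ by $\widetilde{\phi}_x$ and $\widetilde{\psi}_x$, reduce to regularity of $\widetilde{\psi}_x^*\lambda_\beta$ on $\bA^k \times T$, and read off the answer from the exponent of $a_i$ using Lemma \ref{function-in-other-chart}. The extra remark about factoriality and $\beta(t)$ being a unit is just a slightly more explicit version of what the paper leaves as "immediate".
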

\begin{proof}
Consider the open chart $\widetilde{\psi}_{\tld{z}} :  \bA^k\times T \hookrightarrow \tld{D}^\circ({\tld{z}},i)$, which covers the complement of $\tld{D}^\circ({\tld{z}})$ in $\tld{D}^\circ({\tld{z}},i)$.
Then $x_\beta$ extends to $\tld{D}^\circ({\tld{z}},i)$ if and only if the rational function $(\widetilde{\psi}_{\tld{z}})^* x_\beta : T \times \bA^k \to \bP^1$ is regular.
By Lemma \ref{function-in-other-chart}, we have
$$ (\widetilde{\psi}_{\tld{z}})^* x_\beta (a,t) = \beta\left(a_i^{ s_k^{-1} s_{k-1}^{-1}\dots s_{i+1}^{-1}\bar\alpha_{i}^\vee} t \right) =  a_i^{ \langle \beta , s_k^{-1} s_{k-1}^{-1}\dots s_{i+1}^{-1}\bar\alpha_{i}^\vee \rangle} \beta(t) . $$
The result is now immediate.
\end{proof}

We will now consider the problem of extending specific functions which will be relevant in our application in Section \ref{section:step3}.
\begin{thm} \label{important}
Assume that ${\tld{z}} C_0$ is antidominant, or more generally that ${\tld{z}} C_0 \uparrow C_0$ in Jantzen's ordering \cite[Section II.6.5]{jantzen}.
Then for all $j \in \lbrace 1,\dots,n \rbrace$, the function
\begin{align*}
x_j : \bar{I} {\tld{z}} \bar{I}_1 / \bar{I}_1 & \to \bA^1 \\
A {\tld{z}} \bar{I}_1 & \mapsto \text{upper left }(j \times j)\text{-minor of }A \mod v
\end{align*}
extends to a regular function on the closure $\overline{\bar{I} {\tld{z}} \bar{I}_1 / \bar{I}_1} \subset L \GL_n / \bar{I}_1$.
\end{thm}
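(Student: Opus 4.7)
The plan is to invoke Lemma \ref{extend} by verifying, for a reduced expression $x = s_1 \cdots s_k$ and every $i \in \{1,\dots,k\}$, the extendability criterion of Proposition \ref{extending-functions-prop}. Let $\bar{s}_j \in W$ denote the spherical part of $s_j$ and set $w := \bar{s}_1 \bar{s}_2 \cdots \bar{s}_k$, the spherical part of $x$. First I would identify the pullback of $\lambda_j$ to $\widetilde{D}^\circ(x)$ with $\lambda_\beta$ in the sense of Definition \ref{function-on-demazure-resolution}, for an explicit character $\beta$. Writing a general element of $\widetilde{S}^\circ(x)$ as $A \cdot x \cdot \tau$ modulo $\bar{I}_1$ with $A \in \bar{I}_1$ and $\tau \in T$ (via the Iwahori decomposition $\bar{I} = \bar{I}_1 \cdot T$), and using that $\Ad_x|_T = \Ad_w$, the corresponding $\bar{I}$-representative of the coset is $A \cdot \Ad_w(\tau)$, whose reduction modulo $v$ is lower triangular unipotent times a diagonal. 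Its upper-left $(j \times j)$-minor therefore equals $\prod_{l=1}^{j} \tau_{w^{-1}(l)} = (w^{-1} \beta_j)(\tau)$, where $\beta_j := \cE_1 + \cdots + \cE_j$ is the $j$-th fundamental weight. Hence $\lambda_j = \lambda_{w^{-1} \beta_j}$.

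Next I would feed $\beta = w^{-1} \beta_j$ into the criterion of Proposition \ref{extending-functions-prop}. Using $W$-invariance of the natural pairing together with the telescoping identity
\[ w \cdot \bar{s}_k \bar{s}_{k-1} \cdots \bar{s}_{i+1} = (\bar{s}_1 \cdots \bar{s}_k)(\bar{s}_k \cdots \bar{s}_{i+1}) = \bar{s}_1 \bar{s}_2 \cdots \bar{s}_i, \]
the criterion simplifies to $\langle \beta_j, \bar{s}_1 \bar{s}_2 \cdots \bar{s}_i \bar\alpha_i^\vee \rangle \geq 0$. Because the $\beta_j$ are precisely the fundamental weights of $\GL_n$, demanding this for all $j$ is equivalent to the single combinatorial claim that $\gamma_i := \bar{s}_1 \bar{s}_2 \cdots \bar{s}_i \bar\alpha_i$ is a positive root for every $i$.

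To establish this positivity, I would first check case-by-case that $\bar{s}_i \bar\alpha_i = -\bar\alpha_i$ in both the spherical and the affine cases. This rewrites $\gamma_i = -\bar{s}_1 \cdots \bar{s}_{i-1} \bar\alpha_i$ as the linear part of the affine root $\phi_i \in N(x^{-1})$ produced by the standard enumeration of the inversion set of $x^{-1}$ along the reduced expression. The claim then reduces to: for every $\phi = (\alpha, n) \in N(x^{-1})$ the linear part $\alpha$ is a positive finite root. This follows by a short sign analysis: positivity of $\phi$ on $A_0$ forces $\langle \alpha, y \rangle > n$ for all $y \in A_0$, while negativity on $xA_0$ forces $\langle \alpha, y \rangle < n$ for all $y \in xA_0$; if $\alpha$ were a negative finite root, the antidominance of $xA_0$ (or more generally the Jantzen hypothesis $xC_0 \uparrow C_0$) would yield $\langle \alpha, \cdot \rangle > 0$ on $xA_0$ and $\langle \alpha, \cdot \rangle < 0$ on $A_0$, forcing both $n < 0$ and $n > 0$.

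The main obstacle is this last combinatorial step: navigating the sign conventions of the appendix (the opposite Iwahori $\bar{I}$, and the definition of $\bar\alpha_s$, which has the opposite sign of the linear part of the simple positive affine root attached to $s$). Once the key identity $\bar{s}_i \bar\alpha_i = -\bar\alpha_i$ is in hand, the remainder is the short convexity check described above, and the generalization to $xC_0 \uparrow C_0$ follows by applying the same wall-by-wall sign analysis along a reduced gallery from $A_0$ to $xA_0$.
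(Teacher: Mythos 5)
Your proposal is correct and follows the same high-level architecture as the paper's proof: reduce to Lemma~\ref{extend}, identify the pullback of $\lambda_j$ with $\lambda_\beta$ for $\beta = w^{-1}\beta_j$ (this is exactly the paper's $x_0^{-1}\omega_j$), and verify the criterion of Proposition~\ref{extending-functions-prop} for each $i$. Where you diverge is in the final combinatorial verification. Your $W$-invariance plus telescoping identity immediately reduces the criterion to the clean statement that $\gamma_i^\vee = \bar{s}_1\cdots\bar{s}_i\bar\alpha_i^\vee$ is a positive coroot, and you verify this by recognizing $\gamma_i^\vee$ as (the coroot of) the linear part of the $i$-th inversion $\phi_i = s_1\cdots s_{i-1}(a_i) \in N(x^{-1})$, then running a sign/convexity analysis on alcoves: positivity of $\phi_i$ on $A_0$ together with negativity on $xA_0$ is incompatible with the linear part being a negative root when $xA_0$ lies below $A_0$. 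The paper instead identifies this same positive coroot with the difference $s_1\cdots s_{i-1}(v_i) - s_1\cdots s_i(v_i)$ of the images of the vertex of $A_0$ opposite the $i$-th wall, and appeals to the downward direction of the alcove walk. Both routes encode the identical geometric fact about the reduced gallery from $A_0$ to $xA_0$, so neither is strictly more general, but your derivation of the criterion is streamlined (it sidesteps the paper's translation-by-$\eta$ reformulation and the vertex bookkeeping) and the inversion-set formulation is arguably more standard. Two small precision points: you say ``$\gamma_i$ is a positive root'' but the object paired with $\beta_j$ is the coroot $\gamma_i^\vee$; and your sign analysis has a sign slip in the inequality direction for affine roots on $A_0$ (you acknowledge this), though the conclusion $n < 0$ and $n > 0$ contradiction survives once the conventions are fixed.
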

\begin{proof}
By Lemma \ref{extend} , $x_j$ extends to a regular function on $S({\tld{z}})$ if $x_j$ extends across $D({\tld{z}},i)$ for all $i\in \{1,...,l({\tld{z}})\}$.
This is what we will show.

Write ${\tld{z}} = t_\eta z$, where $\eta \in \Phi^\vee$ and $z \in W$.
Recall the character \(\omega_{j} : (t_{1},\dots,t_{n}) \mapsto t_{1}t_{2}\cdots t_{j}\) of \(T\). 
Note that for $(t,A') \in T \times \bar I_1$, we have $A' {\tld{z}} t \bar{I}_1 = A' \Ad_{z}(t) {\tld{z}} \bar{I}_1$, which implies that $x_j(A' {\tld{z}} t \bar{I}_1) = \omega_j(\Ad_{z} t) = (z^{-1} \omega_j)(t)$, and therefore $$x_j = x_{z^{-1} \omega_j}.$$
By Proposition \ref{extending-functions-prop}, it follows that $x_j$, as a function on $\tld{D}^\circ({\tld{z}})$, extends to a regular function on $\tld{D}^\circ({\tld{z}},i)$ if and only if $$0 \leq \langle z^{-1} \omega_j, s_{k}^{-1} s_{k-1}^{-1} \cdots s_{i+1}^{-1}(\bar \alpha_i^\vee) \rangle = \langle \omega_j , s_1 \cdots s_i(\bar \alpha_i^\vee) \rangle . $$ 
In turn, this holds for all $j$ if and only if
\begin{equation}
\label{eq:not-so-mysterious-condition}
s_1 s_2 \cdots s_i(\bar \alpha_i^\vee) \in \bR_{\geq 0} \cdot \Phi^{\vee,+} \,\,\,\,\,\,\, \text{ for all }i \in \lbrace 1,\dots,k\rbrace .
\end{equation}
We again emphasize that the action of each \(s_{\bullet}\) here is via the \emph{finite} Weyl group action. 
In contrast, we will in the remainder of this proof denote the action of \(s \in W_{a}\) on a point \(y \in X_{*}(T)\) via the \emph{affine} Weyl group action by \(s \cdot_a y\).

We now turn to the proof of \eqref{eq:not-so-mysterious-condition}.
For each simple reflection $s$, let $v_s$ be the vertex of the dominant base alcove opposite to the wall which $s$ reflect across.
Then by construction, we have $s \cdot_a v_s - v_s = \bar \alpha_s^\vee$ for all simple reflections, or equivalently $v_s - s \cdot_a v_s = s(s \cdot_a v_{s} - v_{s}) = s(\bar \alpha_s^\vee)$ by applying the affine transformation $s$.
In particular, using this relation for $s = s_i$ and applying $s_1\cdots s_{i-1}$, we get
\begin{align*}
s_1  \cdots s_{i-1} \cdot_a v_{s_i} - s_1 \cdots s_{i-1} \cdot_{a} s_{i} \cdot_a v_{s_i} & = s_{1}\cdots s_{i-1}\left( v_{s_i} - s_{i} \cdot_a v_{s_i}\right) =  s_1 \cdots s_{i-1} s_{i}(\bar \alpha_i^\vee) .
\end{align*}
Note that we have used the affine Weyl group action on the left and the finite Weyl group action on the right hand side of this formula. 
It is clear that \(s_{1}  \cdots  s_{i-1} \cdot_a v_{s_i} - s_{1} \cdots s_{i} \cdot_a v_{s_i} \in \Phi^{\vee,+} \cup \lbrace 0 \rbrace\)
because the factorization \(s_{1}s_{2} \cdots s_{k} = {\tld{z}}\) corresponds to an alcove walk in the downwards direction. 
We conclude \eqref{eq:not-so-mysterious-condition}, so the function \(x_{j}\) extends for each \(j\) as claimed. 
\end{proof}

To be able to apply this theorem in Section \ref{section:step3}, we need to translate back our spaces to the set-up we used in there. To do this, define the involution
$$ *:LG\to LG$$
which is given by transposition of matrices. This induces an isomorphism
$$ LG/\bar I\cong I\setminus LG$$
and induces the $*$-involution on $\tld{W}$ introduced in Section \ref{presirred}.

\begin{cor}
\label{extending-minors-in-general}
Assume that ${\tld{z}}$ is as in Theorem \ref{important}.
Then the function
\begin{align*}
x_j : I_1 \backslash I_1 {\tld{z}}^* I & \to \bA^1 \\
I_1 {\tld{z}}^* A & \mapsto \text{upper left }(j \times j)\text{-minor of }A \mod v 
\end{align*}
extends to the Schubert variety $\overline{I_1 \backslash I_1 {\tld{z}}^* I} \subset \tld{\Fl}$.
\end{cor}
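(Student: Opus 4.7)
The plan is to deduce this corollary directly from Theorem \ref{important} via the matrix transposition anti-involution of the loop group. Concretely, let $\tau : L\GL_n \to L\GL_n$ denote the morphism of ind-schemes sending $g \mapsto g^T$. This is not a group homomorphism but an anti-homomorphism, which is exactly what is needed to intertwine left and right multiplication; it therefore induces an isomorphism of ind-schemes
\[
\tau : I_1 \backslash L\GL_n \lrisom L\GL_n / \bar I_1.
\]
Under $\tau$ one has $I_1 \leftrightarrow \bar I_1$ (unipotent upper-triangular mod $v$ becomes unipotent lower-triangular mod $v$), $I \leftrightarrow \bar I$, and $x^* \leftrightarrow (x^*)^T = x$, the last identity being exactly the definition of the $*$-involution on $\tld W$ recalled at the end of Section \ref{presirred}. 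Consequently $\tau$ restricts to an isomorphism
\[
I_1 \backslash I_1 x^* I \lrisom \bar I x \bar I_1 / \bar I_1, \qquad I_1 x^* A \longmapsto A^T x \bar I_1,
\]
which extends to an isomorphism of Zariski closures in the respective affine flag varieties, since $\tau$ is an automorphism of the ambient loop group.

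Next I would check that the two minor functions are compatible under this isomorphism. For any square matrix $M$ the upper-left $(j\times j)$-minor is invariant under transposition, because the upper-left $j\times j$ block of $A^T$ is the transpose of the upper-left $j \times j$ block of $A$, and $\det(N) = \det(N^T)$. Thus $\lambda_j \circ \tau = \lambda_j^*$ on $I_1 \backslash I_1 x^* I$. Applying Theorem \ref{important}, the function $\lambda_j$ extends to the closure of $\bar I x \bar I_1 / \bar I_1$; pulling this extension back along $\tau$ (extended to closures) yields the desired extension of $\lambda_j^*$ to the Schubert variety $\overline{I_1 \backslash I_1 x^* I} \subset \tld{\Fl}$.

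The argument itself is essentially transparent, so I do not expect any serious obstacle — the only thing to be careful about is bookkeeping the passage between the left-quotient and right-quotient conventions, and verifying that $\tau$ sends the relevant subgroups and the Schubert stratum for $x^*$ to the subgroups and Schubert stratum for $x$ used in the appendix. Once that dictionary is in place, the corollary is immediate from Theorem \ref{important}.
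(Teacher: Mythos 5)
Your proposal is correct and follows essentially the same route as the paper: transpose to pass from $I_1 \backslash I_1 x^* I$ to $\bar I x \bar I_1 / \bar I_1$, note that the upper-left $j \times j$ minor is unchanged under transposition so $\lambda_j \circ \tau = \lambda_j^*$, extend $\lambda_j$ via Theorem \ref{important}, and pull back. The paper's proof is the same argument packaged as a commutative diagram using the $*$-involution (which is your $\tau$).
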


\begin{proof}
The $*$-involution induces isomorphisms fitting in the following commutative diagram
\[\begin{tikzcd}
	{I_1 \backslash I_1 {\tld{z}}^* I} & {\overline{I_1 \backslash I_1 {\tld{z}}^* I}} \\
	{\bar{I} {\tld{z}} \bar{I}_1 / \bar{I}_1} & {\overline{\bar{I} {\tld{z}} \bar{I}_1 / \bar{I}_1}.}
	\arrow["\cong"', from=1-1, to=2-1]
	\arrow["\cong"', from=1-2, to=2-2]
	\arrow[hook, from=1-1, to=1-2]
	\arrow[hook, from=2-1, to=2-2]
\end{tikzcd}\]
We can extend this commutative diagram
\[\begin{tikzcd}
	{I_1 \backslash I_1 {\tld{z}}^* I} & {\overline{I_1 \backslash I_1 {\tld{z}}^* I}} \\
	{\bar{I} {\tld{z}} \bar{I}_1 / \bar{I}_1} & {\overline{\bar{I} {\tld{z}} \bar{I}_1 / \bar{I}_1}} \\
	&& {\bA^1}
	\arrow["\cong"', from=1-1, to=2-1]
	\arrow["\cong"', from=1-2, to=2-2]
	\arrow[hook, from=1-1, to=1-2]
	\arrow[hook, from=2-1, to=2-2]
	\arrow["{x_j}"', from=2-1, to=3-3]
	\arrow["{\overline{x_j}}", from=2-2, to=3-3]
	\arrow[curve={height=-18pt}, dashed, from=1-2, to=3-3]
\end{tikzcd}\]
If we let \(x_{j}^*\) denote the function from Theorem \ref{important} to distinguish it from we now call \(x_{j}\), 
note that $x_j=x_j^*\circ *$, i.e. taking the upper left $(j\times j)$-minor is invariant under transposition. Therefore, if we can find a morphism for the dashed arrow, extending the commutative diagram, we are done. But by Theorem \ref{important}, we can extend the morphism $x_j^*$ to $\overline{x_j}$. Therefore we can define $\overline{x_j}\circ *:\overline{I_1 \backslash I_1 {\tld{z}}^* I}\to \bA^1 $ and this is desired morphism. 

\end{proof}
\end{appendices}

\newpage
\bibliography{Biblio}

@misc{hecke,
	author = {Lee, Heejong},
	howpublished = {\url{https://arxiv.org/abs/2402.14011}},
	
	title = {Spectral mod {$p$} Satake isomorphism for {$\mathrm{GL}_n$}},
	year = {2024}}

@article{pappas-rapoport,
	author = {Pappas, G. and Rapoport, M.},
	doi = {10.1016/j.aim.2008.04.006},
	fjournal = {Advances in Mathematics},
	issn = {0001-8708},
	journal = {Adv. Math.},
	mrclass = {22E67 (14M15 17B67 20G25)},
	mrnumber = {2435422},
	mrreviewer = {Dmitry A. Timash\"{e}v},
	note = {With an appendix by T. Haines and Rapoport},
	number = {1},
	pages = {118--198},
	title = {Twisted loop groups and their affine flag varieties},
	url = {https://doi-org.turing.library.northwestern.edu/10.1016/j.aim.2008.04.006},
	volume = {219},
	year = {2008},
	Bdsk-Url-1 = {https://doi-org.turing.library.northwestern.edu/10.1016/j.aim.2008.04.006},
	Bdsk-Url-2 = {https://doi.org/10.1016/j.aim.2008.04.006}}

@article{iwahori1965some,
	author = {Iwahori, Nagayoshi and Matsumoto, Hideya},
	journal = {Publications Math{\'e}matiques de l'IH{\'E}S},
	pages = {5--48},
	title = {On some {B}ruhat decomposition and the structure of the Hecke rings of $ p $-adic Chevalley groups},
	volume = {25},
	year = {1965}}

@misc{localmodels,
	abstract = {We construct projective varieties in mixed characteristic whose singularities model, in generic cases, those of tamely potentially crystalline Galois deformation rings for unramified extensions of \$\$\textbackslash mathbb \{Q\}\_p\$\$with small regular Hodge--Tate weights. We establish several significant facts about their geometry including a unibranch property at special points and a representation theoretic description of the irreducible components of their special fibers. We derive from these geometric results a number of local and global consequences: the Breuil--M{\'e}zard conjecture in arbitrary dimension for tamely potentially crystalline deformation rings with small Hodge--Tate weights (with appropriate genericity conditions), the weight part of Serre's conjecture for U(n) as formulated by Herzig (for global Galois representations which satisfy the Taylor--Wiles hypotheses and are sufficiently generic at p), and an unconditional formulation of the weight part of Serre's conjecture for wildly ramified representations.},
	author = {Le, Daniel and Hung, Bao V. Le and Levin, Brandon and Morra, Stefano},
	date = {2023-03-01},
	doi = {10.1007/s00222-022-01163-4},
	issn = {1432-1297},
	journal = {Inventiones mathematicae},
	number = {3},
	pages = {1277--1488},
	title = {Local Models for {{Galois}} Deformation Rings and Applications},
	url = {https://doi.org/10.1007/s00222-022-01163-4},
	volume = {231},
	Bdsk-Url-1 = {https://doi.org/10.1007/s00222-022-01163-4}}

@misc{stacks-project,
	author = {The {Stacks Project Authors}},
	howpublished = {\url{https://stacks.math.columbia.edu}},
	shorthand = {Stacks},
	title = {\textit{Stacks Project}},
	year = {2018}}

@article{herzig-satake,
	author = {Herzig, Florian},
	doi = {10.1112/S0010437X10004951},
	fjournal = {Compositio Mathematica},
	issn = {0010-437X},
	journal = {Compos. Math.},
	mrclass = {22E50 (20C08 20G25)},
	mrnumber = {2771132},
	mrreviewer = {Michael M. Schein},
	number = {1},
	pages = {263--283},
	title = {A {S}atake isomorphism in characteristic {$p$}},
	url = {https://doi-org.turing.library.northwestern.edu/10.1112/S0010437X10004951},
	volume = {147},
	year = {2011},
	Bdsk-Url-1 = {https://doi-org.turing.library.northwestern.edu/10.1112/S0010437X10004951},
	Bdsk-Url-2 = {https://doi.org/10.1112/S0010437X10004951}}

@book{emerton-gee,
	abstract = {"Motivated by the p-adic Langlands program, this book constructs stacks that algebraize Mazur's formal deformation rings of local Galois representations. More precisely, it constructs Noetherian formal algebraic stacks over Spf Zp that parameterize {\'e}tale ([phi], [Gamma])-modules; the formal completions of these stacks at points in their special fibres recover the universal deformation rings of local Galois representations. Matthew Emerton and Toby Gee use these stacks to show that all mod p representations of the absolute Galois group of a p-adic local field lift to characteristic zero, and indeed admit crystalline lifts. They explicitly describe the irreducible components of the underlying reduced substacks and discuss the relationship between the geometry of these stacks and the Breuil-M{\'e}zard conjecture. Along the way, they prove a number of foundational results in p-adic Hodge theory that may be of independent interest"--},
	author = {Emerton, Matthew and Gee, Toby},
	isbn = {978-0-691-24134-0 978-0-691-24135-7},
	keywords = {Algebraic stacks, Geometry, Algebraic, {MATHEMATICS} / Geometry / Algebraic, {MATHEMATICS} / Reference, Moduli theory},
	location = {Princeton},
	number = {215},
	publisher = {Princeton University Press},
	series = {Annals of mathematics studies},
	title = {Moduli stacks of \'etale {($\varphi$, $\Gamma$)}-modules and the existence of crystalline lifts},
	year = {2023}}

@article{gee-herzig-savitt,
	author = {Gee, Toby and Herzig, Florian and Savitt, David},
	doi = {10.4171/JEMS/826},
	fjournal = {Journal of the European Mathematical Society (JEMS)},
	issn = {1435-9855},
	journal = {J. Eur. Math. Soc. (JEMS)},
	mrclass = {11F80 (11F75)},
	mrnumber = {3871496},
	mrreviewer = {Nguy\cftil{e}n Qu\^{o}c Th\'{a}ng},
	number = {12},
	pages = {2859--2949},
	title = {General {S}erre weight conjectures},
	url = {https://doi-org.turing.library.northwestern.edu/10.4171/JEMS/826},
	volume = {20},
	year = {2018},
	Bdsk-Url-1 = {https://doi-org.turing.library.northwestern.edu/10.4171/JEMS/826},
	Bdsk-Url-2 = {https://doi.org/10.4171/JEMS/826}}

@article{herzig09,
	author = {Herzig, Florian},
	doi = {10.1215/00127094-2009-036},
	fjournal = {Duke Mathematical Journal},
	issn = {0012-7094},
	journal = {Duke Math. J.},
	mrclass = {11F80 (11F75 20C33)},
	mrnumber = {2541127},
	mrreviewer = {\Dbar \cftil{o} Ng\d{o}c Di\cfudot{e}p},
	number = {1},
	pages = {37--116},
	title = {The weight in a {S}erre-type conjecture for tame {$n$}-dimensional {G}alois representations},
	url = {https://doi-org.turing.library.northwestern.edu/10.1215/00127094-2009-036},
	volume = {149},
	year = {2009},
	Bdsk-Url-1 = {https://doi-org.turing.library.northwestern.edu/10.1215/00127094-2009-036},
	Bdsk-Url-2 = {https://doi.org/10.1215/00127094-2009-036}}

@book{jantzen,
	author = {Jantzen, Jens Carsten},
	edition = {Second},
	isbn = {0-8218-3527-0},
	mrclass = {20G05 (17B10)},
	mrnumber = {2015057},
	pages = {xiv+576},
	publisher = {American Mathematical Society, Providence, RI},
	series = {Mathematical Surveys and Monographs},
	title = {Representations of algebraic groups},
	volume = {107},
	year = {2003}}

@article{6author,
	author = {Caraiani, Ana and Emerton, Matthew and Gee, Toby and Geraghty, David and Pa\v{s}k\={u}nas, Vytautas and Shin, Sug Woo},
	doi = {10.4310/CJM.2016.v4.n2.a2},
	fjournal = {Cambridge Journal of Mathematics},
	issn = {2168-0930},
	journal = {Camb. J. Math.},
	mrclass = {11S37 (22E50)},
	mrnumber = {3529394},
	mrreviewer = {Kimball L. Martin},
	number = {2},
	pages = {197--287},
	title = {Patching and the {$p$}-adic local {L}anglands correspondence},
	url = {https://doi-org.turing.library.northwestern.edu/10.4310/CJM.2016.v4.n2.a2},
	volume = {4},
	year = {2016},
	Bdsk-Url-1 = {https://doi-org.turing.library.northwestern.edu/10.4310/CJM.2016.v4.n2.a2},
	Bdsk-Url-2 = {https://doi.org/10.4310/CJM.2016.v4.n2.a2}}

@misc{emerton-gee-hellmann,
	author = {Emerton, Matthew and Gee, Toby and Hellmann, Eugen},
	copyright = {arXiv.org perpetual, non-exclusive license},
	doi = {10.48550/ARXIV.2210.01404},
	keywords = {Number Theory (math.NT), Representation Theory (math.RT), FOS: Mathematics, FOS: Mathematics},
	publisher = {arXiv},
	title = {An introduction to the categorical p-adic Langlands program},
	url = {https://arxiv.org/abs/2210.01404v2},
	year = {2022},
	Bdsk-Url-1 = {https://arxiv.org/abs/2210.01404v2},
	Bdsk-Url-2 = {https://doi.org/10.48550/ARXIV.2210.01404}}

@book{kaletha-prasad,
	author = {Kaletha, Tasho and Prasad, Gopal},
	isbn = {978-1-108-83196-3},
	mrclass = {20E42 (11F70 20G25 22E50)},
	mrnumber = {4520154},
	pages = {xxx+718},
	publisher = {Cambridge University Press, Cambridge},
	series = {New Mathematical Monographs},
	title = {Bruhat-{T}its theory---a new approach},
	volume = {44},
	year = {2023}}

@article{faltings-loop-groups,
	author = {Faltings, Gerd},
	doi = {10.1007/s10097-002-0045-x},
	fjournal = {Journal of the European Mathematical Society (JEMS)},
	issn = {1435-9855},
	journal = {J. Eur. Math. Soc. (JEMS)},
	mrclass = {14D20 (14M15 17B67 22E67)},
	mrnumber = {1961134},
	mrreviewer = {Harry Tamvakis},
	number = {1},
	pages = {41--68},
	title = {Algebraic loop groups and moduli spaces of bundles},
	url = {https://doi-org.turing.library.northwestern.edu/10.1007/s10097-002-0045-x},
	volume = {5},
	year = {2003},
	Bdsk-Url-1 = {https://doi-org.turing.library.northwestern.edu/10.1007/s10097-002-0045-x},
	Bdsk-Url-2 = {https://doi.org/10.1007/s10097-002-0045-x}}

@misc{alvarez,
	author = {Boixeda Alvarez, Pablo},
	copyright = {arXiv.org perpetual, non-exclusive license},
	doi = {10.48550/ARXIV.1910.04780},
	keywords = {Algebraic Geometry (math.AG), Representation Theory (math.RT), FOS: Mathematics, FOS: Mathematics},
	publisher = {arXiv},
	title = {Fixed points and components of equivalued affine Springer fibers},
	url = {https://arxiv.org/abs/1910.04780},
	year = {2019},
	Bdsk-Url-1 = {https://arxiv.org/abs/1910.04780},
	Bdsk-Url-2 = {https://doi.org/10.48550/ARXIV.1910.04780}}

@misc{fontaine-laffaille,
	archiveprefix = {arXiv},
	author = {Daniel Le and Bao Viet Le Hung and Stefano Morra and Chol Park and Zicheng Qian},
	eprint = {2109.02720},
	primaryclass = {math.NT},
	title = {Moduli of Fontaine--Laffaille representations and a mod-$p$ local-global compatibility result},
	year = {2021}}

@misc{propiwa,
      title={The modular pro-$p$ Iwahori-Hecke ${\operatorname{Ext}}$-algebra}, 
      author={Rachel Ollivier and Peter Schneider},
      year={2018},
      eprint={1808.09503},
      archivePrefix={arXiv},
      primaryClass={math.RT}
}
\bibliographystyle{amsalpha}

\end{document}